\tikzset{->-/.style={decoration={
  markings,
  mark=at position 0.55 with {\arrow{>}}},postaction={decorate}}}
\newcommand{\Pio}{\mathring{\Pi}}
\def\tX{{\tilde X}}
\def\tGr{\widetilde \Gr}
\def\sort{{\rm sort}}
\def\R{{\mathbb R}}
\def\M{{\mathcal{M}}}
\def\tM{\tilde M}
\def\Gr{{\rm Gr}}
\def\C{{\mathbb C}}
\def\wt{{\rm wt}}
\def\Z{{\mathbb Z}}
\def\D{{\mathcal{D}}}
\def\A{{\mathcal{A}}}
\def\W{{\mathcal{W}}}
\newcommand{\defn}[1]{{\bf #1}}
\newtheorem{proposition}{Proposition}
\newtheorem{lemma}[proposition]{Lemma}
\newtheorem{theorem}[proposition]{Theorem}
\newtheorem{corollary}[proposition]{Corollary}
\newtheorem{remark}[proposition]{Remark}
\newtheorem{example}[proposition]{Example}
\newtheorem{conj}[proposition]{Conjecture}
\numberwithin{proposition}{section}
\begin{document}
\title{Dimers, webs, and positroids}
\author{Thomas Lam}\address{Department of Mathematics, University of Michigan,
2074 East Hall, 530 Church Street, Ann Arbor, MI 48109-1043, USA}
\email{tfylam@umich.edu}\thanks{T.L. was supported by NSF grant DMS-1160726.}
\begin{abstract}
We study the dimer model for a planar bipartite graph $N$ embedded in a disk, with boundary vertices on the boundary of the disk.  Counting dimer configurations with specified boundary conditions gives a point in the totally nonnegative Grassmannian.  Considering pairing probabilities for the double-dimer model gives rise to Grassmann analogues of Rhoades and Skandera's Temperley-Lieb immanants.  The same problem for the (probably novel) triple-dimer model gives rise to the combinatorics of Kuperberg's webs and Grassmann analogues of Pylyavskyy's web immanants.  This draws a connection between the square move of plabic graphs (or urban renewal of planar bipartite graphs), and Kuperberg's square reduction of webs.  Our results also suggest that canonical-like bases might be applied to the dimer model.

We furthermore show that these functions on the Grassmannian are compatible with restriction to positroid varieties.  Namely, our construction gives bases for the degree two and degree three components of the homogeneous coordinate ring of a positroid variety that are compatible with the cyclic group action.
\end{abstract}
\maketitle

\section{Introduction}

Let $N$ be a (weighted) planar bipartite graph embedded into the disk with $n$ boundary vertices labeled $1,2,\ldots,n$ clockwise along the boundary of the disk.  We study some algebraic aspects of the dimer configurations of $N$.

\begin{center}
\begin{tikzpicture}
\node at (180:2.3) {$1$};
\node at (135:2.3) {$2$};
\node at (90:2.3) {$3$};
\node at (45:2.3) {$4$};
\node at (0:2.3) {$5$};
\node at (-45:2.3) {$6$};
\node at (270:2.3) {$7$};
\node at (225:2.3) {$8$};
\coordinate (a4) at (45:2);
\coordinate (a3) at (90:2);
\coordinate (a2) at (135:2);
\coordinate (a1) at (180:2);
\coordinate (a8) at (225:2);
\coordinate (a7) at (270:2);
\coordinate (a6) at (315:2);
\coordinate (a5) at (0:2);
\coordinate (x11) at (-1,1);
\coordinate (x12) at (0,1);
\coordinate (x13) at (1,1);
\coordinate (x21) at (-1,0);
\coordinate (x22) at (0,0);
\coordinate (x23) at (1,0);
\coordinate (x31) at (-1,-1);
\coordinate (x32) at (0,-1);
\coordinate (x33) at (1,-1);

\draw (a1) -- (x21);
\draw (a2) -- (x11);
\draw (a3) -- (x12);
\draw (a4) -- (x13);
\draw (a5) -- (x23);
\draw (a6) -- (x33);
\draw (a7) -- (x32);
\draw (a8) -- (x31);
\draw (x11) -- (x13);
\draw (x21) -- (x23);
\draw (x31) -- (x33);
\draw (x11) -- (x31);
\draw (x12) -- (x32);
\draw (x13) -- (x33);
\draw (0,0) circle (2cm);
\draw(x11) -- (x12);
\draw (x13) -- (a4);
\draw (x21) -- (x22);
\draw (x31) -- (x32);
\draw (x23) -- (x33);

\filldraw[black] (x11) circle (0.1cm);
\filldraw[black] (x13) circle (0.1cm);
\filldraw[black] (x22) circle (0.1cm);
\filldraw[black] (x31) circle (0.1cm);
\filldraw[black] (x33) circle (0.1cm);

\filldraw[white] (x12) circle (0.1cm);
\draw (x12) circle (0.1cm);
\filldraw[white] (x21) circle (0.1cm);
\draw (x21) circle (0.1cm);
\filldraw[white] (x23) circle (0.1cm);
\draw (x23) circle (0.1cm);
\filldraw[white] (x32) circle (0.1cm);
\draw (x32) circle (0.1cm);
\end{tikzpicture}
\end{center}

\subsection{Dimers and the totally nonnegative Grassmannian}

A \defn{dimer configuration}, or almost perfect matching $\Pi$ of $N$ is a collection of edges in $N$ that uses each interior vertex exactly once, and some subset of the boundary vertices.  The data of the subset of boundary vertices that are used in $\Pi$ gives a boundary subset $I(\Pi) \subset [n]$.  We define a generating function
$$
\Delta_I(N) = \sum_{\Pi: I(\Pi) = I} \wt(\Pi)
$$
where the weight $\wt(\Pi)$ is the product of the weights of edges used in $\Pi$.  These \defn{boundary measurements} satisfy Pl\"ucker relations and gives (see \cite{Kuo, Tal, PSW} and Theorem \ref{thm:matchingplucker}) a point $\tM(N)$ in the affine cone $\tGr(k,n)$ over the Grassmannian of $k$-planes in $n$-space (the value of $k$ depends on $N$):
$$
N \rightsquigarrow \tM(N) = (\Delta_I)_{I \in \binom{[n]}{k}} \in \tGr(k,n).
$$
Indeed, the image $M(N) \in \Gr(k,n)$ of $\tM(N)$ lies in Postnikov's totally nonnegative Grassmannian $\Gr(k,n)_{\geq 0}$ \cite{Pos}, which is defined to be the set of points in the Grassmannian where all Pl\"ucker coordinates take nonnegative values.

\subsection{Double-dimers and Temperley-Lieb immanants}

A \defn{double-dimer configuration} in $N$ is an ordered pair $(\Pi,\Pi')$ of two dimer configurations in $N$.  Overlaying the two dimer configurations gives a collection of doubled edges, cycles of even length, and paths between boundary vertices (see the picture in Section \ref{ssec:doubledimer}), which we call a \defn{Temperley-Lieb subgraph}.  The paths between boundary vertices gives a non-crossing pairing of some subset of the boundary vertices, studied for example by Kenyon and Wilson \cite{KW}.  Let $\A_n = \{(\tau,T)\}$ (notation to be explained in Section \ref{sec:doubledimer}) denote the set of partial non-crossing pairings $(\tau,T)$ on $n$-vertices.  If in addition we fix the boundary subsets of $\Pi$ and $\Pi'$, this analysis gives the identity (Theorem \ref{thm:TL})
\begin{equation}\label{eq:TL}
\Delta_I(N)\Delta_J(N) = \sum_{(\tau,T)} F_{\tau,T}(N)
\end{equation}
where $F_{\tau,T}(N)$ is a \defn{Temperley-Lieb immanant} defined as the weight generating function of Temperley-Lieb subgraphs in $N$ with specified partial non-crossing pairing $(\tau,T)$, and the summation is over certain non-crossing pairings $(\tau,T) \in \A_n$ that are compatible with $(I,J)$.  We show (Proposition \ref{prop:tau}) that $F_{\tau,T}$ are functions on the cone $\tGr(k,n)$ over the Grassmannian: that is, $F_{\tau,T}(N)$ only depends on $\tM(N)$.  The functions $F_{\tau,T}$ are Grassmann analogues of the Temperley-Lieb immanants of Rhoades and Skandera \cite{RS}.

Equation \eqref{eq:TL} leads to some inequalities between minors on $\Gr(k,n)_{\geq 0}$.  For example, we have (Proposition \ref{prop:inequalities})
$$
\Delta_{\sort_1(I,J)}(X) \Delta_{\sort_2(I,J)}(X) \geq
\Delta_I(X) \Delta_J(X)
$$
for $X \in \Gr(k,n)_{\geq 0}$, where $\sort_1,\sort_2$ are defined in Section \ref{sec:logconcave}.  This inequality was also independently discovered by Farber and Postnikov \cite{FaPo}.  Indeed, there is an analogy between these inequalities and the Schur function inequalities of Lam, Postnikov, and Pylyavskyy \cite{LPP}.

\subsection{Triple-dimers and web immanants}

A \defn{triple-dimer configuration} is an ordered triple $(\Pi_1,\Pi_2,\Pi_3)$ of dimer configurations.  Overlaying these dimer configurations on top of each other, we obtain a \defn{weblike} subgraph $G \subset N$ (to be defined in the text) consisting of some tripled edges, some even length cycles that alternate between single and doubled edges, and some components illustrated below (thick edges are present in two out of the three dimer configurations):

\begin{center}
\begin{tikzpicture}[font = \tiny]
\coordinate (b) at (0,0);
\coordinate (w) at (1,0);
\coordinate (b1) at (2,0);
\coordinate (w1) at (3,0);
\coordinate (w2) at (-1,0);
\coordinate (b2) at (-1,-1);
\coordinate (w3) at (0,-1);
\coordinate (b3) at (1,-1);
\coordinate (w4) at (2,-1);
\coordinate (b4) at (3,-1);

\draw (b) -- node[above] {$1$} (w) -- node[above] {$23$} (b1) -- node[above] {$1$}(w1) -- node[right] {$3$} (b4) -- node[below] {$2$} (w4) --node[below] {$3$} (b3) -- node[below] {$2$}(w3) -- node[below] {$13$}(b2) -- node[left] {$2$} (w2) -- node[above] {$3$}(b);

\draw[line width=0.8 mm] (w) -- (b1);
\draw[line width=0.8 mm] (b2) -- (w3);
\draw (b) -- node[near end, left] {$2$} (0,1);
\draw (w2) -- node[near end, right] {$1$}(-1.5,1);
\draw (b3) -- node[near end, left] {$1$}(1,-2);
\draw (w4) -- node[near end, left] {$1$}(2,-2);
\draw (b4) --node[near end, below] {$1$}(4,-1.5);
\draw (w1) -- node[near end, above] {$2$}(4,0.5);

\filldraw [white] (w) circle (0.1cm);
\filldraw [black] (b) circle (0.1cm);
\draw (w) circle (0.1cm);
\filldraw [white] (w1) circle (0.1cm);
\filldraw [black] (b1) circle (0.1cm);
\draw (w1) circle (0.1cm);
\filldraw [white] (w2) circle (0.1cm);
\filldraw [black] (b2) circle (0.1cm);
\draw (w2) circle (0.1cm);
\filldraw [white] (w3) circle (0.1cm);
\filldraw [black] (b3) circle (0.1cm);
\draw (w3) circle (0.1cm);
\filldraw [white] (w4) circle (0.1cm);
\filldraw [black] (b4) circle (0.1cm);
\draw (w4) circle (0.1cm);

\draw[->] (4,-0.5) -- (5,-0.5);

\begin{scope}[{shift={(6.5,0)}}]
\coordinate (b) at (0,0);
\coordinate (w) at (1,0);
\coordinate (b1) at (2,0);
\coordinate (w1) at (3,0);
\coordinate (w2) at (-1,0);
\coordinate (b2) at (-1,-1);
\coordinate (w3) at (0,-1);
\coordinate (b3) at (1,-1);
\coordinate (w4) at (2,-1);
\coordinate (b4) at (3,-1);

\draw (b) -- (w) -- (b1) -- (w1) -- (b4) -- (w4) -- (b3) -- (w3) -- (b2) -- (w2) -- (b);

\draw (b) -- (0,0.5);
\draw (w2) -- (-1.5,0.5);
\draw (b3) -- (1,-1.5);
\draw (w4) -- (2,-1.5);
\draw (b4) -- (3.5,-1.5);
\draw (w1) -- (3.5,0.5);

\filldraw [black] (b) circle (0.1cm);
\filldraw [white] (w1) circle (0.1cm);
\draw (w1) circle (0.1cm);
\filldraw [white] (w2) circle (0.1cm);
\draw (w2) circle (0.1cm);
\filldraw [black] (b3) circle (0.1cm);
\filldraw [white] (w4) circle (0.1cm);
\filldraw [black] (b4) circle (0.1cm);
\draw (w4) circle (0.1cm);
\end{scope}
\draw[->] (10.5,-0.5) -- (11.5,-0.5);
\begin{scope}[{shift={(13.5,0.4)}}]
\coordinate (b) at (0,0);
\coordinate (w) at (1,0);
\coordinate (b1) at (1.5,-0.866);
\coordinate (w1) at (1,-1.73);

\coordinate (b2) at (0,-1.73);
\coordinate (w2) at (-0.5,-0.866);

\draw[->-] (b) -- (w);
\draw[->-] (b1) -- (w);
\draw[->-] (b2) -- (w2); 
\draw[->-] (b) -- (w2);
\draw[->-] (b1) -- (w1);
\draw[->-] (b2) -- (w1);

\begin{scope}[{shift={(b)}}]
\draw[->-] (0,0) -- (-0.5,0.866);
\end{scope}
\begin{scope}[{shift={(w)}}]
\draw[->-] (0.5,0.866) -- (0,0);
\end{scope}
\begin{scope}[{shift={(b1)}}]
\draw[->-] (0,0) -- (1,0);
\end{scope}
\begin{scope}[{shift={(w1)}}]
\draw[->-] (0.5,-0.866) -- (0,0);
\end{scope}
\begin{scope}[{shift={(b2)}}]
\draw[->-] (0,0) -- (-0.5,-0.866);
\end{scope}
\begin{scope}[{shift={(w2)}}]
\draw[->-] (-1,0) -- (0,0);
\end{scope}

\filldraw [white] (w) circle (0.1cm);
\filldraw [black] (b) circle (0.1cm);
\draw (w) circle (0.1cm);
\filldraw [white] (w1) circle (0.1cm);
\filldraw [black] (b1) circle (0.1cm);
\draw (w1) circle (0.1cm);
\filldraw [white] (w2) circle (0.1cm);
\filldraw [black] (b2) circle (0.1cm);
\draw (w2) circle (0.1cm);
\end{scope}
\end{tikzpicture}
\end{center}

Informally, these components consist of trivalent vertices joined together by paths that alternate between single and doubled edges.  Such a weblike graph gives rise to a web $W$ (shown on the right) in the sense of Kuperberg \cite{Kup}.  Kuperberg's webs have directed edges, and our bipartite webs should be interpreted with all edges directed towards white interior vertices.  Kuperberg gave a reduction algorithm for such graphs, reducing any web to a linear combination of 
\defn{non-elliptic webs}.

The set of non-elliptic webs on $n$ boundary vertices, denoted $\D_n$, should be thought of as the set of possible connections in a triple dimer configuration.  For each $D \in \D_n$ we define a generating function $F_D(N)$, counting weblike subgraphs $G \subset N$, called a \defn{web immanant} and we show that $F_D(N)$ only depends on $\tM(N)$.  In particular, if $N$ and $N'$ are related by certain moves, such as the square move (also called urban renewal), then $F_D(N) = \alpha_{N,N'} F_D(N')$ for a constant $\alpha_{N,N'}$ not depending on $D$.  We also obtain (Theorem \ref{thm:triple}) an identity
$$
\Delta_I \Delta_J \Delta_K = \sum_D a(I,J,K,D) F_D
$$
where $a(I,J,K,D)$ counts the number of ways to ``consistently label" $D$ with $(I,J,K)$.  This is a Grassmann analogue of a result of Pylyavskyy \cite{Pyl}.

\subsection{Boundary, pairing, and web ensembles in planar bipartite graphs}
Given a planar bipartite graph $N$, we may define 
$$
\M(N): = \{I(\Pi)\} = \left\{I \in \binom{[n]}{k} \mid \Delta_I(N) > 0\right\}
$$
to be the collection of boundary subsets $I = I(\Pi)$ that occur in dimer configurations $\Pi$ in $N$.  Similarly, one defines
$$\A(N) := \{(\tau,T) \in \A_n \mid F_{\tau,T}(N) > 0\}$$ 
to be the collection of partial non-crossing pairings $(\tau,T)$ that occur in double-dimers in $N$, and
$$
\D(N)  := \{D \in \D_n \mid F_{D}(N) > 0\}
$$ 
to be the collection of web connections $D$ that occur in triple-dimers in $N$.  It is not obvious (but follows from our results) that knowing $\M(N)$ determines both $\A(N)$ and $\D(N)$.  We propose to call $\M(N)$, $\A(N)$ and $\D(N)$ the \defn{boundary ensemble}, \defn{pairing ensemble}, and \defn{web ensemble} of $N$ respectively.

\subsection{Positroids and bases of homogeneous coordinate rings of positroid varieties}

If $X \in \Gr(k,n)$ the matroid of $X$ is the collection
$$
\M_X := \left\{I \in \binom{[n]}{k} \mid \Delta_I(X) \neq 0\right\}
$$
of $k$-element subsets labeling non-vanishing Pl\"ucker coordinates.  A matroid $\M$ is a \defn{positroid} if $\M = \M_X$ for some $X \in \Gr(k,n)_{\geq 0}$.  Thus $\M(N) = \M_{M(N)}$ is always a positroid, and it follows from Postnikov's work \cite{Pos} that every positroid occurs in this way.

The positroid stratification \cite{KLS, Pos} is the stratification $\Gr(k,n) = \bigcup_\M \Pio_\M$ obtained by intersecting $n$ cyclically rotated Schubert stratifications.  Each such stratum is labeled by a positroid $\M$.  We denote the corresponding closed positroid variety by $\Pi_\M$ and the open stratum by $\Pio_\M$.  For any $X \in (\Pi_\M)_{>0} = \Pio_\M \cap \Gr(k,n)_{\geq 0}$, we have $\M_X = \M$ -- so all totally nonnegative points in an open positroid stratum have the same matroid.  Picking $X \in (\Pi_\M)_{>0}$ one defines
$$
\A(\M) := \{(\tau, T) \in \A_n \mid F_{\tau,T}(X) > 0\}
$$
and 
$$
\D(\M) := \{D \in \D_n \mid F_{D}(X) > 0\}.
$$
We show that $\A(\M)$ and $\D(\M)$ do not depend on the choice of $X$, but only $\M$.  In particular $\A(N) = \A(\M(N))$ and $\D(N) = \D(\M(N))$.  It would be interesting to give a direct description of $\A(\M)$ and $\D(\M)$ similar to Oh's description \cite{Oh} of $\M$ as an intersection of cyclically rotated Schubert matroids.

Let $\C[\Pi_\M]$ denote the homogeneous coordinate ring of a positroid variety.  We prove the following statements: 
\begin{enumerate}
\item For each positroid $\M$, the set
$$
\{ F_{\tau,T} \mid (\tau,T) \in \A(\M) \}
$$
forms a basis of the degree 2 part of $\C[\Pi_\M]$ (Theorem \ref{thm:TLbasis}).  
\item
The set
$$
\{ F_{D} \mid D \in \D(\M) \}
$$
forms a basis of the degree 3 part of $\C[\Pi_\M]$ (Theorem \ref{thm:web}). 
\end{enumerate}
Thus we have a combinatorially defined, cyclically invariant basis for these parts of the homogeneous coordinate rings.  These bases are likely related to (but not identical to, see \cite{KK}) Lusztig's dual canonical basis.  We remark that Launois and Lenagan \cite{LL} have studied the cyclic action on the quantized coordinate ring of the Grassmannian.

There is also a relation to cluster structures on Grassmannians and positroid varieties that for simplicity I have chosen to omit discussing in this work.  We note that Fomin and Pylyavskyy \cite{FoPy} have constructed, using generalizations of Kuperberg's webs, bases of certain rings of invariants, that include Grassmannians of $3$-planes as special cases.  Marsh and Scott \cite{MaSc} have investigated twists of Grassmannians in terms of dimer configurations.  Recently, cluster structures related to the coordinate rings of positroid varieties have also been studied by Leclerc \cite{Lec} and Muller and Speyer \cite{MuSp}. 

We hope to return to the connection with canonical and semicanonical bases, and cluster structures in the future.

{\bf Acknowledgements.} We thank Milen Yakimov for pointing us to \cite{LL}.

\section{The dimer model and the totally nonnegative Grassmannian}
\subsection{TNN Grassmannian}
In this section, we fix integers $k,n$ and consider the real Grassmannian $\Gr(k,n)$ of (linear) $k$-planes in $\R^n$.  Recall that each $X \in \Gr(k,n)$ has Pl\"ucker coordinates $\Delta_I(X)$ labeled by $k$-element subsets $I \subset [n]$, defined up to a single common scalar.  It will be convenient for us to talk about the Pl\"ucker coordinates $\Delta_I$ as genuine functions.  We will thus often work with the affine cone $\tGr(k,n)$ over the Grassmannian.  A point in $\tX \in \tGr(k,n)$ is given by a collection of Pl\"ucker coordinates $\Delta_I(\tX)$, satisfying the Pl\"ucker relations \cite{Ful} (without the equivalence relation where we scale all coordinates by a common scalar).

Suppose $\tX, \tX' \in \tGr(k,n)$ represent the same point in $\Gr(k,n)$.  Then there exists a non-zero scalar $a \in \R$ such that $\Delta_I(\tX) = a \Delta_I(\tX')$ for all $I \in \binom{[n]}{k}$.  As a shorthand we then write $\tX = a \tX'$.

The TNN (totally nonnegative) Grassmannian $\Gr(k,n)_{\geq 0}$ is the subset of $\Gr(k,n)$ consisting of points $X$ represented by nonnegative Pl\"ucker coordinates $\{\Delta_I(X) \mid I \in \binom{[n]}{k}\}$.  Similarly one can define the TNN part $\tGr(k,n)_{\geq 0}$ of the cone over the Grassmannian.

The cyclic group acts on $\Gr(k,n)_{\geq 0}$ (and on $\tGr(k,n)_{\geq 0}$) with generator $\chi$ acting by the map
$$
\chi: \left(v_1,v_2,\ldots,v_n\right) \mapsto \left(v_2,\ldots,v_n, (-1)^{k-1}v_1\right)
$$
where $v_i$ are columns of some $k\times n$ matrix representing $X$.

\subsection{Dimer model for a bipartite graph with boundary vertices}
Let $N$ be a weighted bipartite network embedded in the disk with $n$ boundary vertices, labeled $1,2,\ldots,n$ in clockwise order.  Each vertex (including boundary vertices) is colored either black or white, and all edges join black vertices to white vertices.  We let $d$ be the number of interior white vertices minus the the number of interior white vertices.  Furthermore we let $d' \in [n]$ be the number of white boundary vertices.  Finally, we assume that all boundary vertices have degree one, and that edges cannot join boundary vertices to boundary vertices.  We shall also use the standard convention that in our diagrams unlabeled edges have weight $1$.

Since the graph is bipartite, the condition that boundary vertices have degree one ensures that the coloring of the boundary vertices is determined by the interior part of the graph.  So we will usually omit the color of boundary vertices from pictures. 

A \defn{dimer configuration} or almost perfect matching $\Pi$ is a subset of edges of $N$ such that 
\begin{enumerate}
\item
each interior vertex is used exactly once
\item
boundary vertices may or may not be used.
\end{enumerate}
The boundary subset $I(\Pi) \subset \{1,2,\ldots,n\}$ is the set of black boundary vertices that are used by $\Pi$ union the set of white boundary vertices that are not used.  By our assumptions we have $|I(\Pi)| =  k:=d' + d$.

Define the \defn{boundary measurement} $\Delta_I(N)$ as follows.  For $I \subset [n]$ a $k$-element subset,
$$
\Delta_I(N) = \sum_{\Pi : I(\Pi) = I} \wt(\Pi)
$$
where $\wt(\Pi)$ is the product of the weight of the edges in $\Pi$.  The first part of the following result is essentially due to Kuo \cite{Kuo} and we will prove it using the language of Temperley-Lieb immanants in Section \ref{sec:doubledimer}.  The second part of the  theorem is due to Postnikov \cite{Pos} who counted paths instead of matchings; see also \cite{Lamnotes} for a proof in the spirit of the current work.  The relation between Postnikov's theory and the dimer model was suggested by the works of Talaska \cite{Tal} and Postnikov, Speyer and Williams \cite{PSW}.

\begin{theorem}\label{thm:matchingplucker}
Suppose $N$ has nonnegative real weights.  Then the coordinates $(\Delta_I(N))_{I \in \binom{[n]}{k}})$ defines a point $\tM(N)$ in the cone over the Grassmannian $\tGr(k,n)_{\geq 0}$.  Furthermore, every $X \in \tGr(k,n)_{\geq 0}$ is realizable $X = \tM(N)$ by a planar bipartite graph.
\end{theorem}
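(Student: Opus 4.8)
The plan is to establish the two assertions by separate arguments. For the first, nonnegativity of the coordinates $\Delta_I(N)$ is immediate: each is a sum of products of nonnegative edge weights. The content is the Pl\"ucker relations, and for these it is enough to verify the three-term relations
$$
\Delta_{Sac}(N)\,\Delta_{Sbd}(N) \;=\; \Delta_{Sab}(N)\,\Delta_{Scd}(N) + \Delta_{Sad}(N)\,\Delta_{Sbc}(N)
$$
for every $(k-2)$-subset $S \subset [n]$ and all $a < b < c < d$ in $[n] \setminus S$ (here $Sac$ abbreviates $S \cup \{a,c\}$, etc., with indices sorted), as these already cut out $\tGr(k,n)$ set-theoretically. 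I would prove these relations by expanding each product via the overlay of two dimer configurations: by \eqref{eq:TL} (Theorem \ref{thm:TL}), $\Delta_I(N)\,\Delta_J(N) = \sum_{(\tau,T)} F_{\tau,T}(N)$, the sum over partial non-crossing pairings $(\tau,T) \in \A_n$ compatible with $(I,J)$. Since the immanants $F_{\tau,T}(N)$ depend only on $N$ and $(\tau,T)$, the three-term relation reduces to the purely combinatorial identity that the set of $(\tau,T)$ compatible with $(Sac, Sbd)$ is the disjoint union of the set compatible with $(Sab, Scd)$ and the set compatible with $(Sad, Sbc)$ --- no compatible pairing can join $a$ to $c$ or $b$ to $d$ through the overlay without forcing a crossing. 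This is Kuo's graphical condensation \cite{Kuo} recast in Temperley--Lieb language, and carrying it out is most natural once the immanants and the compatibility relation have been set up in Section \ref{sec:doubledimer}.

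For the second assertion I would invoke Postnikov's parametrization \cite{Pos}. Every point of $\Gr(k,n)_{\geq 0}$ lies in a positroid cell, for which Postnikov constructs a reduced bipartite plabic graph $N$ and a surjection from positive edge weights (modulo gauge) onto the cell, with Pl\"ucker coordinates realized as generating functions of flows relative to a perfect orientation of $N$. It then remains to match flow generating functions with the matching generating functions $\Delta_I(N)$: fixing a perfect orientation of a bipartite graph gives a weight-preserving bijection, up to a fixed monomial change of edge weights and an overall scalar, between almost perfect matchings with boundary $I$ and flows landing on $I$, following Talaska \cite{Tal} and Postnikov--Speyer--Williams \cite{PSW}; see also \cite{Lamnotes}. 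Hence $\{M(N)\}$ exhausts $\Gr(k,n)_{\geq 0}$. To lift to the cone, observe that if $N$ has an interior vertex $v$ (which we may always arrange, e.g.\ via lollipops), then scaling all edges at $v$ by $t > 0$ multiplies every $\wt(\Pi)$, hence every $\Delta_I(N)$, by $t$, since each almost perfect matching meets $v$ in exactly one edge; so every positive rescaling of a representative is attained and $\tM(N)$ sweeps out all of $\tGr(k,n)_{\geq 0}$.

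The main obstacle lies in the first part: the combinatorial core of Kuo condensation, namely the bijection showing that the ``forbidden'' boundary pairings reorganize exactly into the two terms on the right of the three-term relation. Everything else is formal --- nonnegativity, the reduction to three-term relations, and, for the second part, the matchings-versus-flows dictionary together with the bookkeeping of gauge freedom and of the overall scalar, with the surjectivity itself imported from \cite{Pos} (and \cite{Lamnotes}).
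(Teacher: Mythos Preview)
Your approach to the second assertion (realizability) is fine and matches the paper's: both defer to Postnikov, with the matchings/flows dictionary via \cite{Tal,PSW,Lamnotes}.

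The first assertion, however, has a real gap. You assert that the three-term Pl\"ucker relations
\[
\Delta_{Sac}\,\Delta_{Sbd} \;=\; \Delta_{Sab}\,\Delta_{Scd} + \Delta_{Sad}\,\Delta_{Sbc}
\]
cut out $\tGr(k,n)$ set-theoretically. This is false for $k\geq 3$. For a concrete counterexample take $k=3$, $n=6$, and the vector with $\Delta_{123}=\Delta_{456}=1$ and all other $\Delta_I=0$. Every three-term relation has $|S|=1$, so all six Pl\"ucker coordinates appearing share the element $s\in S$; at most one of them can equal $\Delta_{123}$ or $\Delta_{456}$, so every product vanishes and the relation reads $0=0$. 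Yet the general one-index-swap relation with $(i_1,i_2)=(1,2)$ and $(j_1,\dots,j_4)=(3,4,5,6)$ gives $\Delta_{123}\Delta_{456}-\Delta_{124}\Delta_{356}+\Delta_{125}\Delta_{346}-\Delta_{126}\Delta_{345}=1\neq 0$, so this point is not on the Grassmannian. Hence verifying only three-term relations is not enough.

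The paper avoids this by proving directly the full one-index-swap Pl\"ucker relation \eqref{eq:plucker}. The mechanism is the same as yours---expand each product $\Delta_I\Delta_J$ via Theorem~\ref{thm:TL} as a sum of $F_{\tau,T}$---but now one must handle signed cancellation among many terms rather than a set-theoretic disjoint union. The paper does this with a sign-reversing involution: in any compatible $(\tau,T)$ arising from a term of \eqref{eq:plucker}, exactly one edge of $\tau$ pairs two $j$-indices $(j_a,j_b)$; swapping which side of the relation contributes $j_a$ versus $j_b$ gives the involution, and a parity count using non-crossingness shows the sign flips. Your Kuo-style argument is the special case where this involution has only two orbits and the signs are visible by inspection; to make your proof go through you would need to upgrade it to the general involution.
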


We let $M(N)$ denote the equivalence class of $\tM(N)$ in $\Gr(k,n)$.  We will often implicitly assume that $N$ does have dimer configurations, so that $M(N)$ is well-defined.

\subsection{Gauge equivalences and local moves}\label{sec:moves}
We now discuss operations on $N$ that preserve $M(N)$.

Let $N$ be a planar bipartite graph.  If $e_1,e_2,\ldots,e_d$ are incident to an interior vertex $v$, we can multiply all of their edge weights by the same constant $c \in \R_{> 0}$ to get a new graph $N'$, and we have $M(N') = M(N)$.  This is called a gauge equivalence.

We also have the following local moves, replacing a small local part of $N$ by another specific graph to obtain $N'$:
\begin{enumerate}
\item[(M1)]
Spider move \cite{GK}, square move \cite{Pos}, or urban renewal \cite{Propp,Ciu}: assuming the leaf edges of the spider have been gauge fixed to 1, the transformation is
$$
a'=\frac{a}{ac+bd} \qquad b'=\frac{b}{ac+bd} \qquad c'=\frac{c}{ac+bd} \qquad d'=\frac{d}{ac+bd}
$$
\begin{center}
\begin{tikzpicture}[scale=0.7]
\draw (-2,0) -- (0,1)--(2,0)--(0,-1)--  (-2,0);
\draw (0,1) -- (0,2);
\draw (0,-1) -- (0,-2);
\node at (-1.2,0.7) {$a$};
\node at (-1.2,-0.7) {$d$};
\node at (1.2,0.7) {$b$};
\node at (1.2,-0.7) {$c$};

\filldraw[black] (0,1) circle (0.1cm);
\filldraw[black] (0,-1) circle (0.1cm);
\filldraw[white] (-2,0) circle (0.1cm);
\draw (-2,0) circle (0.1cm);
\filldraw[white] (0,2) circle (0.1cm);
\draw (0,2) circle (0.1cm);
\filldraw[white] (2,0) circle (0.1cm);
\draw (2,0) circle (0.1cm);
\filldraw[white] (0,-2) circle (0.1cm);
\draw (0,-2) circle (0.1cm);
\end{tikzpicture}
\hspace{30pt}
\begin{tikzpicture}[scale=0.7]
\draw (0,-2) -- (1,0)-- (0,2)-- (-1,0)-- (0,-2);
\draw (1,0) -- (2,0);
\draw (-1,0) -- (-2,0);
\node at (0.7,-1.2) {$a'$};
\node at (-0.7,-1.2) {$b'$};
\node at (0.7,1.2) {$d'$};
\node at (-0.7,1.2) {$c'$};

\filldraw[black] (1,0) circle (0.1cm);
\filldraw[black] (-1,0) circle (0.1cm);
\filldraw[white] (-2,0) circle (0.1cm);
\draw (-2,0) circle (0.1cm);
\filldraw[white] (0,2) circle (0.1cm);
\draw (0,2) circle (0.1cm);
\filldraw[white] (2,0) circle (0.1cm);
\draw (2,0) circle (0.1cm);
\filldraw[white] (0,-2) circle (0.1cm);
\draw (0,-2) circle (0.1cm);
\end{tikzpicture}
\end{center}
\item[(M2)]
Valent two vertex removal.  If $v$ has degree two, we can gauge fix both incident edges $(v,u)$ and $(v,u')$ to have weight 1, then contract both edges (that is, we remove both edges, and identify $u$ with $u'$).  Note that if $v$ is a valent two-vertex adjacent to boundary vertex $b$, with edges $(v,b)$ and $(v,u)$, then removing $v$ produces an edge $(b,u)$, and the color of $b$ flips.

\item[(R1)]
Multiple edges with same endpoints is the same as one edge with sum of weights.
\item[(R2)]
Leaf removal.  Suppose $v$ is leaf, and $(v,u)$ the unique edge incident to it.  Then we can remove both $v$ and $u$, and all edges incident to $u$.  However, if there is a boundary edge $(b,u)$ where $b$ is a boundary vertex, then that edge is replaced by a boundary edge $(b,w)$ where $w$ is a new vertex with the same color as $v$.
\item[(R3)]
Dipoles (two degree one vertices joined by an edge) can be removed.
\end{enumerate}

The following result is a case-by-case check.
\begin{proposition}
Each of these relations preserves $M(N)$.
\end{proposition}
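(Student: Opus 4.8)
The plan is to deduce all the cases from one common reduction. Since a point of $\Gr(k,n)$ is determined by the ratios of its Pl\"ucker coordinates, it suffices to show that each move $N \rightsquigarrow N'$ produces a positive constant $\alpha = \alpha_{N,N'}$, \emph{independent of} $I$, with $\Delta_I(N') = \alpha\,\Delta_I(N)$ for all $I \in \binom{[n]}{k}$, and that the integer $k = d+d'$ is unchanged; then $\tM(N') = \alpha\,\tM(N)$ and hence $M(N') = M(N)$. For the gauge equivalences and the moves (M2), (R1), (R2), (R3) the constant $\alpha$ will come from a bijection between dimer configurations of $N$ and of $N'$ that scales $\wt(\Pi)$ by the fixed factor $\alpha$ and preserves the boundary subset $I(\Pi)$; for the spider move (M1) there is no such weight-preserving bijection, and one argues instead by a local partition-function computation.

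\emph{The bijective cases.} For a gauge equivalence at an interior vertex $v$ with incident edges $e_1,\dots,e_d$: every dimer configuration covers $v$ exactly once, so exactly one $e_i$ appears in each $\Pi$, and rescaling all the $e_i$ by $c$ multiplies every $\wt(\Pi)$ by $c$; so $\alpha = c$. For (R1): at most one of a bundle of parallel edges is ever used at a vertex, so the matchings factor through the event ``these two endpoints are matched'', whose generating function is $\sum_i \wt(e_i)$, giving $\alpha = 1$. For (R3): the two endpoints of a dipole each have degree one and are forced to be matched to each other, so $\alpha$ is that edge's weight. For (R2): an interior leaf $v$ forces its unique edge $(v,u)$ into every $\Pi$, hence forces $u$ to be matched to $v$ and every other edge at $u$ to be unused; deleting $v$, $u$, and the edges at $u$ is thus a bijection scaling weights by $\alpha = \wt(v,u)$, and the replacement of a boundary edge $(b,u)$ by a forced leaf $(b,w)$ with $w$ of the colour of $v$ is a routine check: $b$ is unused in every configuration of $N$ and, being recoloured, is used in every configuration of $N'$, so its membership in the boundary subset is unchanged. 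For (M2): an interior degree-two vertex $v$ with unit edges is matched to exactly one of its two neighbours $u,u'$, and contracting both edges (identifying $u$ with $u'$) is a bijection with $\alpha = 1$, the boundary variant being checked as in (R2). In each of these cases it is immediate that $d = \#\{\text{interior white}\} - \#\{\text{interior black}\}$ and $d' = \#\{\text{white boundary}\}$, hence $k$, are preserved.

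\emph{The spider move (M1)}, which I expect to be the one case with genuine content. Label the two white vertices of the square $L, R$, the two white legs $T', B'$, and the two black vertices $T, B$; after the move the black vertices become $P, Q$ while $L, R, T', B'$ are untouched, and all of $L, R, T', B', T, B, P, Q$ are interior vertices of $N$, so none of them affects any $I(\Pi)$. For $U \subseteq \{L,R,T',B'\}$ let $Z_U$ (resp.\ $Z'_U$) be the weight generating function of matchings of the modified patch that cover $T, B$ (resp.\ $P, Q$) and exactly the vertices of $U$ among $\{L,R,T',B'\}$. Conditioning a dimer configuration on its restriction to the (identical) complement of the patch gives
$$
\Delta_I(N) = \sum_{U} c_U(I)\, Z_U, \qquad \Delta_I(N') = \sum_{U} c_U(I)\, Z'_U,
$$
with the \emph{same} coefficients $c_U(I)$, which depend only on the part of $N$ outside the patch. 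So it is enough to verify $Z'_U = \lambda\, Z_U$ for every $U$, with $\lambda = (ac+bd)^{-1}$. A direct enumeration of matchings of the patch (each of $T, B$ matched to one of its three neighbours, the two targets distinct) shows the only nonzero values are the six with $|U| = 2$:
$$
Z_{\{T',L\}} = d,\ \ Z_{\{T',R\}} = c,\ \ Z_{\{T',B'\}} = 1,\ \ Z_{\{L,R\}} = ac+bd,\ \ Z_{\{L,B'\}} = a,\ \ Z_{\{R,B'\}} = b,
$$
and the analogous enumeration after the move gives $Z'_{\{T',R\}} = c'$, $Z'_{\{R,B'\}} = b'$, $Z'_{\{L,R\}} = 1$, $Z'_{\{L,B'\}} = a'$, $Z'_{\{T',L\}} = d'$, and $Z'_{\{T',B'\}} = a'c' + b'd'$. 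Substituting $a' = a/(ac+bd)$ and its cousins, every one of the six identities $Z'_U = \lambda Z_U$ holds; the only one that is not purely formal is $a'c' + b'd' = (ac+bd)/(ac+bd)^2 = \lambda = \lambda\cdot Z_{\{T',B'\}}$, which is exactly where the denominator $ac+bd$ of the spider formula is needed. Since the move swaps two interior black vertices for two interior black vertices and recolours nothing, $k$ is preserved, so $\tM(N') = \lambda\,\tM(N)$ and $M(N') = M(N)$. The only point demanding attention, in this case and the others, is precisely the verification that gadget vertices are interior (so they never enter any $I(\Pi)$) and that the ``outside'' coefficients $c_U(I)$ are common to $N$ and $N'$; granting that, the proposition is the asserted case-by-case check.
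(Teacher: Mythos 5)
Your proof is correct, and it supplies the details that the paper leaves as ``a case-by-case check.'' The strategy — show each move scales every $\Delta_I$ by a constant $\alpha$ independent of $I$ and preserves $k=d+d'$ — is the right reduction, and your treatment of the bijective cases is sound (the one point deserving the care you gave it is the colour flip of $b$ in (R2)/(M2), where using/unusing and recolouring cancel in the definition of $I(\Pi)$). For the spider move, your local partition-function argument is the standard one: the six values
$$
Z_{\{L,R\}}=ac+bd,\quad Z_{\{L,B'\}}=a,\quad Z_{\{R,B'\}}=b,\quad Z_{\{T',L\}}=d,\quad Z_{\{T',R\}}=c,\quad Z_{\{T',B'\}}=1
$$
and their primed counterparts match the picture in Section~\ref{sec:moves}, and the resulting global scalar $\alpha=(ac+bd)^{-1}$ agrees with the $\tM(N)=(ac+bd)\tM(N')$ that the paper states later in the proof of Proposition~\ref{prop:web}. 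Two minor points: (R1) is a grouping of configurations rather than a bijection (you flagged this correctly with ``factor through the event''), and in (M2) the boundary parity check is analogous to but not literally the same as (R2), since $b$ is not forced one way in every configuration; your argument covers this but the phrasing ``checked as in (R2)'' understates the small extra case analysis.
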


The following result is due to Postnikov \cite{Pos} in the more general setting of plabic graphs.

\begin{theorem}\label{thm:Pos}
Suppose $N$ and $N'$ are planar bipartite graphs with $M(N) = M(N')$.  Then $N$ and $N'$ are related by local moves and gauge equivalences.  
\end{theorem}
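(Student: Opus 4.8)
The plan is to deduce this from Postnikov's move-equivalence theorem for plabic graphs \cite{Pos}. Observe first that a (weighted) planar bipartite graph $N$ is a special kind of plabic graph, and conversely that Postnikov's local moves on plabic graphs — the square move, the (un)contraction of a unicolored edge, and the insertion/deletion of a degree two vertex — together with their effect on edge weights, are all realized by finite sequences of the moves (M1), (M2), (R1)--(R3) and gauge equivalences of Section \ref{sec:moves}. Unicolored edges never occur in a bipartite graph, but they arise transiently when applying Postnikov's moves; contracting such an edge and then re-bipartiting by inserting a degree two vertex is a composition of our operations. Hence it suffices to work at the level of (weighted) plabic graphs.

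First I would pass to the reduced case. Using the reductions (R1)--(R3) and valent-two-vertex removal (M2), any planar bipartite network can be brought — without changing $M(N)$ — to one whose underlying plabic graph is reduced in Postnikov's sense (minimal number of faces within its move class, no leaves other than boundary lollipops, no parallel edges, and so on). Call the outputs $N_0$ and $N_0'$; then $M(N_0) = M(N) = M(N') = M(N_0')$, and it remains to treat two reduced networks with the same $M$.

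Since $M(N_0) = M(N_0')$ is a single point of $\Gr(k,n)_{\geq 0}$, it lies in one positroid stratum $\Pio_\M$ with $\M = \M(N)$. By Postnikov, a reduced plabic graph is determined, up to moves that preserve reducedness, by its decorated trip permutation, and that permutation depends only on the stratum $\Pio_\M$; hence the underlying plabic graphs of $N_0$ and $N_0'$ are move-equivalent. Carrying out those moves (and the induced weight transformations, which again lie among our operations), we may assume $N_0$ and $N_0'$ have the \emph{same} underlying graph $G$ and differ only in their edge weights. Finally, Postnikov's parametrization theorem asserts that the boundary measurement map from the space of positive edge weights of $G$ modulo gauge equivalence to $\Pio_\M$ is a bijection. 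As both weightings of $G$ are sent to the same point of the stratum, they differ by a gauge equivalence, which finishes the argument.

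The main obstacle is the bookkeeping behind the first two steps: checking that the reduction to a reduced graph can always be completed using only the listed moves (in particular that no irreducible obstruction survives), and matching Postnikov's plabic-graph moves and his path/flow boundary measurements with the bipartite moves and the dimer boundary measurements $\Delta_I(N)$ used here. The latter identification is precisely the bridge between Postnikov's theory and the dimer model (cf.\ \cite{Tal, PSW} and Theorem \ref{thm:matchingplucker}); once that dictionary is fixed, the rest of the proof is a translation of the corresponding statements in \cite{Pos}.
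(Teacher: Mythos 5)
The paper cites this result to Postnikov without giving a proof, so there is no in-text argument to compare against; your sketch supplies the standard derivation and is essentially correct. The strategy — first reduce both networks, then invoke Postnikov's theorem that reduced plabic graphs with the same decorated trip permutation (which is determined by the positroid cell) are move-equivalent, then use the parametrization theorem for the cell to recover the gauge factor — is the right one. One small imprecision: the reduction to a reduced graph may in general also require the square move (M1), not merely (R1)--(R3) and (M2), since a bad double crossing of two trips must first be untangled by square moves before a parallel pair or a bubble appears and can be collapsed; this is harmless since (M1) is among the allowed moves. The other gap you flag — the translation between Postnikov's plabic-graph moves (including unicolored edge contraction) together with his path/flow boundary measurements on the one hand, and the bipartite moves with dimer measurements $\Delta_I$ on the other — is indeed where the real bookkeeping lies; this dictionary is set up in \cite{Tal}, \cite{PSW}, and \cite{Lamnotes}, and once it is fixed the rest of the argument is a faithful translation of Postnikov's theorems.
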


\subsection{Positroid stratification}
Let $X \in \Gr(k,n)$.  The matroid $\M_X$ of $X$ is the collection 
$$
\M_X := \left\{ I \in \binom{[n]}{k} \mid \Delta_I(X) \neq 0\right\}
$$ 
of $k$-element subsets of $[n]$ labeling non-vanishing Pl\"ucker coordinates of $X$.  If $X \in \Gr(k,n)_{\geq 0}$ then $\M$ is called a \defn{positroid}.  Unlike matroids in general, positroids have been completely classified and characterized \cite{Pos}.  Oh \cite{Oh} shows that positroids are exactly the intersections of cyclically rotated Schubert matroids.  Lam and Postnikov \cite{LP} show that positroids are exactly the matroids that are closed under sorting (see Section \ref{sec:logconcave}).

We have a stratification
$$
\Gr(k,n) = \bigcup_\M \Pio_\M
$$ 
of the Grassmannian by \defn{open positroid varieties}, labeled by positroids $\M$.  The strata $\Pio_\M$ are defined as the intersections of cyclically rotated Schubert cells (see \cite{KLS}).  The closure $\Pi_\M$ of $\Pio_\M$ is an irreducible subvariety of the Grassmannian called a \defn{(closed) positroid variety}.  Postnikov \cite{Pos} showed  

\begin{theorem}\label{thm:cell}\
\begin{enumerate}
\item
The intersection $(\Pi_\M)_{>0} = \Gr(k,n)_{\geq 0} \cap \Pio_\M$ is homeomorphic to $\R_{>0}^d$, where $d = \dim(\Pi_\M)$.  
\item
For each positroid $\M$, there exists a planar bipartite graph $N_\M = N_\M(t_1,t_2,\ldots,t_d)$, where $d$ of the edges have weights given by parameters $t_1,\ldots,t_d$ and all other weights are 1, such that
$$
(t_1,t_2,\ldots, t_d) \mapsto M(N_\M(t_1,t_2,\ldots,t_d))
$$
is a parametrization of $(\Pi_\M)_{>0}$ as $(t_1,t_2,\ldots, t_d)$ vary over $\R_{>0}^d$.
\end{enumerate}
\end{theorem}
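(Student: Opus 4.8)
The plan is to deduce part (1) from part (2): once we have established that $(t_1,\ldots,t_d)\mapsto M(N_\M(t_1,\ldots,t_d))$ is a continuous bijection onto $(\Pi_\M)_{>0}$, it remains only to check that it is a homeomorphism. I would do this either by exhibiting a continuous inverse---recovering the edge parameters as Laurent monomials in the Pl\"ucker coordinates using the matching interpretation of the $\Delta_I$---or by combining properness of the map with invariance of domain. So the real content lies in constructing $N_\M$ and proving the parametrization property, and I would treat these together.

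To build $N_\M$, I would first fix any $X_0 \in (\Pi_\M)_{>0}$ and invoke the surjectivity half of Theorem \ref{thm:matchingplucker} to obtain a planar bipartite graph $N_0$ with nonnegative weights and $M(N_0) = X_0$, so that $\M(N_0) = \M_{X_0} = \M$. Then I would apply the reductions of Section \ref{sec:moves}---leaf and dipole removal (R2), (R3), parallel-edge merging (R1), and bivalent-vertex contraction (M2)---to pass to a \emph{reduced} graph $N_\M$, one admitting no move that strictly decreases its number of faces. Gauge-fixing, by rescaling the edges around each interior vertex so that a spanning collection of edges acquires weight $1$, leaves $d$ free edge weights $t_1,\ldots,t_d$, and an Euler-characteristic count shows that $d$ equals the number of faces of $N_\M$ minus one. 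Identifying this with $\dim \Pi_\M$ is where the combinatorial description of $\M$---its Grassmann necklace or decorated permutation, and the known dimension formula for positroid varieties---must be brought in, together with the fact that all reduced graphs realizing $\M$ have the same face count.

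Next I would show that $\phi_\M : (t_1,\ldots,t_d)\mapsto M(N_\M(t))$ is a bijection onto $(\Pi_\M)_{>0}$. Well-definedness into $\Gr(k,n)_{\geq 0}$ is Theorem \ref{thm:matchingplucker}; since each $\Delta_I(N_\M(t))$ is a sum of monomials with positive coefficients in the $t_j$, its vanishing does not depend on the particular positive values, so $\M(N_\M(t)) = \M$ for every $t$. Because the positroid cells partition $\Gr(k,n)_{\geq 0}$ and are separated by their matroids---so that, on the totally nonnegative part, the cyclically rotated Schubert conditions cutting out $\Pio_\M$ amount to having matroid $\M$---the image of $\phi_\M$ sits inside $(\Pi_\M)_{>0}$. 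For surjectivity, given $X \in (\Pi_\M)_{>0}$ I would write $X = M(N)$, reduce $N$ to a reduced graph $N'$ with $\M(N') = \M$, use that reduced graphs realizing a fixed positroid are connected by square moves (M1) and contractions to see that $N'$ is obtained from $N_\M$ by such moves, and transport the weights of $N'$ back along them to realize $X$ as a value of $\phi_\M$. For injectivity, if $\phi_\M(t) = \phi_\M(t')$ then Theorem \ref{thm:Pos} relates the two weightings of $N_\M$ by local moves and gauge equivalences; reducedness of $N_\M$ should force any such chain that returns to $N_\M$ with the chosen normalization to act trivially, giving $t = t'$---alternatively this is immediate from the explicit inverse in the first paragraph.

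The main obstacle is the reducedness theory that underlies the dimension count, the connectivity of reduced graphs under square moves, and the rigidity used for injectivity: this is exactly the technical core of Postnikov's cell decomposition \cite{Pos}, and I would cite it rather than reprove it. In this write-up the role of Theorems \ref{thm:matchingplucker} and \ref{thm:Pos} is to repackage that input in the dimer language used in the rest of the paper; the only genuinely new bookkeeping is translating ``reduced plabic graph of a given type'' into ``reduced planar bipartite graph realizing the positroid $\M$'' and checking that the moves of Section \ref{sec:moves} suffice.
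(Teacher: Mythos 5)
The paper does not prove Theorem~\ref{thm:cell} at all---it is stated with the attribution ``Postnikov \cite{Pos} showed,'' and no argument is given. Your reconstruction is therefore not being compared against a proof in the paper but against that citation. With that understood, your outline is faithful to the actual content of Postnikov's cell decomposition: the facts you isolate as needing to be imported---the dimension formula equating $\dim\Pi_\M$ with $(\#\text{faces}-1)$ for a reduced graph, connectedness of reduced graphs of a fixed type under square moves (M1) and contractions (M2), and the rigidity/explicit inverse giving injectivity and the homeomorphism property---are exactly the technical core of \cite{Pos}, and deferring to it is what the paper does implicitly. Your observation that the edge parameters can be recovered as Laurent monomials (in fact subtraction-free rational expressions) in the Pl\"ucker coordinates is the cleanest way to get both injectivity and continuity of the inverse, and matches the known ``boundary measurement map is invertible on each cell'' statement. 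One small caution: in the injectivity paragraph the phrase ``reducedness of $N_\M$ should force any such chain that returns to $N_\M$ \ldots to act trivially'' is not quite right as stated---a chain of moves can return to the same underlying graph nontrivially (e.g.\ two square moves at the same face); what actually saves you is that gauge-fixing leaves no residual gauge freedom on a reduced graph, or, better, the explicit inverse you mention in the alternative. Since you flag that alternative yourself, this is a presentational wrinkle rather than a gap.
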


In particular, positroids can be characterized completely in terms of planar bipartite graphs.  Namely, $\M$ is a positroid if and only if it is the matroid $\M_X$ of a point $X = M(N)$ where $N$ is a planar bipartite graph.

It follows from Theorem \ref{thm:cell} that $(\Pi_\M)_{>0}$ is Zariski-dense in $\Pi_\M$.  We shall construct elements of the homogeneous coordinate ring $\C[\Pi_\M]$ using the combinatorics of planar bipartite graphs.

\subsection{Bridge and lollipop recursion}
We will require two additional operations on planar bipartite graphs that do not preserve $M(N)$.  The first operation is \defn{adding a bridge at $i$}, black at $i$ and white at $i+1$.  It modifies a bipartite graph near the boundary vertices $i$ and $i+1$:
\begin{center}
\begin{tikzpicture}
\draw (1,0) -- (4,0);
\draw (2,0) -- (2,2);
\draw (3,0) -- (3,2) ;
\node at (2,-0.2) {$i+1$};
\node at (3,-0.2) {$i$};

\draw[->] (4.5,1) -- (5.5,1);
\begin{scope}[{shift={(5,0)}}]
\draw (1,0) -- (4,0);
\draw (2,0) -- (2,2);
\draw (3,0) -- (3,2) ;
\draw (2,1) -- (3,1);
\filldraw [white] (2,1) circle (0.1cm);
\filldraw [black] (3,1) circle (0.1cm);
\draw (2,1) circle (0.1cm);
\node at (2,-0.2) {$i+1$};
\node at (3,-0.2) {$i$};
\node at (2.5,1.2) {$t$};
\end{scope}
\end{tikzpicture}
\end{center}

The bridge edge is the edge labeled $t$ in the above picture.  Note that in general this modification might create a graph that is not bipartite -- for example, if in the original graph $i$ is connected to a black vertex.  However, by adding valent two vertices using local move (M2), we can always assume we obtain a bipartite graph.

The second operation is \defn{adding a lollipop} which can be either white or black.  This inserts a new boundary vertex connected to an interior leaf.  The new boundary vertices are then relabeled:
\begin{center}
\begin{tikzpicture}
\draw (1,0) -- (5,0);
\draw (2,0) -- (2,1);
\draw (4,0) -- (4,1) ;
\node at (2,-0.2) {$i+1$};
\node at (4,-0.2) {$i$};

\draw[->] (5.5,0.5) -- (6.5,0.5);
\begin{scope}[{shift={(6,0)}}]
\draw (1,0) -- (5,0);
\draw (2,0) -- (2,1);
\draw (4,0) -- (4,1) ;
\draw (3,0) -- (3,0.8);
\filldraw [black] (3,0.8) circle (0.1cm);
\node at (1.8,-0.2) {$(i+2)'$};
\node at (3.1,-0.2) {$(i+1)' $};
\node at (4,-0.2) {$i'$};
\end{scope}
\end{tikzpicture}
\end{center}

In the following we will use $N_\M$ to denote any parameterized planar bipartite graph satisfying Theorem \ref{thm:cell}(2).  The following result is proved in \cite{Lamnotes}.

\begin{theorem}\label{thm:bridgerecursion}
Suppose $\M$ is not represented by the empty graph.  Let $d  = \dim(\Pi_\M)$.  Then there exists a positroid $\M'$ such that either
\begin{enumerate}
\item $\dim(\Pi_{\M'}) = d-1$ and $N_\M(t_1,t_2,\ldots,t)$ is obtained from $N_{\M'}(t_1,t_2,\ldots,t_{d-1})$ by adding a bridge black at $i$ and white at $i+1$, such that the bridge edge has weight $t$, and all other added edges have weight $1$; or
\item $\dim(\Pi_{\M'}) = d$ and $N_\M(t_1,t_2,\ldots,t_d)$ is obtained from $N_{\M'}(t_1,t_2,\ldots,t_d)$ by inserting a lollipop at some new boundary vertex $i$.
\end{enumerate}
\end{theorem}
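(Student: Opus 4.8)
The hypothesis simply says that $\M$ has at least one boundary vertex, i.e.\ $n\ge 1$. The plan is to pass to the combinatorics of decorated permutations and peel off either a lollipop or a bridge. Recall from \cite{Pos} that positroids in $\Gr(k,n)$ correspond bijectively to decorated permutations $\pi$ of $[n]$ (permutations in which every fixed point carries one of two colors), with $k$ the number of anti-exceedances, and that a fixed point of $\pi$ at $i$ is a loop or a coloop of $\M$ according to its color, corresponding in turn to a lollipop at boundary vertex $i$ (white for a loop, black for a coloop) in any planar bipartite graph for $\M$. I will also use the bridge decomposition of \cite{Pos, Lamnotes}: a parametrized graph $N_\M$ meeting Theorem \ref{thm:cell}(2) can be built from an all-lollipop graph by a sequence of $d=\dim\Pi_\M$ bridge additions, so $d=0$ precisely when $\pi$ has only fixed points. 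The recursion in the theorem undoes the last step of such a construction.

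Suppose first that $\pi$ has a fixed point at some $i$. Let $\M'$ be the positroid on $[n]\setminus\{i\}$, relabeled to $[n-1]$, obtained by deleting the loop (resp.\ contracting the coloop) at $i$; since deleting a loop or coloop identifies $\Pi_{\M'}$ with $\Pi_\M$, we have $\dim\Pi_{\M'}=d$. Choose $N_{\M'}$ as in Theorem \ref{thm:cell}(2) and insert a weight-$1$ lollipop of the appropriate color at a new boundary vertex in position $i$, obtaining $N_\M$. Every dimer configuration of $N_\M$ must use the lollipop edge, so a direct check gives $\Delta_I(N_\M)=\Delta_{I'}(N_{\M'})$ under the evident bijection of index sets; hence $\M(N_\M)=\M$ and $(t_1,\dots,t_d)\mapsto M(N_\M)$ parametrizes $(\Pi_\M)_{>0}$, so $N_\M$ again meets Theorem \ref{thm:cell}(2). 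This is conclusion (2).

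Suppose instead that $\pi$ has no fixed point; then $n\ge 2$ and, by the remark above, $d\ge1$. By the bridge decomposition there is a cyclically adjacent index $i$ at which a bridge black at $i$ and white at $i+1$ can be peeled off: removing it from a graph representing $\M$ yields a graph representing a positroid $\M'$ with $\dim\Pi_{\M'}=d-1$. Choose $N_{\M'}(t_1,\dots,t_{d-1})$ as in Theorem \ref{thm:cell}(2) and let $N_\M$ be the graph obtained by adding this bridge with a fresh parameter $t$, together with weight-$1$ auxiliary valent-two vertices inserted via move (M2) to restore bipartiteness. Postnikov's computation \cite{Pos, Lamnotes} of the effect of a bridge on boundary measurements shows that $M(N_\M)\in\Pio_\M$ for generic positive parameters and that $(t_1,\dots,t_{d-1},t)\mapsto M(N_\M)$ sweeps out all of $(\Pi_\M)_{>0}$ as the parameters range over $\R_{>0}^d$; a dimension count confirms that $t$ supplies exactly the one missing dimension. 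This is conclusion (1).

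The main obstacle is the combinatorial heart of the no-fixed-point case: showing that a \emph{removable} bridge always exists, with the colors and boundary position as stated, and that re-adding it with a free parameter parametrizes all of $(\Pi_\M)_{>0}$ and not just some proper subvariety of the correct dimension. I would handle this exactly as in \cite{Pos, Lamnotes}, by analyzing the action of the one-parameter bridge generator on positroid cells --- it realizes a cover relation in Postnikov's circular Bruhat order --- and by checking that a non-identity decorated permutation always admits a length-decreasing cyclically adjacent transposition on the relevant side with the correct colors. The rest --- keeping the graphs bipartite via move (M2), noting that $\Delta_I(N_\M)$ is a genuine positive weight generating function so that no signs intervene, and matching up index sets --- is routine bookkeeping given Section \ref{sec:moves} and Theorems \ref{thm:matchingplucker} and \ref{thm:Pos}.
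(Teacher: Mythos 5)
The paper does not actually prove this theorem in the text; it simply states ``The following result is proved in \cite{Lamnotes}.'' Your sketch reproduces the intended argument from that source: the BCFW-style bridge/lollipop decomposition driven by the dichotomy on fixed points of the decorated permutation $\pi$ attached to $\M$. Conclusion (2) is the fixed-point (loop/coloop) case, realized by a lollipop whose deletion or contraction leaves $\dim\Pi_\M$ unchanged; conclusion (1) is the fixed-point-free case, realized by peeling off a bridge that drops the dimension by one.

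You are right to single out, as the main obstacle, the claim that a fixed-point-free decorated permutation always admits a removable bridge of the \emph{specific} type ``black at $i$, white at $i+1$'' for some cyclically adjacent pair, and you defer this to \cite{Pos,Lamnotes}. That is the step where the real content lies: the color constraint means one must exhibit an $i$ whose adjacent pair exhibits a particular exceedance/anti-exceedance pattern in $\pi$, not merely produce an arbitrary cover relation in the circular Bruhat order. Since the paper relies entirely on \cite{Lamnotes} for this theorem, your outline is a faithful account of the same approach; to make it self-contained you would need to state and prove that adjacency lemma (it is a short case analysis on $\pi(i)$, $\pi(i+1)$ and their cyclic positions), and also verify that re-introducing the bridge parameter $t$ gives a genuine parametrization of $(\Pi_\M)_{>0}$ rather than merely a map into it --- a point you mention but compress into ``a dimension count confirms.''
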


If $\dim(\Pi_\M) = 0$, then $\M$ consists of a single subset $I$, and $\Pi_\M$ is the unique point in $\Gr(k,n)$ where all Pl\"ucker variables are 0, except $\Delta_I \neq 0$.  Such a point is represented by a lollipop graph $N$, with white lollipops at the locations specified by $I$.  For example, the planar bipartite graph
\begin{center}
\begin{tikzpicture}[scale = 0.6]
\node at (0,2.3) {$1$};
\node at (2.3,0) {$2$};
\node at (0,-2.3) {$3$};
\node at (-2.3,0) {$4$};
\draw (0,0) circle (2cm);
\draw (-2,0) -- (-1,0);
\draw (2,0) -- (1,0);
\draw (0,1) -- (0,2);
\draw (0,-1) -- (0,-2);
\filldraw[black] (1,0) circle (0.1cm);
\filldraw[black] (0,1) circle (0.1cm);
\filldraw[white] (0,-1) circle (0.1cm);
\draw (0,-1) circle (0.1cm);
\filldraw[white] (-1,0) circle (0.1cm);
\draw (-1,0) circle (0.1cm);
\end{tikzpicture}
\end{center}
represents such a point with $I = \{3,4\}$.

\section{Temperley-Lieb immanants and the double-dimer model}\label{sec:doubledimer}
\subsection{Double dimers}
\label{ssec:doubledimer}
A \defn{$(k,n)$-partial non-crossing pairing} is a pair $(\tau, T)$ where $\tau$ is a matching of a subset $S = S(\tau) \subset \{1,2,\ldots,n\}$ of even size, such that when the vertices are arranged in order on a circle, and the edges are drawn in the interior, then the edges do not intersect; and $T$ is a subset of $[n] \setminus S$ satisfying $|S| + 2 |T| = 2k$.  Let $\A_{k,n}$ denote the set of $(k,n)$-partial non-crossing pairings.

A subgraph $\Sigma \subset N$ is a \defn{Temperley-Lieb subgraph} if it is a union of connected components each of which is: (a) a path between boundary vertices, or (b) an interior cycle, or (c) a single edge, such that every interior vertex is used.  Let $(\Pi,\Pi')$ be a double-dimer (that is, a pair of dimer configurations) in $N$.  Then the union $\Sigma = \Pi \cup \Pi'$ is a Temperley-Lieb subgraph:

\begin{tikzpicture}
\coordinate (a4) at (45:2);
\coordinate (a3) at (90:2);
\coordinate (a2) at (135:2);
\coordinate (a1) at (180:2);
\coordinate (a8) at (225:2);
\coordinate (a7) at (270:2);
\coordinate (a6) at (315:2);
\coordinate (a5) at (0:2);
\coordinate (x11) at (-1,1);
\coordinate (x12) at (0,1);
\coordinate (x13) at (1,1);
\coordinate (x21) at (-1,0);
\coordinate (x22) at (0,0);
\coordinate (x23) at (1,0);
\coordinate (x31) at (-1,-1);
\coordinate (x32) at (0,-1);
\coordinate (x33) at (1,-1);

\draw (a1) -- (x21);
\draw (a2) -- (x11);
\draw (a3) -- (x12);
\draw (a4) -- (x13);
\draw (a5) -- (x23);
\draw (a6) -- (x33);
\draw (a7) -- (x32);
\draw (a8) -- (x31);
\draw (x11) -- (x13);
\draw (x21) -- (x23);
\draw (x31) -- (x33);
\draw (x11) -- (x31);
\draw (x12) -- (x32);
\draw (x13) -- (x33);
\draw (0,0) circle (2cm);
\draw[blue,line width = 0.08cm] (x11) -- (x12);
\draw[blue,line width = 0.08cm] (x13) -- (a4);
\draw[blue,line width = 0.08cm] (x21) -- (x22);
\draw[blue,line width = 0.08cm] (x31) -- (x32);
\draw[blue,line width = 0.08cm] (x23) -- (x33);

\filldraw[black] (x11) circle (0.1cm);
\filldraw[black] (x13) circle (0.1cm);
\filldraw[black] (x22) circle (0.1cm);
\filldraw[black] (x31) circle (0.1cm);
\filldraw[black] (x33) circle (0.1cm);

\filldraw[white] (x12) circle (0.1cm);
\draw (x12) circle (0.1cm);
\filldraw[white] (x21) circle (0.1cm);
\draw (x21) circle (0.1cm);
\filldraw[white] (x23) circle (0.1cm);
\draw (x23) circle (0.1cm);
\filldraw[white] (x32) circle (0.1cm);
\draw (x32) circle (0.1cm);

\begin{scope}[{shift={(5,0)}}]
\coordinate (a4) at (45:2);
\coordinate (a3) at (90:2);
\coordinate (a2) at (135:2);
\coordinate (a1) at (180:2);
\coordinate (a8) at (225:2);
\coordinate (a7) at (270:2);
\coordinate (a6) at (315:2);
\coordinate (a5) at (0:2);
\coordinate (x11) at (-1,1);
\coordinate (x12) at (0,1);
\coordinate (x13) at (1,1);
\coordinate (x21) at (-1,0);
\coordinate (x22) at (0,0);
\coordinate (x23) at (1,0);
\coordinate (x31) at (-1,-1);
\coordinate (x32) at (0,-1);
\coordinate (x33) at (1,-1);
\draw (a1) -- (x21);
\draw (a2) -- (x11);
\draw (a3) -- (x12);
\draw (a4) -- (x13);
\draw (a5) -- (x23);
\draw (a6) -- (x33);
\draw (a7) -- (x32);
\draw (a8) -- (x31);
\draw (x11) -- (x13);
\draw (x21) -- (x23);
\draw (x31) -- (x33);
\draw (x11) -- (x31);
\draw (x12) -- (x32);
\draw (x13) -- (x33);
\draw (0,0) circle (2cm);
\draw[red,line width = 0.08cm] (a2) -- (x11);
\draw[red,line width = 0.08cm] (x13) -- (x12);
\draw[red,line width = 0.08cm] (x21) -- (x31);
\draw[red,line width = 0.08cm] (x22) -- (x32);
\draw[red,line width = 0.08cm] (x23) -- (x33);

\filldraw[black] (x11) circle (0.1cm);
\filldraw[black] (x13) circle (0.1cm);
\filldraw[black] (x22) circle (0.1cm);
\filldraw[black] (x31) circle (0.1cm);
\filldraw[black] (x33) circle (0.1cm);

\filldraw[white] (x12) circle (0.1cm);
\draw (x12) circle (0.1cm);
\filldraw[white] (x21) circle (0.1cm);
\draw (x21) circle (0.1cm);
\filldraw[white] (x23) circle (0.1cm);
\draw (x23) circle (0.1cm);
\filldraw[white] (x32) circle (0.1cm);
\draw (x32) circle (0.1cm);
\end{scope}

\draw[->,thick] (7.5,0) -- (8.5,0);

\begin{scope}[{shift={(11,0)}}]
\coordinate (a4) at (45:2);
\coordinate (a3) at (90:2);
\coordinate (a2) at (135:2);
\coordinate (a1) at (180:2);
\coordinate (a8) at (225:2);
\coordinate (a7) at (270:2);
\coordinate (a6) at (315:2);
\coordinate (a5) at (0:2);
\coordinate (x11) at (-1,1);
\coordinate (x12) at (0,1);
\coordinate (x13) at (1,1);
\coordinate (x21) at (-1,0);
\coordinate (x22) at (0,0);
\coordinate (x23) at (1,0);
\coordinate (x31) at (-1,-1);
\coordinate (x32) at (0,-1);
\coordinate (x33) at (1,-1);

\node at (135:2.3) {$a$};
\node at (45:2.3) {$b$};

\draw (a1) -- (x21);
\draw (a2) -- (x11);
\draw (a3) -- (x12);
\draw (a4) -- (x13);
\draw (a5) -- (x23);
\draw (a6) -- (x33);
\draw (a7) -- (x32);
\draw (a8) -- (x31);
\draw (x11) -- (x13);
\draw (x21) -- (x23);
\draw (x31) -- (x33);
\draw (x11) -- (x31);
\draw (x12) -- (x32);
\draw (x13) -- (x33);
\draw (0,0) circle (2cm);
\draw[red,line width = 0.08cm] (a2) -- (x11);
\draw[red,line width = 0.08cm] (x13) -- (x12);
\draw[red,line width = 0.08cm] (x21) -- (x31);
\draw[red,line width = 0.08cm] (x22) -- (x32);
\definecolor{mycolor}{rgb}{0.8,0.2,0.9}
\draw[mycolor,line width = 0.08cm] (x23) -- (x33);
\draw[blue,line width = 0.08cm] (x11) -- (x12);
\draw[blue,line width = 0.08cm] (x13) -- (a4);
\draw[blue,line width = 0.08cm] (x21) -- (x22);
\draw[blue,line width = 0.08cm] (x31) -- (x32);

\filldraw[black] (x11) circle (0.1cm);
\filldraw[black] (x13) circle (0.1cm);
\filldraw[black] (x22) circle (0.1cm);
\filldraw[black] (x31) circle (0.1cm);
\filldraw[black] (x33) circle (0.1cm);

\filldraw[white] (x12) circle (0.1cm);
\draw (x12) circle (0.1cm);
\filldraw[white] (x21) circle (0.1cm);
\draw (x21) circle (0.1cm);
\filldraw[white] (x23) circle (0.1cm);
\draw (x23) circle (0.1cm);
\filldraw[white] (x32) circle (0.1cm);
\draw (x32) circle (0.1cm);
\end{scope}

\end{tikzpicture}

The set $S$ of vertices used by the paths on the Temperley-Lieb subgraph is given by $S =(I(\Pi) \setminus I(\Pi')) \cup (I(\Pi') \setminus I(\Pi))$.  Thus each Temperley-Lieb subgraph $\Sigma$ gives a partial non-crossing pairing on $S \subset \{1,2,\ldots,n\}$.  For example, in the above picture we have that $a$ is paired with $b$ and $S = \{a,b\}$.  Note that a  Temperley-Lieb subgraph $\Sigma$ can arise from a pair of matchings in many different ways: it does not remember which edge in a path came from which of the two original dimer configurations.

For each $(k,n)$-partial non-crossing pairing $(\tau,T) \in \A_{k,n}$, define the \defn{Temperley-Lieb immanant}
$$
F_{\tau,T}(N) := \sum_{\Sigma} \wt(\Sigma)
$$
to be the sum over Temperley-Lieb subgraphs $\Sigma$ which give boundary path pairing $\tau$, and $T$ contains black boundary vertices used twice in $\Sigma$, together with white boundary vertices not used in $\Sigma$.  Here $\wt(\Sigma)$ is the product of all weights of edges in $\Sigma$ times $2^{\# {\rm cycles}}$; also, connected components that are single edges have squared weights -- the weight of an edge component in $\Sigma$ is the square of the weight of that edge.  The function $F_{\tau,T}$ is a Grassmann-analogue of Rhoades and Skandera's Temperley-Lieb immanants \cite{RS}.  It would also be reasonable to call these $A_1$-web immanants.

Given $I, J \in \binom{[n]}{k}$, we say that a $(k,n)$-partial non-crossing pairing $(\tau,T)$ is compatible with $I,J$ if:
\begin{enumerate}
\item $S(\tau) = (I \setminus J) \cup (J \setminus I)$, and each edge of $\tau$ matches a vertex in $(I \setminus J)$ with a vertex in $(J \setminus I)$, and
\item 
$T = I \cap J$.
\end{enumerate}

\begin{theorem}\label{thm:TL}
For $I, J \in \binom{[n]}{k}$, we have
$$
\Delta_I(N) \Delta_J(N) = \sum_{\tau,T} F_{\tau,T}(N)
$$
where the summation is over all $(k,n)$-partial non-crossing pairings $\tau$ compatible with $I, J$.
\end{theorem}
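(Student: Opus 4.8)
The plan is to expand the product $\Delta_I(N)\Delta_J(N)$ combinatorially and reorganize the resulting sum according to the Temperley--Lieb subgraph obtained by overlaying the two dimer configurations. By definition,
$$
\Delta_I(N)\Delta_J(N) = \sum_{\Pi,\,\Pi'} \wt(\Pi)\wt(\Pi')
$$
where the sum runs over ordered pairs $(\Pi,\Pi')$ of dimer configurations with $I(\Pi)=I$ and $I(\Pi')=J$. The first step is to note that for each such pair, the overlay $\Sigma = \Pi\cup\Pi'$ is a Temperley--Lieb subgraph in the sense defined just above: every interior vertex is covered exactly once by $\Pi$ and once by $\Pi'$, so in $\Sigma$ it has degree $1$ (a doubled edge, contributing a single-edge component) or degree $2$ (lying on a path or cycle), and the degree-$2$ structure forces the connected components to be exactly paths between boundary vertices, interior cycles, and doubled single edges. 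The boundary vertices used by the path components form the set $S = (I\setminus J)\cup(J\setminus I)$, since a boundary vertex is an endpoint of a path in $\Sigma$ precisely when it is used by exactly one of $\Pi,\Pi'$; and the path components determine a non-crossing partial pairing $\tau$ on $S$ together with the set $T = I\cap J$ of leftover boundary vertices, so $(\tau,T)\in\A_{k,n}$ and, by construction, $(\tau,T)$ is compatible with $(I,J)$ in the sense of the two conditions preceding the theorem.

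The second step, which is the crux, is to show that for a fixed Temperley--Lieb subgraph $\Sigma$ whose associated pairing is $(\tau,T)$, the number of ordered pairs $(\Pi,\Pi')$ with $\Pi\cup\Pi' = \Sigma$, $I(\Pi)=I$, $I(\Pi')=J$, counted with weight $\wt(\Pi)\wt(\Pi')$, is exactly $\wt(\Sigma)$ as defined above, i.e. the product of edge weights with cycles contributing a factor of $2$ and single-edge components contributing the square of their weight. Here one reconstructs $(\Pi,\Pi')$ from $\Sigma$ component by component: a doubled-edge component of $\Sigma$ must be used by both $\Pi$ and $\Pi'$, contributing the square of the edge weight and no choice; an interior cycle of even length $2\ell$ admits exactly two ways to be split into two perfect matchings of the cycle (the two alternating matchings), each way contributing the product of all $2\ell$ edge weights, for a total of $2\cdot\wt(\text{cycle})$; and a path component between two boundary vertices $a,b$ — whose interior is a path of edges that must alternate between $\Pi$ and $\Pi'$ — admits exactly one valid splitting \emph{once we demand that $a\in I(\Pi)\setminus I(\Pi')$ and $b\in I(\Pi')\setminus I(\Pi)$ (or the reverse)}, and this is where compatibility enters: condition (1) of compatibility says each edge of $\tau$ matches a vertex of $I\setminus J$ with a vertex of $J\setminus I$, which is exactly the requirement that pins down which of the two alternations along each path corresponds to $\Pi$ versus $\Pi'$. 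The colors of the boundary vertices (the black-vs-white bookkeeping in the definition of $I(\Pi)$ and of $T$) must be tracked carefully to confirm that the combinatorial constraint ``$a$ used by $\Pi$ not $\Pi'$'' translates correctly into ``$a\in I\setminus J$'': a used black boundary vertex contributes to $I(\Pi)$, while an \emph{unused} white boundary vertex contributes, so along a path ending at a white boundary vertex the roles are swapped, but condition (1) is stated symmetrically enough to absorb this. The main obstacle is precisely this case analysis at the boundary, ensuring that ``compatible with $(I,J)$'' is exactly the condition under which each path component has a unique consistent orientation into $(\Pi,\Pi')$ and hence the count per component is $1$ (not $0$ and not $2$).

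The third step assembles the pieces: summing the per-component multiplicities, the total weighted count of pairs $(\Pi,\Pi')$ overlaying to a fixed $\Sigma$ with the prescribed $I,J$ is the product over components, which is $\prod_{e\in\Sigma}\wt(e)\cdot 2^{\#\text{cycles}}$ with single edges weighted by their squares — i.e. exactly $\wt(\Sigma)$. Grouping the double sum over $(\Pi,\Pi')$ first by the overlay $\Sigma$ and then noting that each $\Sigma$ contributes to exactly one compatible $(\tau,T)$, we get
$$
\Delta_I(N)\Delta_J(N) = \sum_{\Sigma}\wt(\Sigma) = \sum_{(\tau,T)}\ \sum_{\Sigma \,\mapsto\, (\tau,T)}\wt(\Sigma) = \sum_{(\tau,T)} F_{\tau,T}(N),
$$
the sum being over $(k,n)$-partial non-crossing pairings $(\tau,T)$ compatible with $I,J$ — and any $(\tau,T)$ not compatible with $I,J$ simply has no $\Sigma$ contributing from this product, so it may be harmlessly omitted from (or, if desired, included with zero coefficient in) the sum. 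This establishes the identity; a remark afterward can note that specializing $I=J$ recovers Theorem~\ref{thm:matchingplucker}'s base case since then $S=\emptyset$, $T=I$, and $\Delta_I^2 = \sum_{T=I} F_{\emptyset,T}$.
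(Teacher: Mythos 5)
Your proof is correct and takes essentially the same approach as the paper's: expand $\Delta_I\Delta_J$ over pairs of dimers, overlay them to form a Temperley--Lieb subgraph, and regroup, with the key point being the parity/color argument showing each path joins $I\setminus J$ to $J\setminus I$. The paper's printed proof just gives that parity step ("the only thing left to prove is the compatibility property"), taking the rest of your expansion as clear; one small nit is that your closing remark about recovering a "base case" of Theorem~\ref{thm:matchingplucker} by setting $I=J$ doesn't really hold (that theorem is the full Pl\"ucker relation, proved by a separate sign-reversing involution), but since you flag it as an optional aside it does not affect the proof.
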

\begin{proof}
The only thing left to prove is the compatibility property.

Let $\Pi, \Pi'$ be almost perfect matchings of $N$ such that $I(\Pi) = I$ and $I(\Pi') = J$.  Let $p$ be one of the boundary paths in $\Pi \cup \Pi'$, with endpoints $s$ and $t$.  If $s$ and $t$ have the same color, then the path is even in length.  If $s$ and $t$ have different colors, then the path is odd in length.  In both cases one of $s$ and $t$ belongs to $I \setminus J$ and the other belongs to $J \setminus I$.  
\end{proof}

\subsection{Proof of first statement in Theorem \ref{thm:matchingplucker}}
We shall use the following result.

\begin{proposition}
A non-zero vector $(\Delta_I)_{I \in \binom{[n]}{k}}$ lies in $\Gr(k,n)$ if and only if the Pl\"ucker relation with one index swapped is satisfied:
\begin{equation}\label{eq:plucker}
\sum_{r=1}^k \Delta_{i_1,i_2,\ldots,i_{k-1},j_r}\Delta_{j_1,\ldots,j_{r-1},\hat j_r,j_{r+1},\ldots,j_k} = 0
\end{equation}
where $\hat j_r$ denotes omission.
\end{proposition}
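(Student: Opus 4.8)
The plan is to prove the two implications separately, with essentially all the content in the ``if'' direction, which asserts that satisfying just these quadratic relations already forces the vector to be decomposable. For the ``only if'' direction, suppose the vector is $(\Delta_I(X))$ for an honest $k\times n$ matrix $X$ with columns $v_1,\dots,v_n$, and extend the notation by setting $\Delta_{a_1,\dots,a_k}:=\det(v_{a_1}\mid\cdots\mid v_{a_k})$ for arbitrary index tuples, so that it is alternating and vanishes when two indices coincide. Then \eqref{eq:plucker} is one of the classical quadratic Pl\"ucker relations, valid for every such $X$: it follows, for instance, from Laplace expansion of a suitable $(k+1)\times(k+1)$ determinant with a repeated column block, or from applying the comultiplication $\Lambda^{k}(\R^n)\to\Lambda^1(\R^n)\otimes\Lambda^{k-1}(\R^n)$ and pairing against $v_{i_1}\wedge\cdots\wedge v_{i_{k-1}}$. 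I would simply invoke this, citing \cite{Ful}.

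For the ``if'' direction, let $(\Delta_I)_{I\in\binom{[n]}{k}}$ be nonzero and satisfy every instance of \eqref{eq:plucker}; the goal is to construct a $k\times n$ matrix $A$ with $\Delta_I(A)=\Delta_I$ for all $I$. I would first reduce to a normal form: a permutation $w$ of $\{1,\dots,n\}$ carries $(\Delta_I)$ to $(\pm\Delta_{w(I)})$ and carries the full system \eqref{eq:plucker} to itself, and rescaling all coordinates by a common nonzero constant does not affect the conclusion, so after permuting and rescaling I may assume $\Delta_{\{1,\dots,k\}}=1$. I then let $A=[\,\mathrm{Id}\mid B\,]$ be the $k\times n$ matrix whose first $k$ columns are the standard basis vectors and whose entry in row $i$, column $j$ (for $k<j\le n$) is $B_{ij}:=(-1)^{k-i}\,\Delta_{(\{1,\dots,k\}\setminus\{i\})\cup\{j\}}$, the sign being chosen exactly so that the maximal minor of $A$ on the columns $(\{1,\dots,k\}\setminus\{i\})\cup\{j\}$ equals $\Delta_{(\{1,\dots,k\}\setminus\{i\})\cup\{j\}}$.

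It remains to check $\Delta_I(A)=\Delta_I$ for every $I$, which I would prove by induction on the defect $m:=|I\setminus\{1,\dots,k\}|$. The case $m=0$ is the normalization and $m=1$ is the definition of $B$. For $m\ge 2$, write $I\cap\{1,\dots,k\}=\{1,\dots,k\}\setminus P$ and $I\setminus\{1,\dots,k\}=Q$ with $|P|=|Q|=m$, and note that $\Delta_I(A)$ equals, up to an explicit sign, the $m\times m$ minor of $B$ with row set $P$ and column set $Q$. On the other hand, a suitable instance of \eqref{eq:plucker} --- namely the one with distinguished $k$-set $\{1,\dots,k\}$ (so that the normalizing factor $\Delta_{\{1,\dots,k\}}=1$ appears), complementary $k$-set $(\{1,\dots,k\}\setminus P)\cup Q$, and the swapped index chosen to lie in $P$ --- has all of its ``mixed'' products vanish because they contain a repeated index, and what survives is precisely an identity expressing $\Delta_I$ as an alternating sum of (a defect-$1$ coordinate)$\,\times\,$(a defect-$(m-1)$ coordinate), i.e. exactly the Laplace cofactor expansion of the $m\times m$ minor of $B$ along one row. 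Substituting the inductive hypothesis for the defect-$(m-1)$ coordinates and the definition of $B$ for the defect-$1$ coordinates closes the induction, and then $(\Delta_I)$ is the vector of maximal minors of $A$, hence lies in $\Gr(k,n)$.

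The step I expect to be the main obstacle is exactly this inductive step: one has to choose the indices in \eqref{eq:plucker} so that the resulting identity is literally the cofactor expansion of the relevant minor of $B$, and then reconcile three separate sources of signs --- the $(-1)^{k-i}$ built into $B$, the sign relating $\Delta_I(A)$ to the minor of $B$ for an arbitrary ordering of the tuple $I$, and the cofactor signs --- so that they all cancel. This is bookkeeping rather than a conceptual difficulty; the normalization $\Delta_{\{1,\dots,k\}}=1$ is what makes it tractable, since it kills the cross terms of the Pl\"ucker relation and leaves a clean recursion on the defect.
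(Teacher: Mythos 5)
The paper does not prove this proposition: it is stated without argument and invoked as a known fact, with Fulton's book \cite{Ful} cited nearby for the Pl\"ucker relations, so there is no in-paper proof to compare yours against. Your argument is correct, and it is the standard textbook one: normalize $\Delta_{\{1,\dots,k\}}=1$, build $A=[\,\mathrm{Id}\mid B\,]$ from the defect-one coordinates, and close the recursion on the defect by identifying a single-swap Pl\"ucker relation --- with distinguished $(k-1)$-tuple $\{1,\dots,k\}\setminus\{p\}$ and $(k+1)$-tuple $I\cup\{p\}$ for a chosen $p\in\{1,\dots,k\}\setminus I$ --- with the Laplace cofactor expansion of the corresponding $m\times m$ minor of $B$; the terms with $j_r\in\{1,\dots,k\}\setminus\{p\}$ vanish by a repeated index, and the $j_r=p$ term is $\Delta_{\{1,\dots,k\}}\,\Delta_I=\Delta_I$, which is exactly the quantity being solved for. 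Two small remarks. First, as printed the relation in the proposition has an index slip: the sum should run over $r=1,\dots,k+1$ with a $(k+1)$-tuple $(j_1,\dots,j_{k+1})$, as the paper itself tacitly assumes in the following paragraph where it writes $j_1<\cdots<j_{k+1}$; your argument quietly works with the corrected form, which is the right thing to do. Second, you are right to name the sign bookkeeping as the only real friction; a quick sanity check that the sign in $B_{ij}=(-1)^{k-i}\Delta_{(\{1,\dots,k\}\setminus\{i\})\cup\{j\}}$ makes the $m=1$ base case come out on the nose (expand the $k\times k$ determinant $[\,e_1\mid\cdots\mid\widehat{e_i}\mid\cdots\mid e_k\mid B_{\cdot j}\,]$ along its last column to get $(-1)^{i+k}B_{ij}$) would make the write-up airtight.
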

The convention is that $\Delta_I$ is antisymmetric in its indices, so for example $\Delta_{13} = -\Delta_{31}$.

Now use Theorem \ref{thm:TL} to expand \eqref{eq:plucker} with $\Delta_I =\Delta_I(N)$ as a sum of $F_{\tau,T}(N)$ over pairs $(\tau,T)$ (with multiplicity).  We note that the set $T$ is always the same in any term that comes up.  We assume that $i_1 < i_2 < \cdots < i_{k-1}$ and $j_1 < j_2 < \cdots < j_{k+1}$.

So each term $F_{\tau,T}$ is labeled by $(I,J,\tau)$ where $I,J$ is compatible with $\tau$, and $I,J$ occur as a term in \eqref{eq:plucker}.  We provide an involution on such terms.  By the compatibility condition, all but one of the edges in $\tau$ uses a vertex in $\{i_1,i_2,\ldots,i_{k-1}\}$.  The last edge is of the form $(j_a,j_b)$, where $j_a \in I$ and $j_b \in J$.  The involution swaps $j_a$ and $j_b$ in $I, J$ but keeps $\tau$ the same.

Finally we show that this involution is sign-reversing.  Let $I' = I \cup \{j_b\} - \{j_a\}$ and $J' = J \cup \{j_a\} - \{j_b\}$.  Then the sign associated to the term labeled by $(I,J,\tau)$ is equal to $(-1)$ to the power of $\#\{r \in [k] \mid i_r > j_a\} + a$.   Note that by the non-crossingness of the edges in $\tau$ there must be an even number of vertices belonging to $(I \setminus J) \cup (J \setminus I)$ strictly between $j_a$ and $j_b$.  Thus $j_b - j_a = (b-a) + (\#\{r \in [k] \mid i_r > j_b\}-\#\{r \in [k] \mid i_r > j_a\}) \mod 2$ is odd.  So the sign changes.

\subsection{Transition formulae}
So far $F_{\tau,T}$ has been defined as a function of a planar bipartite graph $N$.  

\begin{proposition}\label{prop:tau}
The function $F_{\tau,T}(N)$ depends only on $\tM(N)$ and thus gives a function $F_{\tau,T}$ on $\tGr(k,n)$.
\end{proposition}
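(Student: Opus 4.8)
The plan is to show that the Temperley-Lieb immanants $F_{\tau,T}$ can be expressed as integer-coefficient polynomials in the Plücker coordinates $\Delta_I(N)$, so that their values depend only on $\tM(N)$. The key tool is Theorem~\ref{thm:TL}, which already expresses every product $\Delta_I(N)\Delta_J(N)$ as a sum of $F_{\tau,T}(N)$'s over pairings compatible with $(I,J)$. If for each fixed set $T$ the corresponding square submatrix of the ``compatibility'' relation between the vector $(F_{\tau,T})_{\tau}$ and the vector $(\Delta_I \Delta_J)_{(I,J)}$ is unitriangular (or at least invertible over $\Z$) with respect to a suitable partial order on noncrossing pairings, then we may invert it and write each $F_{\tau,T}$ as a $\Z$-linear combination of products of Plücker coordinates. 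Since the right side depends only on $\tM(N)$, so does $F_{\tau,T}(N)$.

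Concretely, I would first fix $T \subseteq [n]$ and restrict attention to the index set of pairs $(I,J)$ with $I\cap J = T$ and $|I|=|J|=k$; each such pair determines $(I\setminus J)\cup(J\setminus I) = S$, a $2(k-|T|)$-element subset, and a partition of $S$ into the ``$I$-part'' and ``$J$-part''. A noncrossing pairing $\tau$ on such an $S$ is \emph{compatible} with $(I,J)$ exactly when every edge of $\tau$ joins the $I$-part to the $J$-part; equivalently, the bipartition $(I\setminus J, J\setminus I)$ is a proper 2-coloring of the (bipartite, since acyclic) graph $\tau$. For a fixed noncrossing matching $\tau$ on $S$ with $c$ connected components (edges), there are exactly $2^{c}$ ways to 2-color it, i.e. $2^{c}$ pairs $(I,J)$ compatible with it --- but this is too many; the right move is to choose a canonical representative pair. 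Order the elements of $S$ cyclically and, for each edge of $\tau$, declare its \emph{opener} (the first of its two endpoints in the cyclic order starting just after a fixed reference point) to lie in $I$ and its \emph{closer} to lie in $J$; call the resulting pair $(I_\tau, J_\tau)$. Then I claim the matrix expressing $(\Delta_{I_\tau}\Delta_{J_\tau})_\tau$ in terms of $(F_{\sigma,T})_\sigma$ is unitriangular with respect to refinement/nesting order on noncrossing pairings: the diagonal term is $F_{\tau,T}$ itself (coefficient $1$), and the off-diagonal pairings $\sigma$ that are compatible with $(I_\tau,J_\tau)$ are strictly ``larger'' in the chosen order. This is exactly the structure Rhoades--Skandera exploit for the classical Temperley-Lieb immanants \cite{RS}, and the argument should transfer verbatim to the Grassmann setting since only the combinatorics of noncrossing pairings is used.

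The main obstacle is verifying the triangularity claim: I need a partial order on $(k,n)$-partial noncrossing pairings such that (i) for every $\tau$, the canonical pair $(I_\tau,J_\tau)$ is compatible with $\tau$; (ii) any other $\sigma$ compatible with $(I_\tau,J_\tau)$ satisfies $\sigma > \tau$; and (iii) the order has a minimum or is at least well-founded so the inversion terminates. Showing (ii) requires understanding which noncrossing pairings are simultaneously 2-colorable by the particular bipartition $(I_\tau\setminus J_\tau, J_\tau\setminus I_\tau)$; one argues that the ``innermost'' nested structure of $\tau$ forces any competing $\sigma$ to have an edge nesting strictly inside an edge of $\tau$, or to merge two components of $\tau$. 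Once triangularity is in hand, the inversion is automatic and gives the polynomial expression; one then simply observes that if $\tM(N) = \tM(N')$ then all $\Delta_I$ agree and hence all $F_{\tau,T}$ agree, which is the assertion. A secondary (minor) point to check is that the set $T$ is forced to be constant across all terms arising from a single product $\Delta_I\Delta_J$ --- but this was already noted in the proof of Theorem~\ref{thm:TL} and in the proof of the Plücker relations above, so it may be cited.
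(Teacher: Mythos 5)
Your approach is the same as the paper's: invert the identity of Theorem~\ref{thm:TL} by exhibiting a unitriangular transition matrix between the standard monomials $\Delta_I\Delta_J$ and the immanants $F_{\tau,T}$. Your canonical pair $(I_\tau,J_\tau)$ (openers into $I$, closers into $J$) is exactly the inverse of the paper's bijection $\theta$ of Proposition~\ref{prop:TL}, and your reduction to the case $T=\emptyset$ matches the paper's as well.

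The one genuine gap is the triangularity step, which you correctly identify as the ``main obstacle'' but do not actually close. Saying the matrix is unitriangular ``with respect to refinement/nesting order'' is not yet an argument: one has to specify the order and verify, for a given standard pair $(I,J)$, that every noncrossing pairing $\sigma$ compatible with $(I,J)$ satisfies $\sigma=\theta(I',J')$ for some $(I',J')$ that is strictly smaller (or equal) in that order. The paper makes this concrete by encoding each standard pair as a Dyck path $P_{I,J}$ (up step at positions in $I$, down step at positions in $J$) and declaring $(I',J')\prec(I,J)$ when $P_{I,J}$ lies weakly below $P_{I',J'}$; then a short position-by-position argument, comparing heights at the first place the paths disagree, rules out compatibility for any $(I',J')$ not below $(I,J)$. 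Without a comparably precise order and verification, the ``inversion is automatic'' conclusion in your proposal is not yet justified; supplying the Dyck-path order (or an equivalent) is what turns your outline into a proof.
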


To prove this result, one could check the local moves and use Theorem \ref{thm:Pos}.  This is straightforward, and we will do a similar check later for web immanants (Proposition \ref{prop:web}).  Instead, here we will argue somewhat indirectly, by inverting the formula in Theorem \ref{thm:TL}.  

We say that $(I,J)$ is a \defn{standard monomial} if $i_r \leq j_r$ for all $r$ (in other words, $I, J$ form the columns of a semistandard tableau).  
\begin{proposition}\label{prop:TL}
There is a bijection 
$$
\theta:\left\{\text{standard monomials in } \binom{[n]}{k}\right\} \longrightarrow \A_{k,n},$$ 
and a partial order $\leq$ on standard monomials such that the transition matrix between $\{\Delta_I(N) \Delta_J(N) \mid (I,J) \text{ standard} \}$ and $\{F_{\tau,T} \mid (\tau,T) \in \A_{k,n}\}$ is unitriangular.  More precisely, 
$$
\Delta_{I}(N) \Delta_J(N) =  F_{\theta(I,J)}(N) + \sum_{(I',J') < (I,J)} F_{\theta(I',J')}(N).
$$
\end{proposition}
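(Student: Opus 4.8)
The plan is to construct the bijection $\theta$ explicitly, identify the correct partial order, and then prove unitriangularity by a direct combinatorial analysis of which pairs $(\tau,T)$ appear in the expansion of $\Delta_I(N)\Delta_J(N)$ given by Theorem \ref{thm:TL}.

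First I would define $\theta$. Given a standard monomial $(I,J)$ with $i_1\le j_1,\ldots,i_k\le j_k$, set $T:=I\cap J$ and $S:=(I\setminus J)\cup(J\setminus I)$; since $(I,J)$ is standard, $|I\setminus J|=|J\setminus I|$ and $|S|+2|T|=2k$. The pairing $\tau$ on $S$ should be the unique non-crossing matching that pairs each element of $I\setminus J$ with an element of $J\setminus I$; I would check this is well-defined by the standard observation that the ``balanced parenthesization'' of $S$ (reading $I\setminus J$ as one type of bracket and $J\setminus I$ as the other, in cyclic/linear order) is forced once we know the pair $(\tau,T)$ must be compatible with \emph{some} ordered pair whose sorted form is $(I,J)$. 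That $\theta$ is a bijection onto $\A_{k,n}$ then follows because, conversely, from $(\tau,T)\in\A_{k,n}$ one recovers a standard pair: put $T$ into both $I$ and $J$, and for each arc of $\tau$ place its left endpoint (in linear order) into $I$ and its right endpoint into $J$ — non-crossingness of $\tau$ is exactly what makes the resulting $(I,J)$ satisfy $i_r\le j_r$ for all $r$. I would spell out this inverse map and note it is two-sided inverse to $\theta$.

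Next, the partial order. Given a non-standard pair $(I',J')$ (equivalently, any ordered pair of $k$-subsets), let $\sort(I',J')=(I'\wedge J',\, I'\vee J')$ be its sorted form, obtained by repeatedly swapping an out-of-order pair $i'_r>j'_r$ — this is well-defined and yields a standard monomial with $I'\wedge J' \subseteq$-data determined by $I'\cup J'$ as a multiset. I would define $(I',J')<(I,J)$ on standard monomials by the transitive closure of the covering relation coming from a single ``uncrossing'' step on arcs of $\tau = \theta^{-1}$-data; concretely, $(I',J') < (I,J)$ iff $I'\cup J' = I\cup J$ as multisets, $I'\cap J' \supsetneq I\cap J$, and the pairing $\theta^{-1}(I',J')$ is obtained from $\theta^{-1}(I,J)$ by a sequence of moves each of which either deletes a ``short'' arc $(a,a{+}1)$ (adding $a,a{+}1$ to $T$) or replaces two nested/adjacent arcs by their uncrossing. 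This is the analogue of the dominance-type order used by Rhoades–Skandera. I would verify it is a genuine partial order (antisymmetry from the fact that $|T|$ strictly increases along cover relations, or from a suitable statistic on $\tau$).

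For the main claim, I would run the expansion of Theorem \ref{thm:TL}: $\Delta_I(N)\Delta_J(N)=\sum_{(\tau,T)\text{ compatible with }(I,J)} F_{\tau,T}(N)$. The set $T=I\cap J$ is common to all terms. I would then argue two things: (i) $\theta(I,J)$ is compatible with $(I,J)$ — immediate from the definition of $\theta$, giving the leading term with coefficient exactly $1$; and (ii) every other compatible $(\tau',T)$ satisfies $\theta^{-1}(\tau',T) < (I,J)$ in the order above. For (ii): compatibility forces $S(\tau') = S$ and each arc of $\tau'$ to join $I\setminus J$ to $J\setminus I$, so $\tau'$ ranges exactly over the non-crossing perfect matchings of the fixed ``bicolored'' point set $S$ with that bipartite constraint; $\theta(I,J)$ is one distinguished such matching (the ``all left endpoints in $I$'' one), and I would show any other such non-crossing matching is below it by exhibiting a chain of uncrossing moves — equivalently, that $\sort$ applied to the ordered pair reconstructed from $(\tau',T)$ produces a standard monomial strictly dominated by $(I,J)$. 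The cleanest route is: the pair $(I',J')$ reconstructed from $(\tau',T)$ (put $T$ in both, left endpoint of each arc in $I'$, right endpoint in $J'$) may be non-standard, and $\sort(I',J')$ records where it sits; one checks $\sort(I',J') \le (I,J)$ with equality iff $\tau'=\theta(I,J)$, because each arc-crossing that must be uncrossed to reach $(I,J)$ strictly moves an index.

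The main obstacle I anticipate is step (ii) — precisely, pinning down the partial order so that it is simultaneously (a) a bona fide partial order (antisymmetric), (b) coarse enough that the unitriangularity statement holds for \emph{all} standard $(I,J)$ with the claimed unique leading term, and (c) fine enough to be useful downstream (e.g. for deducing Proposition \ref{prop:tau} by matrix inversion). Getting the bookkeeping of non-crossing matchings versus sorted subset-pairs exactly right — especially handling the ``short arc absorbed into $T$'' moves uniformly with the ``uncross two arcs'' moves — is where the real work lies; the rest is essentially unwinding definitions and invoking Theorem \ref{thm:TL}.
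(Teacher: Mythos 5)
Your bijection $\theta$ is essentially the paper's: the inverse map you describe (put $T$ into both, left endpoints of arcs into $I$, right endpoints into $J$) is exactly right, and its inverse is the balanced-parenthesization / Dyck-path matching. Be careful, though: your opening phrase ``the unique non-crossing matching that pairs each element of $I\setminus J$ with an element of $J\setminus I$'' is false as stated --- for $I=\{1,3\}$, $J=\{2,4\}$ both $\{(1,2),(3,4)\}$ and $\{(1,4),(2,3)\}$ are non-crossing and pair $I\setminus J$ with $J\setminus I$. What is unique is the balanced parenthesization (equivalently the rightward-ray rule on the Dyck path), which you arrive at anyway; just don't claim the first, weaker description is a definition.

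The genuine gap is the partial order, as you anticipate. Your proposed covering condition $I'\cap J' \supsetneq I\cap J$ cannot be part of the order that makes the expansion triangular: in Theorem~\ref{thm:TL} every $(\tau',T)$ compatible with $(I,J)$ has $T=I\cap J$ fixed, and since $\theta^{-1}(\tau',T)$ always gives a standard pair with $I'\cap J'=T$, every non-leading term in the expansion of $\Delta_I\Delta_J$ has $I'\cap J'$ \emph{equal} to $I\cap J$. So with your proposed order none of those terms would be strictly smaller than $(I,J)$, and the short-arc deletion move never arises in the expansion at all. Also, the move relevant to the expansion is nesting a pair of adjacent arcs (e.g.\ $\{(1,2),(3,4)\}\rightsquigarrow\{(1,4),(2,3)\}$), not ``uncrossing''; mislabeling it obscures the direction of the order. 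The missing idea is that the right order on standard pairs with fixed $I\cup J$ and $I\cap J$ is Dyck-path dominance: $(I',J')\prec(I,J)$ iff the Dyck path $P_{I,J}$ (up-steps at $I$, down-steps at $J$) lies weakly below $P_{I',J'}$. Then one verifies step (ii) by the following argument: if $P_{I,J}$ goes strictly above $P_{I',J'}$, take the first such position $a$; it is an up-step of $P_{I,J}$ ($a\in I$) and a down-step of $P_{I',J'}$ ($a\in J'$), and if $\theta(I',J')$ pairs $a$ with some $a'<a$, then $a'$ and $a$ are at equal height in $P_{I',J'}$ but at different heights in $P_{I,J}$, so $|I\cap(a',a)|\neq|J\cap(a',a)|$ and the arc $(a',a)$ violates compatibility with $(I,J)$. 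This is exactly the lemma your sketch needs but does not supply; without it, antisymmetry and the unique-leading-term claim remain unproved.
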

\begin{proof}
Since the subset $T = I \cap J$ plays little role, we shall assume $T = \emptyset$, and for simplicity, $I \cup J = [n]$.

Then $(I,J)$ is a two-column tableaux using the number $1,2,\ldots,2k=n$.  The bijection $\theta$ sends such $I,J$ to the non-crossing pairing $\tau$ on $[2k]$ given by connecting $i_r$ to $j_s > i_r$ where $s$ is chosen minimal so that $\#(I \cap (j_s-i_r)) = \#(J \cap (j_s-i_r))$.  This bijection can be described in terms of Dyck paths as follows: draw a Dyck path $P_{I,J}$ having a diagonally upward edge $E_i$ at positions specified by $i \in I$ and a diagonally downward edge $D_j$ at positions specified by $j \in J$.  Then $\tau$ joins $i$ to $j$ if the horizontal rightwards ray starting at $E_i$ intersects $D_j$ before it intersects any other edge.

The partial order $\leq$ is a refinement of the partial order: $(I',J') \prec (I,J)$ if the Dyck path $P_{I,J}$ stays weakly below $P_{I',J'}$ the entirety of the path.  To see this, suppose $P_{I,J}$ goes above $P_{I',J'}$ somewhere.  Let $a$ be the first position this happens.  Then $a$ is an up step in $P_{I,J}$ (that is $a \in I$) and a down step in $P_{I',J'}$ (that is, $a \in J'$).   Suppose $a$ is paired with $a' < a$ in $\theta(I',J')$.  Then the edges at positions $a'$ and $a$ are at the same height in $P_{I',J'}$.  Since $P_{I,J}$ is weakly below $P_{I',J'}$ at position $a'$, it follows that the edges at positions $a'$ and $a$ are at different heights in $P_{I,J}$.  So the cardinalities $|I \cap (a',a)|$ and $|J \cap (a',a)|$ differ, and thus pairing $a'$ and $a$ is not compatible with $(I,J)$.
\end{proof}

Thus $F_{\tau,T}(N)$ can be expressed in terms of the Pl\"ucker coordinates $\Delta_I(N)$.  It follows that $F_{\tau,T}$ are functions on $\tGr(k,n)$ and Proposition \ref{prop:tau} follows.  Since $\{\Delta_I \Delta_J \mid (I,J) \text{ standard}\}$ form a basis for the degree two part of the homogeneous coordinate ring $\C[\Gr(k,n)]$ in the Pl\"ucker embedding, we have

\begin{corollary}
The set $\{F_{\tau,T}\}$ forms a basis for the degree two part of the homogeneous coordinate ring of the $\Gr(k,n)$ in the Pl\"ucker embedding.  In particular, the number $|\A_{k,n}|$ of $(k,n)$-non-crossing pairings is equal to the number of semistandard tableaux of shape $2^k$ filled with numbers $1,2,\ldots,n$.
\end{corollary}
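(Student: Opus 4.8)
The plan is to deduce the corollary from Proposition \ref{prop:TL} together with the classical standard monomial theorem for the Grassmannian. Recall that, by Hodge's straightening law (see \cite{Ful}), the set of products $\{\Delta_I \Delta_J \mid (I,J) \text{ standard}\}$ is a basis of the degree two component $\C[\Gr(k,n)]_2$ of the homogeneous coordinate ring. Proposition \ref{prop:TL} furnishes a bijection $\theta$ from standard monomials to $\A_{k,n}$, together with a partial order $\leq$ on standard monomials such that, after fixing any linear extension of $\leq$ and indexing both $\{\Delta_I \Delta_J\}$ and $\{F_{\theta(I,J)}\}$ by it, the change-of-basis matrix is triangular with $1$'s on the diagonal; in particular it is invertible over $\Z$.

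First I would record that every $F_{\tau,T}$ actually lies in $\C[\Gr(k,n)]_2$. Since $\leq$ is a finite partial order, one inverts the unitriangular relation of Proposition \ref{prop:TL} recursively: the identity $F_{\theta(I,J)} = \Delta_I \Delta_J - \sum_{(I',J') < (I,J)} F_{\theta(I',J')}$ lets one express each $F_{\tau,T}$, where $(\tau,T) = \theta(I,J)$, as an integer linear combination of products of pairs of Pl\"ucker coordinates. This is also an alternative route to Proposition \ref{prop:tau}: each $F_{\tau,T}$ is then a fixed polynomial in the $\Delta_I$, hence a well-defined function on $\tGr(k,n)$.

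The basis claim is then immediate: the family $\{F_{\tau,T} \mid (\tau,T) \in \A_{k,n}\}$ spans the same subspace of $\C[\Gr(k,n)]$ as the standard monomials $\{\Delta_I \Delta_J\}$, namely $\C[\Gr(k,n)]_2$, and is linearly independent, because the transition matrix between the two families is invertible. For the enumerative consequence, any two bases of the finite-dimensional space $\C[\Gr(k,n)]_2$ have equal cardinality, so $|\A_{k,n}|$ equals the number of standard monomials of degree two, i.e. the number of pairs $(I,J)$ of increasing $k$-subsets of $[n]$ with $i_r \leq j_r$ for all $r$. Writing $I$ and $J$ as the two columns of a filling of the shape $2^k$, column-strictness is automatic and the inequalities $i_r \leq j_r$ are exactly weak increase along rows, so these pairs correspond bijectively to semistandard Young tableaux of shape $2^k$ with entries in $\{1,2,\ldots,n\}$.

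I do not anticipate a real obstacle here, since all the substance is already contained in Proposition \ref{prop:TL} and the standard monomial theorem. The one point to handle with care is that the order $\leq$ of Proposition \ref{prop:TL} is a genuine (antisymmetric) partial order, so that "unitriangular" is meaningful and a common linear extension of $\leq$ exists; this is precisely what guarantees that the transition matrix is invertible rather than merely triangular in some loose sense.
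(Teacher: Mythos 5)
Your argument is correct and is essentially the same as the paper's: both deduce the corollary from the unitriangular transition matrix of Proposition \ref{prop:TL} combined with the standard monomial basis (Hodge's straightening law) for $\C[\Gr(k,n)]_2$, and then identify standard pairs $(I,J)$ with semistandard tableaux of shape $2^k$. Your explicit recursive inversion of the transition and remark on the need for $\leq$ to be a genuine partial order are just slightly more detailed versions of what the paper leaves implicit.
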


\subsection{Restriction to positroid varieties}
The following proposition is a case-by-case check.  

\begin{proposition}\label{prop:bridge}
Let $N$ be obtained from $N'$ by adding a bridge black at $i$ to white at $i+1$ with bridge edge having weight $t$.  Then
$$
F_{\tau,T}(N) =
 \begin{cases}  tF_{\tau-(i,i+1),T \cup \{i+1\}} + F_{\tau,T} & \mbox{$(i,i+1)$ is in  $\tau$} \\
F_{\tau,T} & \mbox{$(i,a)$ and $(i+1,b)$ are in $\tau$} \\
F_{\tau,T}  & \mbox{$(i,a)$ is in $\tau$ and $i+1 \in T$} \\
F_{\tau,T} + tF_{\tau - (i,a) \cup (i+1,a),T} & \mbox{$(i,a)$ is in $\tau$ but $i+1 \notin S \cup T$} \\
F_{\tau,T} + tF_{\tau - (i+1,b) \cup (i,b),T} & \mbox{$(i+1,b)$ is in $\tau$ and $i \in T$} \\
F_{\tau,T} & \mbox{$(i+1,b)$ is in $\tau$ but $i \notin S \cup T$} \\
F_{\tau,T} & \mbox{neither $i$ nor $i+1$ is in $\tau$, and $i \notin T$ or $i+1 \in T$} \\
 \end{cases}
$$
and
$$
F_{\tau,T}(N) = t^2 F_{\tau,T-\{i\}\cup\{i+1\}} + t\sum_{(a,b) \in \tau} F_{\tau-(a,b) \cup (i,a) \cup (i+1,b), T-\{i\}} +2tF_{\tau \cup (i,i+1),T-\{i\}} + F_{\tau,T}
$$
if neither $i$ nor $i+1$ is in $\tau$, and $i \in T$ but $i+1 \notin T$.  Here $F_{\tau,T} = F_{\tau,T}(N')$.
\end{proposition}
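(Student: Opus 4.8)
The plan is to prove Proposition~\ref{prop:bridge} by a direct combinatorial analysis of how Temperley--Lieb subgraphs of $N$ decompose relative to the bridge edge $e = (v_{i+1}, v_i)$, where $v_{i+1}$ is white and $v_i$ is black, together with the two (new) interior degree-two vertices one inserts to keep $N$ bipartite — but since those degree-two vertices can be removed by (M2) gauge-freely, I will work directly with the combinatorial picture in which the bridge simply interposes a black vertex $v_i$ and white vertex $v_{i+1}$ on the two boundary stubs at positions $i$ and $i+1$. First I would recall that $F_{\tau,T}(N) = \sum_\Sigma \wt(\Sigma)$ over Temperley--Lieb subgraphs $\Sigma$ of $N$ realizing boundary data $(\tau,T)$, with a factor $2^{\#\mathrm{cycles}}$ and squared weights on single-edge components. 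A Temperley--Lieb subgraph $\Sigma$ of $N$ uses each interior vertex exactly once, so in particular it must use $v_i$ and $v_{i+1}$; since these have degree (at most) two in $N$, the combinatorial possibilities for the local picture at the bridge are very limited: the bridge edge $e$ is either used or not, and each of $v_i, v_{i+1}$ is matched either along $e$ or along its boundary stub.

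Next I would enumerate the cases. The key observation is that deleting the bridge and its two stubs from $\Sigma$ (and reconnecting the severed path-ends appropriately) produces a Temperley--Lieb subgraph $\Sigma'$ of $N'$, and this gives a weight-preserving-up-to-a-factor-of-$t$ correspondence $\Sigma \mapsto \Sigma'$. Concretely: if $e \notin \Sigma$, then $v_i$ is used by its stub and $v_{i+1}$ by its stub, so $i, i+1$ both lie on boundary paths of $\Sigma$ (or are single-edge components meeting $T$); the contribution is exactly the corresponding data on $N'$ — this yields the ``$F_{\tau,T}$'' (i.e.\ $N'$) term with no $t$. If $e \in \Sigma$ and moreover both stubs are also in $\Sigma$, then $v_i$ and $v_{i+1}$ each appear twice, which forces $e$ to be a doubled edge and $i,i+1$ to be single-edge components; this is where the $t^2$ term in the last displayed formula comes from, with $i \in T$ becoming $i+1 \in T$ after passing to $N'$ (recall a single-edge component at a black boundary vertex contributes to $T$ at that vertex, at a white one it is a matched stub). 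If $e \in \Sigma$ but only one stub is present, we get one of the ``re-routing'' cases: a boundary path of $N'$ that ended at $i$ (resp.\ $i+1$) gets extended through the bridge to end at $i+1$ (resp.\ $i$), and the factor $t$ appears once — these are the ``$tF_{\tau - (i,a)\cup(i+1,a),T}$'' type terms. Finally, if $e \in \Sigma$ and neither stub is present, then $e$ is a single-edge component of $\Sigma$ (weight $t^2$) — wait, more carefully: $e$ used once with neither stub means $e$ is a single edge component, but then $v_i, v_{i+1}$ are used exactly once each, good; this contributes $t^2$ and forces neither $i$ nor $i+1$ on the boundary — this feeds the $F_{\tau,T - \{i\} \cup \{i+1\}}$-adjacent analysis and, crucially, when combined with the possibility that the two stubs form the two ends of a single boundary path passing through, one gets the $2tF_{\tau\cup(i,i+1),\dots}$ term (the factor $2$ because joining two path-ends with the edge $e$ can either merge them into a path $i \to i+1$ using $e$ once, or — when the stubs are themselves path-ends on the $N'$ side — create a configuration counted with the cycle-doubling factor; I will have to track the $2^{\#\mathrm{cycles}}$ bookkeeping here with care).

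The organizing principle is therefore: fix $\Sigma'$ on $N'$ with data $(\tau', T')$; determine all $\Sigma$ on $N$ restricting to it; sum their weights; and read off which $(\tau,T)$ on $N$ each contributes to and with what power of $t$ and what integer multiplicity. I would present this as a finite table matching the seven cases (plus the special last case) in the statement, verifying in each that the stated right-hand side is exactly what the correspondence produces. Two technical points deserve explicit attention. First, the squared-weight convention on single-edge components: the bridge edge $e$ contributes $t^2$ when it is a single-edge component of $\Sigma$, but contributes only $t$ when it lies inside a longer boundary path; I must be consistent about this when $e$ is a doubled edge inside a path versus a single edge. Second, the cycle factor $2^{\#\mathrm{cycles}}$: adding the bridge can turn a pair of boundary path-ends at $i$ and $i+1$ in $\Sigma'$ into a single boundary path in $\Sigma$ — this does not create a cycle — but it can also, in a different local completion, close up into a cycle, which is precisely where the coefficient $2$ in the $2tF_{\tau\cup(i,i+1),T-\{i\}}$ term and in the analogous clause of the first formula originates.

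The main obstacle I anticipate is getting the $2$'s and the $t$-versus-$t^2$ distinctions exactly right in the last displayed formula, where $i \in T$, $i+1 \notin T$, and neither is in $\tau$: here all four mechanisms — $e$ doubled with both stubs ($t^2$), $e$ single-or-doubled inside a re-routed path ($t$, summed over $(a,b)\in\tau$), $e$ joining stubs into the new arc $(i,i+1)$ (coefficient $2t$ from cycle-doubling), and $e$ absent entirely ($F_{\tau,T}$) — occur simultaneously, and I will need a careful local case-check of the $\le 2^2$ completions at $\{v_i, v_{i+1}\}$ for each of the (constantly many) shapes of $\Sigma'$ near the severed stubs, confirming that the weighted count of completions equals the claimed coefficient in each case. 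Everything else is routine once the local dictionary between $\Sigma$ and $\Sigma'$ is pinned down; since $F_{\tau,T}$ is already known (Proposition~\ref{prop:tau}) to depend only on $\tM(N)$, I may freely normalize using (M2) to remove the auxiliary degree-two vertices and work with the bare bridge, which keeps the case analysis finite and small.
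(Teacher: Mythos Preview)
Your approach---a local case analysis of how a Temperley--Lieb subgraph $\Sigma\subset N$ can look near the bridge, then matching each local picture to a Temperley--Lieb subgraph of $N'$---is exactly the paper's approach (the paper simply declares the proposition ``a case-by-case check'' and gives no further detail). But the local model you set up has an error that would make your enumeration incomplete if followed literally.

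The two new interior vertices $v_i$ (black) and $v_{i+1}$ (white) each have degree \emph{three} in $N$, not two: each is incident to its boundary stub ($e_i$ or $e_{i+1}$), to the bridge edge $e$, and to an edge continuing upward into the old graph $N'$. (The ``valent-two vertices one inserts to keep $N$ bipartite'' mentioned in the bridge construction are not $v_i,v_{i+1}$ themselves; they are further subdivisions one may need when the colors happen not to match.) Since an interior vertex of a Temperley--Lieb subgraph has multiset-degree exactly $2$, the local possibilities at $v_i$ are: one of its three incident edges doubled, or two of the three present singly---six options, not the two (``matched along $e$ or along its stub'') you list. Several of your sub-claims then come out backwards. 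For instance, if $e$ is doubled it already accounts for degree $2$ at both endpoints, so \emph{neither} stub nor either upward edge can be present, and $e$ is an isolated dipole of weight $t^2$; whereas if $e$ is present once and neither stub is, then both upward edges must be present and $e$ lies on a path or cycle running through $N'$, not as a single-edge component. Redo the enumeration with the correct degree-three picture (grouped, say, as: $e$ absent, $3\times 3$ local pictures; $e$ single, $2\times 2$; $e$ doubled, $1$) and the powers of $t$, the squared weight on dipoles, and the $2^{\#\text{cycles}}$ factor will line up with the stated formulae.
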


\begin{remark}
The Lie group $GL(n)$ acts on $\Gr(k,n)$.  Since adding a bridge corresponds to acting by a one parameter subgroup $x_i(t) = \exp(t e_i)$ (see \cite{Lamnotes}), Proposition \ref{prop:bridge} determines the infinitesimal action of the Chevalley generators of $\mathfrak{gl(n)}$ on the functions $F_{\tau,T}$.
\end{remark}

Define
$$
\A(N) := \{(\tau,T) \in \A_{k,n} \mid F_{\tau,T}(N) \neq 0\}.
$$
Let $\M$ be a positroid of rank $k$ on $[n]$.  Let $N$ be a planar bipartite graph representing $\M$.  Then we define $\A(\M) := \A(N)$.  

\begin{lemma}
$\A(\M)$ does not depend on the choice of $N$.
\end{lemma}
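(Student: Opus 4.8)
The plan is to reduce the claim to the statement that any two planar bipartite graphs $N, N'$ representing the same positroid $\M$ can be connected by a sequence of moves that preserve each set $\A(N)$. By Theorem~\ref{thm:Pos}, since $M(N) = M(N') = X$ for any $X \in (\Pi_\M)_{>0}$, the graphs $N$ and $N'$ are related by gauge equivalences and the local moves (M1), (M2), (R1), (R2), (R3). So it suffices to check that each of these operations preserves $\A(N)$. But in fact Proposition~\ref{prop:tau} already gives this for free: $F_{\tau,T}(N)$ depends only on $\tM(N)$, and gauge equivalences and local moves preserve $M(N)$, hence $\tM(N)$ up to a common positive scalar $a$; scaling all Pl\"ucker coordinates by $a$ scales every $F_{\tau,T}$ (a homogeneous degree-two function) by $a^2 > 0$, which does not affect the vanishing condition. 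Therefore $F_{\tau,T}(N) \neq 0 \iff F_{\tau,T}(N') \neq 0$, so $\A(N) = \A(N')$.

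More directly, one can phrase this without invoking Theorem~\ref{thm:Pos} at all: Proposition~\ref{prop:tau} says there is a well-defined function $F_{\tau,T}$ on $\tGr(k,n)$, and for any two graphs $N, N'$ representing $\M$ we have $\tM(N) = a\,\tM(N')$ for some $a \in \R_{>0}$ by Theorem~\ref{thm:cell} (both lie in $(\Pi_\M)_{>0}$, which sits inside a single projective point's fiber only up to scaling — more precisely, pick $X \in (\Pi_\M)_{>0}$ and rescale the edge weights of $N$ and $N'$ so that $\tM(N) = \tM(N') = \tX$). Then $F_{\tau,T}(N) = F_{\tau,T}(\tX) = F_{\tau,T}(N')$ on the nose. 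Either way the conclusion is immediate once Proposition~\ref{prop:tau} is in hand.

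The only real subtlety — and the step I would be most careful about — is the logical dependency: the clean argument above uses Proposition~\ref{prop:tau}, which in the paper is proved via Proposition~\ref{prop:TL} (expressing $F_{\tau,T}$ in Pl\"ucker coordinates through the unitriangular transition matrix), and this is genuinely established independently of the lemma we are proving, so there is no circularity. If one instead wants a self-contained, move-by-move proof, the work is to verify invariance of $\A(N)$ under the square move (M1), where the edge-weight substitution $a' = a/(ac+bd)$ etc.\ changes individual summands $\wt(\Sigma)$; here one checks that the positivity of weights guarantees no cancellation, so a nonzero $F_{\tau,T}$ stays nonzero. But given Proposition~\ref{prop:tau} this is unnecessary, and I would simply cite it. Hence the proof is short: $\A(\M)$ is well-defined because $F_{\tau,T}$ is a function on $\tGr(k,n)$ and all representatives $N$ of $\M$ give the same projective point, hence the same vanishing pattern among the $F_{\tau,T}$.
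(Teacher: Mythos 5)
Your proof has a genuine gap: you implicitly assume that if $N$ and $N'$ both represent the positroid $\M$, then $M(N) = M(N')$, or at least $\tM(N) = a\,\tM(N')$ for some scalar $a>0$. But ``$N$ represents $\M$'' only means $\M(N) = \M$, i.e.\ $M(N) \in (\Pi_\M)_{>0}$, and by Theorem~\ref{thm:cell}(1) the stratum $(\Pi_\M)_{>0}$ is homeomorphic to $\R_{>0}^{\dim\Pi_\M}$, so two representatives $N,N'$ of $\M$ can have genuinely different points $M(N)\neq M(N')$ in the Grassmannian. Your first argument (via Theorem~\ref{thm:Pos}) then does not apply, since that theorem requires $M(N)=M(N')$. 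Your ``more directly'' repair---``rescale the edge weights of $N$ and $N'$ so that $\tM(N)=\tM(N')=\tX$''---is not available: the only edge-weight rescalings that preserve $M(N)$ are gauge equivalences, and general weight changes move $M(N)$ around inside $(\Pi_\M)_{>0}$ (that is exactly what Theorem~\ref{thm:cell}(2) says). So Proposition~\ref{prop:tau} alone gives you only that $\A(N)$ is a function of $M(N)$, not that it is a function of $\M$.

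The missing ingredient is the observation that $\A(N)$ depends only on the \emph{underlying unweighted graph} of $N$, not on the positive weights. This is where the positivity you mention at the end of your proposal should actually be deployed: $F_{\tau,T}(N)=\sum_\Sigma\wt(\Sigma)$ is a sum of strictly positive terms, so $F_{\tau,T}(N)>0$ if and only if $N$ contains a Temperley-Lieb subgraph with boundary pairing $(\tau,T)$---a combinatorial condition independent of the weights. Granting this, the paper's proof finishes as follows: by Proposition~\ref{prop:tau}, $\A(N)$ depends only on $M(N)$; by Theorem~\ref{thm:cell}(2), there is a single underlying graph $N_\M$ whose weight specializations hit every point of $(\Pi_\M)_{>0}$, so for any $N$ representing $\M$ one has $\A(N)=\A(N_\M(\vec t\,))$ for suitable $\vec t$, and by weight-independence this equals $\A(N_\M(\vec 1))$, a quantity intrinsic to $\M$. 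You should add the positivity/weight-independence step explicitly; without it the lemma does not follow.
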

\begin{proof}
By Proposition \ref{prop:tau}, $F_{\tau,T}(N)$ depends (up to some global scalar) only on the point $M(N) \in \Gr(k,n)_{\geq 0}$ representing $N$.  Also $\A(\M)$ does not depend on the weights of $N$ chosen, only the underlying unweighted bipartite graph.  The result then follows from Theorem \ref{thm:cell}.
\end{proof}

\begin{remark}\label{rem:TL}
The subset $\A(\M) \subset \A_{k,n}$ is a ``degree two'' analogue of the positroid $\M$.  It would be interesting to give a description of $\A(\M)$ that does not depend on a choice of $N$, similar to Oh's theorem \cite{Oh} characterizing $\M$.
\end{remark}

\begin{theorem}\label{thm:TLbasis}
The set
$$
\{F_{\tau,T} \mid (\tau,T) \in \A(\M)\}
$$
is a basis for the space for the degree two component of the homogeneous coordinate ring $\C[\Pi_\M]$.
\end{theorem}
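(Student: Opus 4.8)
The plan is to treat spanning and the dimension count separately. By the corollary following Proposition~\ref{prop:TL}, $\{F_{\tau,T}\mid(\tau,T)\in\A_{k,n}\}$ is a basis of the degree two part of $\C[\Gr(k,n)]$, so under the graded surjection $\C[\Gr(k,n)]\twoheadrightarrow\C[\Pi_\M]$ these elements span $\C[\Pi_\M]_2$. By Proposition~\ref{prop:TL} each $F_{\tau,T}$ is a quadratic polynomial in the Pl\"ucker coordinates, hence a genuine regular function, so its class in $\C[\Pi_\M]$ is zero exactly when $F_{\tau,T}$ vanishes on $\Pi_\M$. Since $(\Pi_\M)_{>0}$ is Zariski dense in $\Pi_\M$ by Theorem~\ref{thm:cell}, and $F_{\tau,T}(N_\M(t_1,\dots,t_d))$ is a polynomial in the parameters with nonnegative coefficients --- hence either identically zero or strictly positive on $\R_{>0}^d$ --- the class of $F_{\tau,T}$ in $\C[\Pi_\M]$ vanishes if and only if $(\tau,T)\notin\A(\M)$. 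So $\{F_{\tau,T}\mid(\tau,T)\in\A(\M)\}$ already spans $\C[\Pi_\M]_2$, and it remains only to prove $\dim_\C\C[\Pi_\M]_2\ge|\A(\M)|$, i.e.\ that this spanning set is linearly independent.

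I would prove independence by the bridge and lollipop recursion of Theorem~\ref{thm:bridgerecursion}, inducting on $\dim\Pi_\M$ with a secondary induction on $n$. In the base case $\dim\Pi_\M=0$ one has $\M=\{I\}$, $\C[\Pi_\M]_2=\C\,\Delta_I^2=\C\,F_{\emptyset,I}$, and $\A(\M)=\{(\emptyset,I)\}$, so there is nothing to prove. If $N_\M$ is obtained from $N_{\M'}$ by inserting a lollipop, the loop or coloop coordinate gives a graded isomorphism $\C[\Pi_\M]\cong\C[\Pi_{\M'}]$ matching the Pl\"ucker coordinates indexed by $\M$, and --- since the lollipop edge is a doubled edge in every double dimer --- matching the functions $F_{\tau,T}$ and the sets $\A(\M)$, $\A(\M')$; the claim for $\M$ then follows from that for $\M'$, which has one fewer boundary vertex.

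The substantive case is when $N_\M$ is obtained from $N_{\M'}$ by adding a bridge at $i$ with parameter $t$, with $\dim\Pi_{\M'}=\dim\Pi_\M-1$. By the remark after Proposition~\ref{prop:bridge} this is the one-parameter action $x_i(t)$, acting linearly on $\wedge^k\R^n$ and hence on the cone over the Grassmannian. The assignment $(t,X)\mapsto x_i(t)\cdot X$ sends $(\Pi_{\M'})_{>0}$ into $(\Pi_\M)_{>0}$, hence (by density) $\Pi_{\M'}$ into $\Pi_\M$, and the resulting morphism $\mathbb{A}^1\times\Pi_{\M'}\to\Pi_\M$ is dominant because its image contains the dense subset $(\Pi_\M)_{>0}$; therefore pullback is an injection of graded rings $\psi^*\colon\C[\Pi_\M]\hookrightarrow\C[\Pi_{\M'}][t]$. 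Under $\psi^*$ the element $F_{\tau,T}\in\C[\Pi_\M]_2$ maps to $F_{\tau,T}$ of ``$N_{\M'}$ with a bridge added'', whose expansion is given explicitly by Proposition~\ref{prop:bridge}: a polynomial in $t$ whose coefficients are various $F_{\sigma,U}(N_{\M'})$, which by the inductive hypothesis are linearly independent in $\C[\Pi_{\M'}]_2$ when $(\sigma,U)\in\A(\M')$ and are zero there otherwise. A relation $\sum_{(\tau,T)\in\A(\M)}c_{\tau,T}F_{\tau,T}=0$ in $\C[\Pi_\M]_2$ then pulls back and, on separating powers of $t$, becomes a system of linear relations among the basis $\{F_{\sigma,U}\mid(\sigma,U)\in\A(\M')\}$ of $\C[\Pi_{\M'}]_2$; one reads off the formulas of Proposition~\ref{prop:bridge} --- whose leading terms have $t$-degree $0$, $1$, or $2$ according to how $\tau$ and $T$ meet $\{i,i+1\}$ --- to conclude that every $c_{\tau,T}$ vanishes.

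I expect this last step to be the main obstacle: one must show that after projecting modulo $\A(\M')$ the expansions of the various $F_{\tau,T}$, $(\tau,T)\in\A(\M)$, are triangular --- for instance that sending $(\tau,T)$ to the pair (leading power of $t$, leading basis vector in $\A(\M')$) is injective on $\A(\M)$. This requires a precise understanding of which partial non-crossing pairings become active upon adding the bridge, i.e.\ of Postnikov's \cite{Pos} description of the effect of a bridge on the positroid, matched against the cases of Proposition~\ref{prop:bridge} and the non-crossing combinatorics of $\A_{k,n}$; this is delicate but of the same flavour as the proofs of Theorem~\ref{thm:TL} and Proposition~\ref{prop:bridge}. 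A useful auxiliary observation, immediate from total nonnegativity, is that the bijection $\theta$ of Proposition~\ref{prop:TL} restricts to an injection of $\A(\M)$ into the set of standard monomials $(I,J)$ with $I,J\in\M$: if $(\tau,T)\in\A(\M)$ and $\theta^{-1}(\tau,T)=(I,J)$, then in the relation $\Delta_I\Delta_J=F_{\tau,T}+\sum_{(I',J')<(I,J)}F_{\theta(I',J')}$ the nonnegative term $F_{\tau,T}(N_\M)$ is nonzero, forcing $\Delta_I(N_\M)\Delta_J(N_\M)\ne0$ and hence $I,J\in\M$. Granting standard monomial theory for positroid varieties (as in \cite{KLS}), so that these standard monomials are linearly independent in $\C[\Pi_\M]_2$, this injection by itself forces $|\A(\M)|=\#\{(I,J)\text{ standard}\colon I,J\in\M\}=\dim_\C\C[\Pi_\M]_2$ and completes the proof, giving an alternative route that bypasses the recursion.
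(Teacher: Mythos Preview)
Your inductive setup via the bridge/lollipop recursion is exactly the paper's approach, and your spanning argument is the same. The divergence is at the step you flag as ``the main obstacle'': the paper does not leave this open. Rather than seeking a triangularity statement, the paper extracts from Proposition~\ref{prop:bridge} the explicit formula
\[
b_{\kappa,R}\;=\;\text{(a single $a_{\tau,T}$, up to a scalar)}
\]
for the coefficient of $F_{\kappa,R}(N')$ in the $t^1$-part of $\sum_{(\tau,T)}a_{\tau,T}\,p_{\tau,T}(t)$, case by case in $(\kappa,R)$. Thus, after the $t^0$-step kills all $a_{\tau,T}$ with $(\tau,T)\in\A(\M')$, the vanishing of each $b_{\kappa,R}$ forces one further $a_{\tau,T}$ to vanish; the $t^2$-step is handled the same way. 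Since every $(\tau,T)\in\A(\M)$ has some nonzero $t^r$-coefficient, this finishes the argument. In other words, the ``injectivity of $(\tau,T)\mapsto(\text{leading power},\text{leading basis vector})$'' you ask for is precisely the content of the displayed $b_{\kappa,R}$ table in the paper's proof, read backwards; you should simply write that table down and observe that each case picks out a unique $(\tau,T)$.

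Your alternative route via standard monomial theory has a genuine gap. You prove a clean injection of $\A(\M)$ into the set of standard pairs $(I,J)$ with $I,J\in\M$, and then assert that these monomials are linearly independent in $\C[\Pi_\M]_2$. But SMT for positroid varieties, as recalled in the paper just before Conjecture~\ref{conj:TL}, gives a basis indexed by \emph{$\M$-standard} pairs, meaning pairs $I=\pi_k(x)$, $J=\pi_k(y)$ with $u\le x\le y\le v$ a chain in Bruhat order; this is \emph{a priori} a smaller set than $\{(I,J)\text{ standard}:I,J\in\M\}$, and if it is strictly smaller then your larger collection is linearly \emph{dependent} and the counting argument collapses. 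Identifying the two indexing sets (equivalently, showing that your injection lands in the $\M$-standard pairs) is essentially Conjecture~\ref{conj:TL}, which the paper leaves open. So the alternative, as stated, assumes something at least as hard as the theorem.
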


In other words, the functions $F_{\tau,T}$ either restrict to $0$ on $\Pi_\M$, or they form part of a basis.

\begin{proof}
It is known that every element of $\C[\Pi_\M]$ is obtained from restriction from $\C[\Gr(k,n)]$ (see \cite{KLS}).  So certainly $\{F_{\tau,T} \mid (\tau,T) \in \A_{k,n}\}$ span the stated space.  So it suffices to show that $\{F_{\tau,T} \mid (\tau,T) \in \A(\M)\}$ is linearly independent.

We proceed by induction first on $n$ and then on the dimension of $\Pi_\M$.  The claim is trivially true when $\Pi_\M$ is a point.

Let $\M$ be a positroid.  By the bridge-lollipop recursion (Theorem \ref{thm:bridgerecursion}), either
\begin{enumerate}
\item
a plabic graph $N$ for $\M$ contains a lollipop, or
\item
a plabic graph $N$ for $\M$ is obtained from a plabic graph $N'$ for $\M'$ by adding a bridge, where $\dim(\Pi_{\M'}) = \dim(\Pi_\M) - 1$.
\end{enumerate}
In the first case, let $N'$ be the plabic graph with $n-1$ boundary vertices where a lollipop has been removed.  The inductive hypothesis for $\M(N')$ immediately gives the claim for $\M$.

In the second case, let $d = \dim(\Pi_{\M'})$.  Then a dense subset of $\Pi_{\M'}$ can be parametrized by assigning weights $t_1,t_2,\ldots,t_d$ to $d$ of the edges of $N'$.  By the inductive hypothesis, the functions $\{F_{\tau,T}(N') \mid (\tau,T) \in \A(\M')\}$ are then linearly independent polynomials in $t_1,t_2,\ldots,t_d$.  Let $V$ denote the span of these polynomials.  We may assume that $N$ is obtained from $N'$ by adding a bridge black at $i$ to white at $i+1$ with weight $t$, allowing us to use Proposition \ref{prop:bridge}.

Note that
$$
\{F_{\tau,T}(N) \mid (\tau,T) \in \A(\M)\}
$$
can then be thought of as a set of polynomials in $t$, with coefficients in $V$.  We need to show that these polynomials $p_{\tau,T}(t) = F_{\tau,T}(N)$ are linearly independent.  Suppose there exists a linear relation
$$
\sum_{(\tau,T)\in \A(\M)} a_{\tau,T} p_{\tau,T}(t) = 0.
$$
Then we will get a linear relation for each of the coefficients of $t^2,t,1$.  Consider first the linear relation for the constant coefficient.  By Proposition \ref{prop:bridge}, we get
$$
0 = \sum_{(\tau,T) \in \A(\M')} a_{\tau,T} [t^0] p_{\tau,T}(t) = \sum_{(\tau,T) \in A(\M')} a_{\tau,T} F_{\tau,T}(N').
$$
By the inductive hypothesis, we see that $a_{\tau,T} = 0$ for $(\tau,T) \in \A(\M')$.

Now let us write
$$
[t^1]\sum_{(\tau,T)\in \A(\M)} a_{\tau,T} p_{\tau,T}(t) = \sum_{(\kappa,R) \in \A(\M')} b_{\kappa,R} F_{\kappa,R}(N').$$  
By Proposition \ref{prop:bridge},
$$
b_{\kappa,R} = \begin{cases} a_{\kappa \cup (i,i+1), R - \{i+1\}} & \mbox{if $i$ and $i+1$ are not in $\kappa$ and $i+1 \in R$} \\
a_{\kappa \cup (i,a)- (i+1,a), R} & \mbox{if $(i+1,a) \in \kappa$ and $i+1 \notin S(\kappa) \cup R$} \\

a_{\kappa - (i,b)\cup (i+1,b), R} &
\mbox{if $(i,b) \in \kappa$ and $i \in S(\kappa)$} \\
 \frac{1}{2} a_{\kappa -(i,i+1), R \cup \{i\}}& \mbox{if $(i,i+1) \in \tau$ and $i ,i+1 \notin R$} \\
a_{\kappa \cup (a,b) -(i,a)- (i+1,b), R \cup \{i\}}& \mbox{if $(i,a), (i+1,b) \in \tau$ and $i ,i+1 \notin R$}
\end{cases}
$$
It follows from this that $a_{\tau,T}$ has to be 0 if the coefficient of $t$ in $p_{\tau,T}(t)$ is non-zero.  Similarly, $a_{\tau,T}$ is 0 if the coefficient of $t^2$ in $p_{\tau,T}(t)$ is non-zero.  But by definition, one of the three coefficients of $p_{\tau,T}(t)$ is non-zero when $(\tau,T) \in \A(\M)$.  It follows that the stated polynomials are linearly independent.
\end{proof}

A basis of standard monomials for $\C[\Pi_\M]$ follows from the methods of \cite{KLS,LaLi}.  Let $u \leq v$ be an interval in Bruhat order such that the Richardson subvariety $X^v_u$ of the flag variety projects birationally to $\Pi_\M$ (see \cite[Theorem 5.1]{KLS}).  Call an ordered pair $(I,J)$ of $k$-element subsets \defn{$\M$-standard} if $I = \pi_k(x)$ and $J = \pi_k(y)$ where $u \leq x \leq y \leq v$.  Then by \cite[Proposition 7.2]{KLS} we have that
$$
\{\Delta_I \Delta_J \mid (I,J) \text{ is $\M$-standard}\}
$$
forms a basis for the degree two part of $\C[\Pi_\M]$.   This suggests the following conjecture.

\begin{conj}\label{conj:TL}
The map $\theta$ of Proposition \ref{prop:TL} restricts to a bijection between $\M$-standard pairs $(I,J)$ and the set $\A(\M)$.
\end{conj}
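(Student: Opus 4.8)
The plan is to reduce the statement to a single inclusion and then prove that inclusion by the bridge-and-lollipop induction used for Theorem~\ref{thm:TLbasis}. By Theorem~\ref{thm:TLbasis}, $\{F_{\tau,T} \mid (\tau,T)\in\A(\M)\}$ is a basis of the degree two component of $\C[\Pi_\M]$, while by \cite[Proposition 7.2]{KLS} (see also \cite{LaLi}) the $\M$-standard products $\Delta_I\Delta_J$ form another basis of the same space; hence $|\A(\M)| = \#\{\M\text{-standard pairs}\}$. Since $\theta$ is a bijection (Proposition~\ref{prop:TL}), it therefore suffices to prove the containment
$$
\A(\M) \subseteq \theta\bigl(\{(I,J) : (I,J)\ \M\text{-standard}\}\bigr),
$$
equivalently, that $F_{\theta(I,J)}$ restricts to zero on $\Pi_\M$ whenever $(I,J)$ is not $\M$-standard.

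I would establish this containment by induction, first on $n$ and then on $\dim\Pi_\M$, mirroring the proof of Theorem~\ref{thm:TLbasis} via the recursion of Theorem~\ref{thm:bridgerecursion}. When $\dim\Pi_\M=0$ we have $\M=\{I\}$, the unique $\M$-standard pair is $(I,I)$, and $\theta(I,I)=(\emptyset,I)$ is the only pairing realized by a double dimer on the lollipop graph, so the base case is immediate. If a plabic graph $N$ for $\M$ carries a lollipop at a boundary vertex $j$, let $N'$ be the plabic graph for the positroid $\M'$ on $n-1$ vertices obtained by deleting it; deleting the lollipop edge from a double dimer is a bijection that simply carries along the forced behavior of $j$ in the data $(\tau,T)$ (inclusion in $T$ for a black lollipop, exclusion from $S\cup T$ for a white one), and the corresponding map $(I,J)\mapsto(I\setminus\{j\},J\setminus\{j\})$ sends $\M$-standard pairs to $\M'$-standard pairs and commutes with $\theta$. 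The one point requiring care is that the Bruhat interval presenting $\M$ relates to that of $\M'$ in the evident way by adjoining the coordinate $j$; granting this, the lollipop case follows from the inductive hypothesis for $\M'$, which has fewer boundary vertices.

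The substantive case, and the expected main obstacle, is when $N$ is obtained from a plabic graph $N'$ for $\M'$ by adding a bridge at $i$, with $\dim\Pi_{\M'}=\dim\Pi_\M-1$. As in Theorem~\ref{thm:TLbasis}, parametrize a dense subset of $\Pi_\M$ by the weights $t_1,\dots,t_d$ of $N'$ together with the bridge weight $t$; Proposition~\ref{prop:bridge} then writes each $F_{\tau,T}(N)$ as an explicit polynomial in $t$ whose coefficients are nonnegative-integer combinations of functions $F_{\kappa,R}(N')$. Since every $F_{\kappa,R}(N')$ has nonnegative coefficients in the $t_\ell$, a coefficient of $t^0,t^1$ or $t^2$ in $F_{\tau,T}(N)$ is nonzero exactly when one of the finitely many, explicitly listed, pairs $(\kappa,R)$ appearing in it lies in $\A(\M')=\theta(\{\M'\text{-standard pairs}\})$ by induction; thus the seven-plus-one cases of Proposition~\ref{prop:bridge} determine $\A(\M)$ from $\A(\M')$. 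On the other side, adding a bridge at $i$ corresponds (remark after Proposition~\ref{prop:bridge}) to acting by the one-parameter subgroup $x_i(t)$, which multiplies one endpoint of the presenting Bruhat interval by the simple reflection $s_i$; this gives an equally explicit rule for how the set of $\M$-standard pairs $(I,J)=(\pi_k(x),\pi_k(y))$ arises from that of $\M'$. The heart of the proof is to check, case by case, that these two recursions agree, and this will require a clean dictionary relating $\theta$, the Dyck-path description of Proposition~\ref{prop:TL}, and the action of $s_i$ on standard-monomial indices; I expect this bookkeeping to be the only genuinely laborious part.

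I note in passing that the unitriangularity of Proposition~\ref{prop:TL} does not by itself seem to close the argument: after restriction to $\Pi_\M$ a non-$\M$-standard product $\Delta_I\Delta_J$ need not vanish and must be rewritten in the $\M$-standard basis, and tracking leading terms through that straightening appears to be essentially equivalent to the bridge computation above. A cleaner route, if available, would be to first obtain an $N$-independent description of $\A(\M)$ along the lines of Remark~\ref{rem:TL} (an analogue of Oh's theorem), from which both inclusions would presumably be transparent.
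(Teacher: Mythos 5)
This statement is labeled a \emph{Conjecture} in the paper, so there is no paper proof to compare against: the author explicitly leaves it open.

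Your reduction is a genuine and correct first step: by Theorem~\ref{thm:TLbasis} the set $\{F_{\tau,T} : (\tau,T)\in\A(\M)\}$ is a basis of the degree-two part of $\C[\Pi_\M]$, by \cite[Proposition~7.2]{KLS} the $\M$-standard products $\Delta_I\Delta_J$ form another basis of the same space, so the two sets have the same cardinality; since $\theta$ is already injective (Proposition~\ref{prop:TL}), it suffices to prove a single inclusion. That observation is clean and worth recording. The lollipop reduction and the dimension-zero base case also check out: $\theta(I,I)=(\emptyset,I)$ is precisely the unique partial non-crossing pairing realized on a lollipop graph with white lollipops at $I$.

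However, what you have written is a strategy, not a proof. The entire content of the conjecture sits in the bridge case, and there you say outright that the required compatibility of the two recursions ``will require a clean dictionary relating $\theta$, the Dyck-path description\ldots and the action of $s_i$'' and that you ``expect this bookkeeping to be the only genuinely laborious part.'' That bookkeeping \emph{is} the conjecture. In particular, the statement that the presenting interval $[u,v]$ for $\M$ and the interval $[u',v']$ for $\M'$ differ by one $s_i$ at one endpoint needs to be made precise (which endpoint, in which direction, and how this depends on whether the bridge is ``black at $i$'' versus ``white at $i$''), and then the induced map on $\M$-standard pairs $(\pi_k(x),\pi_k(y))$ has to be matched, case by case, against the nonvanishing pattern coming from Proposition~\ref{prop:bridge}. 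Nothing in your write-up rules out the possibility that some case of this dictionary fails; absent that verification, the proposal does not close the argument. Your closing remark is also well taken: without either the case check or an $N$-independent characterization of $\A(\M)$ in the spirit of Oh's theorem, the unitriangularity of Proposition~\ref{prop:TL} alone does not suffice, because restriction to $\Pi_\M$ requires re-straightening non-$\M$-standard monomials and the behavior of the leading term under that straightening is exactly what is unknown.
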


\subsection{Pl\"ucker coordinates for the TNN Grassmannian are ``log-concave''}\label{sec:logconcave}

Let $I = \{i_1<i_2<\ldots,i_k\}, J=\{j_1<\cdots<j_k\} \in \binom{[n]}{k}$.  Suppose the multiset $I \cup J$, when sorted, is equal to $\{a_1 \leq b_1 \leq a_2 \leq \cdots \leq a_k \leq b_k\}$.  Then we define $\sort_1(I,J) := \{a_1,\ldots,a_k\}$ and $\sort_2(I,J) := \{b_1,\ldots,b_k\}$.  Also if $I \cap J = \emptyset$ define $\min(I,J) := \{\min(i_1,j_1),\ldots, \min(i_k,j_k)\}$ and similarly $\max(I,J)$; for general $I, J$, we define $\min(I,J) := \min(I-J, J-I) \cup (I \cap J)$ and similarly for $\max(I,J):=\max(I-J, J-I) \cup (I \cap J)$.

The following result was independently obtained by Farber and Postnikov \cite{FaPo}.  It says that the Pl\"ucker coordinates $\Delta_I(X)$ of $X \in \tGr(k,n)_{\geq 0}$ are log-concave.  See \cite{LPP} for a related situation in Schur positivity.

\begin{proposition}\label{prop:inequalities}
Let $X \in \tGr(k,n)_{\geq 0}$.  Then 
$$
\Delta_I(X) \Delta_J(X) \leq \Delta_{\min(I,J)}(X) \Delta_{\max(I,J)}(X) \leq \Delta_{\sort_1(I,J)}(X) \Delta_{\sort_2(I,J)}(X).
$$
\end{proposition}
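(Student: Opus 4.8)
The plan is to reduce both inequalities to a purely combinatorial comparison of compatible non-crossing pairings, using Theorem~\ref{thm:TL} together with the nonnegativity of the Temperley--Lieb immanants on the totally nonnegative Grassmannian.

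\emph{Step 1: reduction to set inclusions.} By Theorem~\ref{thm:matchingplucker} write $X = \tM(N)$ for a planar bipartite graph $N$ with nonnegative weights. Directly from the definition, $F_{\tau,T}(N)$ is a sum of products of (nonnegative) edge weights times powers of $2$, hence $F_{\tau,T}(N) \ge 0$; by Proposition~\ref{prop:tau} this value depends only on $\tM(N) = X$, so $F_{\tau,T}(X) \ge 0$ for every $X \in \tGr(k,n)_{\ge 0}$. Applying Theorem~\ref{thm:TL} to each of the three pairs $(I,J)$, $(\min(I,J),\max(I,J))$, $(\sort_1(I,J),\sort_2(I,J))$ expresses each of the three products $\Delta_\bullet(X)\,\Delta_\bullet(X)$ as a sum of nonnegative numbers $F_{\tau,T}(X)$ over the $(\tau,T)\in\A_{k,n}$ compatible with that pair. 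Thus it suffices to show: every $(\tau,T)$ compatible with $(I,J)$ is compatible with $(\min(I,J),\max(I,J))$, and every $(\tau,T)$ compatible with $(\min(I,J),\max(I,J))$ is compatible with $(\sort_1(I,J),\sort_2(I,J))$.

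\emph{Step 2: compatibility is an edge-by-edge condition.} All three pairs have the same union multiset and the same common part $I\cap J$, so in each case the set $T = I\cap J$ and the support $S(\tau) = (I\setminus J)\cup(J\setminus I) =: D$ are forced; only the requirement that each edge of $\tau$ join a vertex of the first set to a vertex of the second changes. Since every edge $e$ of $\tau$ satisfies $e\subseteq D$, this requirement is equivalent to $|e\cap P| = 1$ for every edge $e$, where $P$ is the first member of the pair. Hence the two inclusions follow from a single type of statement: \emph{if $\tau$ is a non-crossing matching of $D$ with $|e\cap P| = 1$ for all $e$, where $(P,Q)$ satisfies $(P\setminus Q)\cup(Q\setminus P) = D$, then $|e\cap\min(P,Q)|=1$ for all $e$} (resp.\ $|e\cap\sort_1(P,Q)|=1$ for all $e$). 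The chaining is legitimate because $\sort_i(\min(I,J),\max(I,J)) = \sort_i(I,J)$, sorting depending only on the union multiset.

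\emph{Step 3: the combinatorial lemma.} Introduce the prefix-count function $f(x) := |P\cap[1,x]| - |Q\cap[1,x]|$. A short computation of the prefix sums of $\min(P,Q)$, $\max(P,Q)$, $\sort_1(P,Q)$ shows that for $x\in D$: one has $x\in\min(P,Q)$ iff $f(x-1)\ge 0$ when $x\in P$ and iff $f(x-1)\le 0$ when $x\in Q$, and $x\in\sort_1(P,Q)$ iff $f(x-1)$ is even. Fix an edge $e = \{a,b\}\in\tau$ with $a<b$. Since $\tau$ is non-crossing and $e\in\tau$, every element of $D$ strictly between $a$ and $b$ is matched by $\tau$ to another such element, and because those edges are bichromatic the open interval $(a,b)$ contains equally many elements of $P$ and of $Q$; hence $f$ has net change $0$ over $\{a+1,\dots,b-1\}$, so $f(b-1) = f(a-1)\pm 1$. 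In particular $f(a-1)$ and $f(b-1)$ have opposite parity, which gives $|e\cap\sort_1(P,Q)| = 1$; and tracking the sign of the $\pm 1$ in the two cases $a\in P$ and $a\in Q$ shows that exactly one of $a,b$ lies in $\min(P,Q)$, which gives $|e\cap\min(P,Q)| = 1$. Together with Steps 1--2, this proves the proposition.

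The genuinely delicate point is Step 3: one must pin down the half-open conventions in the prefix-count characterizations of $\min$, $\max$, $\sort_1$, $\sort_2$ exactly, and then check both orientations of an edge in the sign bookkeeping; everything else is formal. If one wished to avoid this, an alternative is to reach $(\sort_1(I,J),\sort_2(I,J))$ from $(I,J)$ by a sequence of elementary ``local sorting'' moves, each swapping a single out-of-order pair $i_r > j_r$ into place while fixing $\tau$, and to verify the single-move version of the inclusion; but the direct argument above appears cleaner.
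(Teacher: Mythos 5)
Your proof is correct and takes essentially the same route as the paper, which only gestures at the argument with ``Follows from Theorem \ref{thm:TL} and an analysis of compatibility.'' You have supplied exactly the missing ``analysis of compatibility,'' reducing both inequalities to nonnegativity of the $F_{\tau,T}$ together with the inclusion of compatible-pairing sets, and proving the inclusion by the prefix-count characterization of $\min$ and $\sort_1$ and the balanced-interval property of a non-crossing edge; the chaining via $\sort_i(\min(I,J),\max(I,J))=\sort_i(I,J)$ is the right way to get the second inequality from the same lemma.
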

\begin{proof}
Follows from Theorem \ref{thm:TL} and an analysis of compatibility.
\end{proof}

A matroid $\M$ is \defn{sort-closed} if $I,J \in \M$ implies $\sort_1(I,J), \sort_2(I,J) \in \M$.  We deduce the following result, first proved by Lam and Postnikov \cite{LP}, in the context of alcoved polytopes \cite{LPalcoved}.

\begin{corollary}\label{cor:sortclosed}
Positroids are sort-closed.
\end{corollary}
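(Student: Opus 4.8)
The plan is to read this off immediately from the log-concavity inequality of Proposition~\ref{prop:inequalities}. First I would recall that, by definition, $\M$ is a positroid precisely when $\M = \M_X$ for some $X \in \Gr(k,n)_{\geq 0}$; choosing a representative $\tX \in \tGr(k,n)_{\geq 0}$ of $X$, every Pl\"ucker coordinate $\Delta_I(\tX)$ is nonnegative, and $I \in \M$ if and only if $\Delta_I(\tX) > 0$.

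Next, suppose $I, J \in \M$, so that $\Delta_I(\tX) > 0$ and $\Delta_J(\tX) > 0$, and hence $\Delta_I(\tX)\,\Delta_J(\tX) > 0$. Applying Proposition~\ref{prop:inequalities} gives
$$
0 < \Delta_I(\tX)\,\Delta_J(\tX) \leq \Delta_{\sort_1(I,J)}(\tX)\,\Delta_{\sort_2(I,J)}(\tX).
$$
Since both factors on the right are nonnegative (because $\tX \in \tGr(k,n)_{\geq 0}$) while their product is strictly positive, each of $\Delta_{\sort_1(I,J)}(\tX)$ and $\Delta_{\sort_2(I,J)}(\tX)$ must itself be strictly positive. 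Therefore $\sort_1(I,J) \in \M_X = \M$ and $\sort_2(I,J) \in \M_X = \M$, which is exactly the assertion that $\M$ is sort-closed.

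There is essentially no obstacle here: all of the genuine work has been absorbed into Proposition~\ref{prop:inequalities}, whose proof in turn rests on Theorem~\ref{thm:TL} together with a compatibility analysis of non-crossing pairings. The only point deserving a passing remark is that $\sort_1(I,J)$ and $\sort_2(I,J)$ really are $k$-element subsets even when $I \cap J \neq \emptyset$, since three equal entries cannot occur in the multiset $I \cup J$; but this is already built into the definitions preceding Proposition~\ref{prop:inequalities}, so no extra argument is needed. One could equally well invoke only the weaker chain involving $\min(I,J)$ and $\max(I,J)$, but the $\sort$ version is the statement worth recording, and it recovers the theorem of Lam and Postnikov~\cite{LP}.
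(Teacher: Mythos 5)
Your proof is correct and is exactly the deduction the paper has in mind when it writes ``We deduce the following result'' after Proposition~\ref{prop:inequalities}: from $0 < \Delta_I\Delta_J \leq \Delta_{\sort_1(I,J)}\Delta_{\sort_2(I,J)}$ and nonnegativity of all Pl\"ucker coordinates on $\tGr(k,n)_{\geq 0}$, both $\sort_i(I,J)$ must lie in $\M_X$. Nothing more is needed, and your parenthetical remark about $\sort_1,\sort_2$ being honest $k$-subsets is a sensible sanity check.
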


In fact the converse of Corollary \ref{cor:sortclosed} also holds \cite{LP}: a sort-closed matroid is a positroid.

\section{Webs and triple dimers}
\subsection{$A_2$-webs and reductions}
We review Kuperberg's $A_2$-webs, modified for our situation by allowing tagged boundary vertices.

As usual an integer $n$ is fixed.  A \defn{web} is a planar bipartite graph embedded into a disk where all interior vertices are trivalent.  Edges are always directed towards white interior vertices and away from black interior vertices.  Furthermore, we allow some vertex-less directed cycles in the interior, some directed edges from one boundary vertex to another, and some boundary vertices that are otherwise not used to be ``tagged".

Note that boundary vertices of a web are not colored.  In the following picture, the boundary vertex $6$ is tagged but $7$ is not.

\begin{center}
\begin{tikzpicture}
\coordinate (a4) at (45:2);
\coordinate (a3) at (90:2);
\coordinate (a2) at (135:2);
\coordinate (a1) at (180:2);
\coordinate (a8) at (225:2);
\coordinate (a7) at (270:2);
\coordinate (a6) at (315:2);
\coordinate (a5) at (0:2);
\coordinate (X) at (200:1);
\coordinate (Y) at (20:1);

\node at (45:2.3) {$4$};
\node at (90:2.3) {$3$};
\node at (135:2.3) {$2$};
\node at (180:2.3) {$1$};
\node at (225:2.3) {$8$};
\node at (270:2.3) {$7$};
\node at (315:2.3) {$6$};
\node at (0:2.3) {$5$};

\draw (0,0) circle (2);

\draw[thick,->-] (a2) to [bend right] (a3);

\begin{scope}[decoration={
    markings,
    mark=at position 0.5 with {\arrow{>}}}
    ] 

\draw[thick,postaction={decorate}] (a1) -- (X);
\draw[thick,postaction={decorate}] (a8) -- (X);
\draw[thick,postaction={decorate}] (Y) -- (X);
\draw[thick,postaction={decorate}] (Y) -- (a4);
\draw[thick,postaction={decorate}] (Y) -- (a5);
\end{scope}

\filldraw [white] (X) circle (0.1cm);
\filldraw [black] (Y) circle (0.1cm);
\filldraw [black] (a7) circle (0.5mm);
\begin{scope}[{shift={(a6)}}]
\draw (-0.15,0.1) -- (0.15,-0.1) -- (0.15,0.1) -- (-0.15,-0.1) -- (-0.15,0.1);
\end{scope}
\draw (X) circle (0.1cm);
\draw[thick] (0,-0.8) circle (0.4cm);
\draw[->,thick] (-0.4,-0.8) -- (-0.4,-0.8);
\end{tikzpicture}
\end{center}

The \defn{degree} $d(W)$ of a web $W$ is given by
$$
d(W) = 3\#\{\text{boundary tags}\} + 3 \#\{\text{boundary paths}\} $$
$$+ \#\{\text{boundary vertices incident to a white interior vertex}\}
$$
$$ +2 \#\{\text{boundary vertices incident to a black interior vertex}\}.
$$
A simple counting argument shows that $d(W)$ is always divisible by 3.  In the above example we get $d(W) = 12 = 3 \times 4$.  Let $\W_{k,n}$ denote the (infinite) set of webs $W$ on $n$ boundary vertices, satisfying $d(W) = 3k$.  

A web $W$ is called \defn{non-elliptic} if it has no contractible loops, no pairs of edges enclosing a contractible disk, and no simple $4$-cycles all of whose vertices are internal and which enclose a contractible disk.  (Here contractible means that the enclosed region contains no other edges of the graph.)  Let $\D_{k,n}$ denote the set of non-elliptic webs $D$ on $n$ boundary vertices, satisfying $d(D) = 3k$.

Any web $W$ can be reduced to a formal (but finite) linear combination of non-elliptic webs, using the rules:
\
\begin{enumerate}
\item For either orientation,
$$
\begin{tikzpicture}
\draw (0,0) circle (0.5cm);
\node at (1.5,0) {$ = 3$};
\end{tikzpicture}
$$
\item
$$
\begin{tikzpicture}
\draw (-1.3,0) -- (-0.5,0);
\draw (1.3,0) -- (0.5,0);
\draw (0,0) circle (0.5cm);
\node at (1.8,0) {$ = 2$};
\draw[->-] (4.5,0) -- (2.5,0);
\filldraw[black] (0.5,0) circle (0.1cm);
\filldraw[white] (-0.5,0) circle (0.1cm);
\draw (-0.5,0) circle (0.1cm);
\end{tikzpicture}
$$
\item
$$
\begin{tikzpicture}[scale=0.8]
\coordinate (a) at (-0.5,0.5);
\coordinate (b) at (0.5,0.5);
\coordinate (c) at (0.5,-0.5);
\coordinate (d) at (-0.5,-0.5);
\draw (a) -- (b) -- (c) -- (d) -- (a);
\draw (a) -- (-1,1);
\draw (b) -- (1,1);
\draw (c) -- (1,-1);
\draw (d) -- (-1,-1);
\filldraw[black] (a) circle (0.1cm);
\filldraw[white] (b) circle (0.1cm);
\draw (b) circle (0.1cm);
\filldraw[black] (c) circle (0.1cm);
\filldraw[white] (d) circle (0.1cm);
\draw (d) circle (0.1cm);
\node at (2,0) {$ = $};
\begin{scope}[{shift={(4,0)}}]
\draw[->-] (1,1) to [bend left] (-1,1);
\draw[->-] (-1,-1) to [bend left] (1,-1);
\end{scope}
\node at (6,0) {$ + $};
\begin{scope}[{shift={(8,0)}}]
\draw[->-] (1,1) to [bend right](1,-1);
\draw[->-] (-1,-1) to [bend right] (-1,1);
\end{scope}
\end{tikzpicture}
$$
\end{enumerate}

Note that our signs differ somewhat from Kuperberg's, but agrees with those of Pylyavskyy \cite{Pyl}.  Kuperberg \cite{Kup,Kup2} shows that this reduction process is confluent: we get an expression
$$
W = \sum_{D \in \D_{k,n}} W_D D 
$$
expressing a web $W$ in terms of non-elliptic webs $D$, where the coefficients $W_D \in \Z$ do not depend on the choices of reduction moves performed.  

\subsection{Weblike subgraphs}
Let $G \subset N$ be a subgraph consisting of 
\begin{enumerate}
\item
some connected components $A_1,A_2,\ldots,A_r$ where every vertex is either (a) internal trivalent, (b) internal bivalent, or (c) a boundary leaf, and such that if $v$ and $w$ are two trivalent vertices connected by a path consisting only of bivalent vertices, then $v$ and $w$ have different colors (or equivalently, the path between $v$ and $w$ has an odd number of edges), and
\item
some (internal) simple cycles $C_1,C_2,\ldots,C_s$ (necessarily of even length), and
\item
some isolated edges $E_1,E_2,\ldots,E_t$ (dipoles).
\end{enumerate}
Furthermore, we require that any component $A_i$ that has no trivalent vertices (and is thus a path between boundary vertices) is equipped with an orientation.  We call such a subgraph $G$ that uses all the internal vertices a \defn{weblike} subgraph.

In the following, we shall abuse notation by using $e$ to both denote an edge $e$, and the weight of that same edge.  To each subgraph $G$ we associate the weight
$$
\wt(G) := \prod_{i=1}^r \wt(A_i) \; \prod_{j=1}^s \wt(C_j) \; \prod _{\ell=1}^t \wt(E_\ell)
$$
where
\begin{enumerate}
\item If $A_i$ is a path between boundary vertices consisting of edges $e_1,e_2,\ldots,e_d$ (listed in order of the orientation), then
$$
\wt(A_i)=
\begin{cases} e_1e_2^2e_3e_4^2\cdots & \mbox{if the first internal vertex along $A_i$ is white,} \\
e_1^2e_2e_3^2e_4\cdots & \mbox{if the first internal vertex along $A_i$ is black.}
\end{cases}
$$
\item
If $A_i$ is not a path, then 
$$
\wt(A_i) = \prod_{e \in A_i} e^{a_e}
$$
and 
$$
a_e = \begin{cases} 1 & \mbox{if $e$ is incident to, or an even distance from, a trivalent vertex,} \\
2 & \mbox{if $e$ is an odd distance from a trivalent vertex.}
\end{cases}
$$
\item
If $C_i$ is a cycle with edges $e_1,e_2,\ldots,e_{2m}$ in cyclic order, then
$$
\wt(C_i) = e_1e_2^2\cdots e_{2m-1} e_{2m}^2 + e_1^2e_2\cdots e_{2m-1}^2 e_{2m}.
$$
\item
If $E_i$ is an isolated edge $e$, then 
$$
\wt(E_i) = e^3.
$$
\end{enumerate}

To a weblike graph $G$, we associate a web $W = W(G)$ as follows: 
\begin{enumerate}
\item
Each component $A_i$ gives rise to a component $W_i$ obtained by removing all bivalent vertices, and orienting all edges towards the white internal trivalent vertices.  In the case that $A_i$ has no internal trivalent vertex we orient the edge using the orientation of the path in $G$.
\item
Each cycle $C_i$ is replaced by a vertexless loop oriented arbitrarily.
\item
All internal edges $E_i$ are removed; a black boundary  vertex (that is, a boundary vertex that is adjacent to a white interior vertex) is ``tagged" if it belongs to an edge $E_i$; a white boundary vertex (that is, a boundary vertex that is adjacent to a black interior vertex) is ``tagged" if it is not used in $G$.
\end{enumerate}

We consider a boundary vertex $i$ to be used in $D$ if it belongs to a component that contains edges.  Thus boundary vertices that are tagged are not considered used.

\begin{lemma}\label{lem:webrepresent}
Suppose $G \subset N$ is a weblike subgraph with web $W = W(G)$.  Suppose $W'$ is some other web that can be obtained from $W'$ by a series of reductions.  Then there exists a weblike subgraph $G' \subset N$ such that $W(G') = W'$.
\end{lemma}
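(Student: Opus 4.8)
The plan is to reduce to a single reduction move and argue locally, then iterate. A series of reductions is a composition of single moves, and since the $G'$ that we produce will again be a weblike subgraph of $N$, it suffices to treat one step: produce $G'$ with $W(G') = W'$ when $W'$ is obtained from $W = W(G)$ by a single application of loop removal, bigon removal, or square reduction. Each such move is supported in a small disk $\delta$ inside the ambient disk, with $W$ and $W'$ agreeing outside $\delta$; I will take $G'$ to equal $G$ outside the part of $N$ lying over $\delta$, and only alter $G$ inside.

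The geometric input I will establish is the following. Let $\gamma$ be the loop, the union of the two bigon edges, or the internal $4$-cycle being reduced, and let $R \subset N$ be the region bounded by the paths of $G$ that map to $\gamma$. Because $\gamma$ is contractible in $W$, the open region $R$ contains no component of $G$ with a trivalent vertex and no cyclic component of $G$: every interior vertex of $N$ inside $R$ is used by $G$, hence lies in a component of $G$ contained entirely in $R$, and any such component other than an isolated edge (dipole) would contribute an edge of $W$ inside $\delta$, contradicting contractibility. So inside $R$ the subgraph $G$ consists only of dipoles $E_\ell$, and $\partial R$ is a union of paths of $G$ through bivalent vertices joining $0$, $2$, or $4$ trivalent corner vertices. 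Since $N$ is bipartite, each such boundary path joins trivalent vertices of opposite colors, hence has odd length, hence has an even number of interior vertices; and in the loop case $\gamma$ lifts to a single even cycle $C = \partial R$.

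I now describe $G'$ inside $R$. In the bigon case I retain one of the two boundary paths of $R$, and in the square case a non-adjacent pair of the four boundary paths; in both cases every corner vertex then becomes bivalent in $G'$, and after contracting bivalent vertices these retained paths become exactly the single edge (bigon) or the two arcs (square) of the relevant smoothing, with the external edges reconnected accordingly. Everything else in the closure of $R$ --- namely the interior vertices of each discarded boundary path (evenly many, so I pair them consecutively using edges of that path) together with the dipoles $E_\ell$ (left unchanged) --- I re-cover by isolated-edge components. In the loop case there are no corners: I replace the cycle $C$ by one of its two alternating perfect matchings, again a union of isolated edges, and keep the $E_\ell$. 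Since all components newly created inside $R$ are isolated edges in the interior of the disk, they disappear on passing to the web and create no boundary tags; hence $W(G')$ is $W$ with $\gamma$ and the now-vanished contents of its interior deleted and the external edges reconnected precisely as in the smoothing, i.e.\ $W(G') = W'$. Finally one checks that $G'$ is a legitimate weblike subgraph: it uses every interior vertex of $N$ by construction, and the color-alternation condition (two trivalent vertices joined by a bivalent path have opposite colors) along the new bivalent paths is inherited from the corresponding bipartite, consistently directed edges of $W$; any new boundary-to-boundary path is oriented to match $W'$.

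The point I expect to require the most care is the geometric input of the second paragraph: spelling out precisely why contractibility of $\gamma$ in $W$ forces the region $R \subset N$ to meet $G$ in nothing but dipoles, together with the bookkeeping --- orientations of the resulting boundary paths, absence of spurious tags, parities of the discarded paths --- needed to certify that $W(G')$ equals $W'$ exactly rather than merely resembling it. The degenerate configurations, where a boundary path has length one or where $R$ has empty interior, are handled by the same recipe with the corresponding pieces taken to be empty.
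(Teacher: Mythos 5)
Your proposal is correct, and the $G'$ you ultimately construct is the same as the paper's: for each web edge deleted by the reduction you replace the corresponding odd-length path in $G$ by a consecutive pairing of its $2m$ interior vertices into $m$ dipoles, and for loop removal you replace the corresponding even cycle of $G$ by one of its two alternating perfect matchings. That is exactly the paper's two-sentence construction. Where you differ is in the preparatory ``geometric input'': you prove that the lift $R\subset N$ of the reduced face meets $G$ only in dipoles, then keep those dipoles fixed and modify only $\partial R$. That fact is true (and your argument for it is sound, since contractibility of $\gamma$ forces any cycle or trivalent component of $G$ in $R$ to leave a trace in $W$), but it is not actually needed to run the construction: the paper simply deletes the paths mapping to the removed web edges and re-covers their interiors with dipoles, leaving everything else in $G$ --- whatever it may be --- untouched. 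So your argument buys a pleasant structural description of $R$, while the paper's buys brevity by never asking what lives inside the face being reduced. Both are valid; yours is a somewhat longer route to the identical endgame, and your bookkeeping of parities, orientations, and tags is handled correctly.
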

\begin{proof}
A reduction $W \mapsto W'$ corresponds to removing some of the edges in $W$ (and then removing bivalent vertices that result).  This can be achieved on the level of weblike subgraphs by replacing a path of odd length by some isolated dipoles:
\begin{center}
\begin{tikzpicture}
\coordinate (b) at (0,0);
\coordinate (w) at (1,0);
\coordinate (b2) at (2,0);
\coordinate (w2) at (3,0);
\coordinate (b3) at (4,0);
\coordinate (w3) at (5,0);
\draw (b) -- (w3);
\filldraw [white] (w) circle (0.1cm);
\filldraw [black] (b) circle (0.1cm);
\draw (w) circle (0.1cm);
\filldraw [black] (b2) circle (0.1cm);
\filldraw [white] (w2) circle (0.1cm);
\draw (w2) circle (0.1cm);
\filldraw [black] (b3) circle (0.1cm);
\filldraw [white] (w3) circle (0.1cm);
\draw (w3) circle (0.1cm);
\draw [->] (6,0) -- (7,0);

\begin{scope}[shift={(8,0)}]
\coordinate (b) at (0,0);
\coordinate (w) at (1,0);
\coordinate (b2) at (2,0);
\coordinate (w2) at (3,0);
\coordinate (b3) at (4,0);
\coordinate (w3) at (5,0);
\draw (w) -- (b2);
\draw (w2) -- (b3);
\filldraw [white] (w) circle (0.1cm);
\filldraw [black] (b) circle (0.1cm);
\draw (w) circle (0.1cm);
\filldraw [black] (b2) circle (0.1cm);
\filldraw [white] (w2) circle (0.1cm);
\draw (w2) circle (0.1cm);
\filldraw [black] (b3) circle (0.1cm);
\filldraw [white] (w3) circle (0.1cm);
\draw (w3) circle (0.1cm);
\end{scope}
\end{tikzpicture}
\end{center}
The same trick allows us to replace an even cycle by a number of isolated dipoles.
\end{proof}

\subsection{Web immanants}

For each non-elliptic web $D \in \D_{k,n}$, we define a generating function, called the \defn{web immanant}
$$
F_D(N) := \sum_W W_D \sum_{W(G) = W} \wt(G).
$$
In otherwords, each subgraph $G$ contributes a multiple of $\wt(G)$ to $F_D$, where the multiple is equal to the coefficient of $D$ in the web $W(G)$.  

\begin{example}\label{ex:big}
We compute $F_D(N)$ for the planar bipartite graph
\begin{center}
\begin{tikzpicture}
\coordinate (b) at (45:1);
\coordinate (w) at (135:1);
\coordinate (b1) at (225:1);
\coordinate (w1) at (315:1);
\node at (135:2.3) {$1$} ;
\node at (45:2.3) {$2$} ;
\node at (315:2.3){$3$} ;
\node at (225:2.3){$4$} ;
\draw (0,0) circle (2);
\draw (w) -- node[above]{$a$} (b) --node[right]{$b$} (w1) --node[below]{$c$} (b1) --node[left]{$d$} (w);
\draw (w) -- (135:2);
\draw (b) -- (45:2);
\draw (w1) --  (315:2);
\draw (b1) --  (225:2);
\filldraw [white] (w) circle (0.1cm);
\filldraw [black] (b) circle (0.1cm);
\draw (w) circle (0.1cm);
\filldraw [white] (w1) circle (0.1cm);
\filldraw [black] (b1) circle (0.1cm);
\draw (w1) circle (0.1cm);
\end{tikzpicture}
\end{center}
In the following table we often list tagged boundary vertices as a subset.  There are $|\D_{2,4}| = 50$ non-elliptic webs in this case. 

\newpage

\begin{multicols}{2}
\begin{center}
\begin{tabular}{|c|c|}
\hline 
$D$ & $F_D(N)$ \\
\hline 
$\{1,2\}$&  $b^3$\\
\hline 
$\{1,3\}$&  $1$\\
\hline 
$\{1,4\}$&  $d^3$\\
\hline 
$\{2,3\}$&  $a^3$\\
\hline
$\{2,4\}$ & $3(a^2bc^2d + ab^2cd^2) + a^3c^3 + b^3d^3$\\
\hline 
$\{3,4\}$&  $c^3$\\
\hline
\hline
$(1 \to 2) \cup (3 \to 4)$ & $a^2c^2 + abcd$ \\
\hline
$(2 \to 1) \cup (3 \to 4)$ & $ac^2$\\
\hline
$(1 \to 2) \cup (4 \to 3)$ & $a^2c$ \\
\hline
$(2 \to 1) \cup (4 \to 3)$ & $ac$\\
\hline
$(1 \to 4) \cup (2 \to 3)$ & $bd^2$\\
\hline
$(4 \to 1) \cup (2 \to 3)$ & $bd$\\
\hline
$(1 \to 4) \cup (3 \to 2)$ & $b^2d^2 + abcd$\\
\hline
$(4 \to 1) \cup (3 \to 2)$ & $b^2d$\\
\hline
\hline
$\{1\} \cup (3 \to 4)$ & $c^2$ \\
\hline
$\{1\} \cup (4 \to 3)$ & $c$ \\
\hline
$\{1\} \cup (2 \to 3)$ & $b$ \\
\hline
$\{1\} \cup (3 \to 2)$ & $b^2$ \\
\hline
$\{1\} \cup (2 \to 4)$ & $bc^2$ \\
\hline
$\{1\} \cup (4 \to 2)$ & $b^2c$ \\
\hline
\hline
$\{2\} \cup (3 \to 4)$ & $a^3c^2 + 2a^2bcd + ab^2d^2$ \\
\hline
$\{2\} \cup (4 \to 3)$ & $a^3c + a^2bd$ \\
\hline
$\{2\} \cup (1 \to 4)$ & $b^3d^2 + 2ab^2cd + a^2bc^2$ \\
\hline
$\{2\} \cup (4 \to 1)$ & $b^3d + ab^2c$ \\
\hline
$\{2\} \cup (1 \to 3)$ & $a^2b$ \\
\hline
$\{2\} \cup (3 \to 1)$ & $ab^2$ \\
\hline
\hline
$\{3\} \cup (1 \to 2)$ & $a^2$ \\
\hline
$\{3\} \cup (2 \to 1)$ & $a$ \\
\hline
$\{3\} \cup (4 \to 1)$ & $d$ \\
\hline
$\{3\} \cup (1 \to 4)$ & $d^2$ \\
\hline
$\{3\} \cup (4 \to 2)$ & $a^2d$ \\
\hline
$\{3\} \cup (2 \to 4)$ & $ad^2$ \\
\hline
\hline
$\{4\} \cup (1 \to 2)$ & $a^2c^3 + 2abc^2d + b^2cd^2$ \\
\hline
$\{4\} \cup (2 \to 1)$ & $ac^3 + bc^2d$ \\
\hline
$\{4\} \cup (3 \to 2)$ & $b^2d^3 + 2abcd^2+a^2c^2d$ \\
\hline
$\{4\} \cup (2 \to 3)$ & $bd^3 +acd^2$ \\
\hline
$\{4\} \cup (3 \to 1)$ & $c^2d$ \\
\hline
$\{4\} \cup (1 \to 3)$ & $cd^2$ \\
\hline
\end{tabular}

\columnbreak

\begin{tabular}{|c|c|}
\hline
$\begin{tikzpicture}[scale=0.3]
\coordinate (b1) at (315:1);
\coordinate (a2) at (45:2) ;
\coordinate (a1) at (135:2) ;
\coordinate (a3) at (315:2) ;
\coordinate (a4) at (225:2) ;
\node at (45:2.6) {$2$};
\node at (135:2.6) {$1$} ;
\node at (315:2.6){$3$} ;
\node at (225:2.6){$4$} ;
\draw (0,0) circle (2);
\draw[->-] (b1) -- (a2);
\draw[->-] (b1) -- (a3);
\draw[->-] (b1) -- (a4);
\filldraw [black] (b1) circle (0.2cm);
 \end{tikzpicture}$ & $abd^2 + a^2cd$ \\
\hline
$\begin{tikzpicture}[scale=0.3]
\coordinate (b1) at (225:1);
\coordinate (a2) at (45:2) ;
\coordinate (a1) at (135:2) ;
\coordinate (a3) at (315:2) ;
\coordinate (a4) at (225:2) ;
\node at (45:2.6) {$2$};
\node at (135:2.6) {$1$} ;
\node at (315:2.6){$3$} ;
\node at (225:2.6){$4$} ;
\draw (0,0) circle (2);
\draw[->-] (b1) --  (a3);
\draw[->-] (b1) --  (a4);
\draw[->-] (b1) --  (a1);
\filldraw [black] (b1) circle (0.2cm);
 \end{tikzpicture}$ & $cd$ \\
\hline
$\begin{tikzpicture}[scale=0.3]
\coordinate (b1) at (135:1);
\coordinate (a2) at (45:2) ;
\coordinate (a1) at (135:2) ;
\coordinate (a3) at (315:2) ;
\coordinate (a4) at (225:2) ;
\node at (45:2.6) {$2$};
\node at (135:2.6) {$1$} ;
\node at (315:2.6){$3$} ;
\node at (225:2.6){$4$} ;
\draw (0,0) circle (2);
\draw[->-] (b1) -- (a1);
\draw[->-] (b1) -- (a2);
\draw[->-] (b1) -- (a4);
\filldraw [black] (b1) circle (0.2cm);
 \end{tikzpicture}$ & $b^2cd + abc^2$ \\
\hline
$\begin{tikzpicture}[scale=0.3]
\coordinate (b1) at (45:1);
\coordinate (a2) at (45:2) ;
\coordinate (a1) at (135:2) ;
\coordinate (a3) at (315:2) ;
\coordinate (a4) at (225:2) ;
\node at (45:2.6) {$2$};
\node at (135:2.6) {$1$} ;
\node at (315:2.6){$3$} ;
\node at (225:2.6){$4$} ;
\draw (0,0) circle (2);
\draw[->-] (b1) -- (a2);
\draw[->-] (b1) -- (a3);
\draw[->-] (b1) -- (a1);
\filldraw [black] (b1) circle (0.2cm);
 \end{tikzpicture}$ & $ab$ \\

\hline
$\begin{tikzpicture}[scale=0.3]
\coordinate (w) at (315:1);
\coordinate (a2) at (45:2) ;
\coordinate (a1) at (135:2) ;
\coordinate (a3) at (315:2) ;
\coordinate (a4) at (225:2) ;
\node at (45:2.6) {$2$};
\node at (135:2.6) {$1$} ;
\node at (315:2.6){$3$} ;
\node at (225:2.6){$4$} ;
\draw (0,0) circle (2);
\draw[->-] (a2) -- (w);
\draw[->-] (a3) -- (w);
\draw[->-] (a4) -- (w);
\begin{scope}[{shift={(a1)},scale=2}]
\draw (-0.15,0.1) -- (0.15,-0.1) -- (0.15,0.1) -- (-0.15,-0.1) -- (-0.15,0.1);
\end{scope}
\filldraw [white] (w) circle (0.2cm);
\draw (w) circle (0.2cm);
 \end{tikzpicture}$ & $bc$ \\
\hline
$\begin{tikzpicture}[scale=0.3]
\coordinate (w) at (225:1);
\node at (45:2.6) {$2$};
\node at (135:2.6) {$1$} ;
\node at (315:2.6){$3$} ;
\node at (225:2.6){$4$} ;
\draw (0,0) circle (2);
\draw[->-]  (225:2) -- (w);
\draw[->-] (315:2) -- (w);
\draw[->-] (135:2) -- (w);
\filldraw [white] (w) circle (0.2cm);
\draw (w) circle (0.2cm);
\begin{scope}[{shift={(a2)},scale=2}]
\draw (-0.15,0.1) -- (0.15,-0.1) -- (0.15,0.1) -- (-0.15,-0.1) -- (-0.15,0.1);
\end{scope}
 \end{tikzpicture}$ & $ab^2d + a^2bc$ \\
\hline
$\begin{tikzpicture}[scale=0.3]
\coordinate (w) at (135:1);
\coordinate (a2) at (45:2) ;
\coordinate (a1) at (135:2) ;
\coordinate (a3) at (315:2) ;
\coordinate (a4) at (225:2) ;
\node at (45:2.6) {$2$};
\node at (135:2.6) {$1$} ;
\node at (315:2.6){$3$} ;
\node at (225:2.6){$4$} ;
\draw (0,0) circle (2);
\draw[->-] (a2) -- (w);
\draw[->-] (a4) -- (w);
\draw[->-] (a1) -- (w);
\begin{scope}[{shift={(a3)},scale=2}]
\draw (-0.15,0.1) -- (0.15,-0.1) -- (0.15,0.1) -- (-0.15,-0.1) -- (-0.15,0.1);
\end{scope}
\filldraw [white] (w) circle (0.2cm);
\draw (w) circle (0.2cm);
 \end{tikzpicture}$ & $ad$ \\
\hline
$\begin{tikzpicture}[scale=0.3]
\coordinate (w) at (45:1);
\coordinate (a2) at (45:2) ;
\coordinate (a1) at (135:2) ;
\coordinate (a3) at (315:2) ;
\coordinate (a4) at (225:2) ;
\node at (45:2.6) {$2$};
\node at (135:2.6) {$1$} ;
\node at (315:2.6){$3$} ;
\node at (225:2.6){$4$} ;
\draw (0,0) circle (2);
\draw[->-] (a1) -- (w);
\draw[->-] (a2) -- (w);
\draw[->-] (a3) -- (w);
\begin{scope}[{shift={(a4)},scale=2}]
\draw (-0.15,0.1) -- (0.15,-0.1) -- (0.15,0.1) -- (-0.15,-0.1) -- (-0.15,0.1);
\end{scope}
\filldraw [white] (w) circle (0.2cm);
\draw (w) circle (0.2cm);
 \end{tikzpicture}$ & $bcd^2+ac^2d$ \\
\hline
\hline
$\begin{tikzpicture}[scale=0.3]
\coordinate (w) at (0:1);
\coordinate (b) at (180:1);
\coordinate (a2) at (45:2) ;
\coordinate (a1) at (135:2) ;
\coordinate (a3) at (315:2) ;
\coordinate (a4) at (225:2) ;
\node at (45:2.6) {$2$};
\node at (135:2.6) {$1$} ;
\node at (315:2.6){$3$} ;
\node at (225:2.6){$4$} ;
\draw (0,0) circle (2);
\draw[->-] (a2) -- (w);
\draw[->-] (a3) -- (w);
\draw[->-] (b) -- (w);
\draw[->-] (b) -- (a1);
\draw[->-] (b) -- (a4);
\filldraw [white] (w) circle (0.2cm);
\draw (w) circle (0.2cm);
\filldraw [black] (b) circle (0.2cm);
 \end{tikzpicture}$ & $bcd$ \\
\hline
$\begin{tikzpicture}[scale=0.3]
\coordinate (w) at (-90:1);
\coordinate (b) at (90:1);
\coordinate (a2) at (45:2) ;
\coordinate (a1) at (135:2) ;
\coordinate (a3) at (315:2) ;
\coordinate (a4) at (225:2) ;
\node at (45:2.6) {$2$};
\node at (135:2.6) {$1$} ;
\node at (315:2.6){$3$} ;
\node at (225:2.6){$4$} ;
\draw (0,0) circle (2);
\draw[->-] (a3) -- (w);
\draw[->-] (a4) -- (w);
\draw[->-] (b) -- (w);
\draw[->-] (b) -- (a1);
\draw[->-] (b) -- (a2);
\filldraw [white] (w) circle (0.2cm);
\draw (w) circle (0.2cm);
\filldraw [black] (b) circle (0.2cm);
 \end{tikzpicture}$ & $abc$ \\
\hline
$\begin{tikzpicture}[scale=0.3]
\coordinate (w) at (180:1);
\coordinate (b) at (0:1);
\coordinate (a2) at (45:2) ;
\coordinate (a1) at (135:2) ;
\coordinate (a3) at (315:2) ;
\coordinate (a4) at (225:2) ;
\node at (45:2.6) {$2$};
\node at (135:2.6) {$1$} ;
\node at (315:2.6){$3$} ;
\node at (225:2.6){$4$} ;
\draw (0,0) circle (2);
\draw[->-] (a1) -- (w);
\draw[->-] (a4) -- (w);
\draw[->-] (b) -- (w);
\draw[->-] (b) -- (a2);
\draw[->-] (b) -- (a3);
\filldraw [white] (w) circle (0.2cm);
\draw (w) circle (0.2cm);
\filldraw [black] (b) circle (0.2cm);
 \end{tikzpicture}$ & $abd$ \\
\hline
$\begin{tikzpicture}[scale=0.3]
\coordinate (w) at (90:1);
\coordinate (b) at (-90:1);
\coordinate (a2) at (45:2) ;
\coordinate (a1) at (135:2) ;
\coordinate (a3) at (315:2) ;
\coordinate (a4) at (225:2) ;
\node at (45:2.6) {$2$};
\node at (135:2.6) {$1$} ;
\node at (315:2.6){$3$} ;
\node at (225:2.6){$4$} ;
\draw (0,0) circle (2);
\draw[->-] (a1) -- (w);
\draw[->-] (a2) -- (w);
\draw[->-] (b) -- (w);
\draw[->-] (b) -- (a3);
\draw[->-] (b) -- (a4);
\filldraw [white] (w) circle (0.2cm);
\draw (w) circle (0.2cm);
\filldraw [black] (b) circle (0.2cm);
 \end{tikzpicture}$ & $acd$ \\
\hline

\end{tabular}
\end{center}
\end{multicols}
Note that all the 50 polynomials are linearly independent.  For example, to obtain the answer for $D = \{2\} \cup (3 \to 4)$ we need to consider the following weblike graphs, contributing $2a^2bcd$, $ab^2d^2$, and $a^3c^2$ respectively.
\begin{center}
\begin{tikzpicture}[scale=0.7]
\coordinate (b) at (45:1);
\coordinate (w) at (135:1);
\coordinate (b1) at (225:1);
\coordinate (w1) at (315:1);
\node at (135:2.3) {$1$} ;
\node at (45:2.3) {$2$} ;
\node at (315:2.3){$3$} ;
\node at (225:2.3){$4$} ;
\draw (0,0) circle (2);
\draw (w) -- node[above]{$a$} (b) --node[right]{$b$} (w1) --node[below]{$c$} (b1) --node[left]{$d$} (w);
\draw (w1) --  (315:2);
\draw (b1) --  (225:2);
\filldraw [white] (w) circle (0.1cm);
\filldraw [black] (b) circle (0.1cm);
\draw (w) circle (0.1cm);
\filldraw [white] (w1) circle (0.1cm);
\filldraw [black] (b1) circle (0.1cm);
\draw (w1) circle (0.1cm);

\begin{scope}[{shift={(5,0)}}]
\coordinate (b) at (45:1);
\coordinate (w) at (135:1);
\coordinate (b1) at (225:1);
\coordinate (w1) at (315:1);
\node at (135:2.3) {$1$} ;
\node at (45:2.3) {$2$} ;
\node at (315:2.3){$3$} ;
\node at (225:2.3){$4$} ;
\draw (0,0) circle (2);
\draw (w) -- node[above]{$a$} (b) --node[right]{$b$} (w1);
\draw (b1) --node[left]{$d$} (w);
\draw (w1) --  (315:2);
\draw (b1) --  (225:2);
\filldraw [white] (w) circle (0.1cm);
\filldraw [black] (b) circle (0.1cm);
\draw (w) circle (0.1cm);
\filldraw [white] (w1) circle (0.1cm);
\filldraw [black] (b1) circle (0.1cm);
\draw (w1) circle (0.1cm);
\end{scope}
\begin{scope}[{shift={(10,0)}}]
\coordinate (b) at (45:1);
\coordinate (w) at (135:1);
\coordinate (b1) at (225:1);
\coordinate (w1) at (315:1);
\node at (135:2.3) {$1$} ;
\node at (45:2.3) {$2$} ;
\node at (315:2.3){$3$} ;
\node at (225:2.3){$4$} ;
\draw (0,0) circle (2);
\draw (w) -- node[above]{$a$} (b);
\draw (w1) -- node[below]{$c$} (b1);
\draw (w1) -- (315:2);
\draw (b1) --  (225:2);
\filldraw [white] (w) circle (0.1cm);
\filldraw [black] (b) circle (0.1cm);
\draw (w) circle (0.1cm);
\filldraw [white] (w1) circle (0.1cm);
\filldraw [black] (b1) circle (0.1cm);
\draw (w1) circle (0.1cm);
\end{scope}
\end{tikzpicture}
\end{center}
Note that the leftmost graph has an elliptic web, which we must first reduce.

\end{example}

Write 
$$
\D(W): = \{D \in \D_{k,n} \mid W_D \neq 0\} \qquad \text{and} \qquad \D(G):= \{D \in \D_{k,n} \mid W(G)_D \neq 0\}.$$

\begin{proposition}\label{prop:web}
Suppose $N$ and $N'$ are such that $M(N) = M(N')$.  Then $F_D(N) = \alpha^3 \; F_D(N')$ where the scalar $\alpha$ is given by $\tM(N) = \alpha\tM(N')$.
\end{proposition}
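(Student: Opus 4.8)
The plan is to reduce everything to a finite check by means of Theorem~\ref{thm:Pos}: since $M(N)=M(N')$, the graphs $N$ and $N'$ are joined by a chain of gauge equivalences and the local moves (M1), (M2), (R1), (R2), (R3) of Section~\ref{sec:moves}. Each single move $N\rightsquigarrow N'$ scales the boundary measurements by an explicit factor, $\tM(N)=\beta\,\tM(N')$, and it suffices to prove $F_D(N)=\beta^{3}F_D(N')$ for every $D$. It is cleanest to package the web immanants together: set $\mathcal{F}(N):=\sum_G \wt(G)\,[W(G)]$, a finite $\Z[\text{edge weights}]$-linear combination of webs indexed by the weblike subgraphs $G$ of $N$; applying Kuperberg's reduction rules turns this into $\sum_{D\in\D_{k,n}}F_D(N)\,[D]$ in the span of non-elliptic webs, so the claim for a given move is exactly $\mathcal{F}(N)=\beta^{3}\mathcal{F}(N')$.

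The gauge equivalences, and much of the rest, rest on a single bookkeeping fact about the weight rules for $\wt(G)$: for every weblike subgraph $G$ and every interior vertex $v$ of $N$, the edges of $G$ meeting $v$ contribute total exponent exactly $3$ to $\wt(G)$ --- whether $v$ is trivalent in $G$ (three edges, each exponent $1$), bivalent in $G$ on a path or cycle (two edges with exponents $1$ and $2$, by the alternation built into the rules), or an endpoint of a dipole edge $E_\ell$ (exponent $3$). Hence scaling all weights at an interior vertex by $c$ multiplies every $\wt(G)$ by $c^{3}$ while multiplying $\tM$ by $c$, and since the weblike subgraphs and the webs $W(G)$ are otherwise unchanged, the claim holds for a gauge equivalence with $\beta=c^{-1}$ and no reduction needed. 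For (M2), (R1), (R2), (R3) one sets up local, weight-respecting correspondences between weblike subgraphs of $N$ and of $N'$: removing an internal dipole $e$ strips a factor $\wt(e)^{3}$ from every $\wt(G)$ (by the weight rule for $E_\ell$) and a factor $\wt(e)$ from every Pl\"ucker coordinate, giving $\beta=\wt(e)$; contracting a gauge-fixed bivalent vertex gives $\beta=1$; and parallel-edge and leaf moves are matched on the web side by the loop and bigon reduction rules (1), (2), the squared and alternating exponents in $\wt(G)$ --- in particular the $2$-cycle weight $e_1e_2^{2}+e_1^{2}e_2=e_1e_2(e_1+e_2)$ --- being arranged precisely so that the two generating functions agree. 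These are the ``case-by-case'' verifications; each is local and amounts to tracking exponents.

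The real obstacle is move (M1), the square move / urban renewal, and it is here that the connection with Kuperberg's square reduction advertised in the introduction does the work. A gauge-fixed square with cyclic edge weights $a,b,c,d$ is replaced by the rotated square with weights $a',b',c',d'$; comparing the local matching matrices gives $\tM(N)=(ac+bd)\,\tM(N')$, so $\beta=ac+bd$. The substance is that the ways a weblike subgraph of $N$ can meet the square region --- recorded by which of the four legs are occupied, which of the two square vertices are trivalent versus bivalent in $G$, and how the occupied legs are joined through the square --- produce local webs that are precisely $4$-cycles, bigons and pairs of parallel arcs, and that applying the square reduction rule (3) (together with (1), (2)) to these matches, term by term and with the uniform scalar $(ac+bd)^{3}$, the local contributions produced by weblike subgraphs of $N'$; the general case follows from this local statement because the square region communicates with the rest of $N$ only through its four legs. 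This is a finite but sizeable case analysis --- the computation of $F_D$ for the bare square carried out in Example~\ref{ex:big} is essentially an instance of it --- and the one genuinely delicate point is to check that the elliptic weblike subgraphs on the $N$ side, those whose associated web must actually be reduced, are accounted for with exactly the multiplicities dictated by rule (3). Organizing and confirming this correspondence is where the effort lies.
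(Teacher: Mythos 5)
Your proposal follows the same strategy as the paper's proof: reduce to local moves via Theorem~\ref{thm:Pos}, dispose of gauge equivalences and moves (M2), (R1)--(R3) by elementary exponent bookkeeping, and then reduce the square move (M1) to a finite local computation on the bare $4$-vertex square graph, justified by confluence of web reduction (you phrase this as locality of the square region through its four legs, which is the same point). The paper closes the finite check by invoking the tables of Example~\ref{ex:big} together with a $90^\circ$ rotation symmetry reducing the verification to $F_{D'}(N)(a,b,c,d)=(ac+bd)^3 F_D(N)(d',a',b',c')$; your write-up acknowledges the same finite case analysis but leaves it as an indicated computation rather than citing the tables and the symmetry, so it is correct but slightly less explicit at the last step.
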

\begin{proof}
By Theorem \ref{thm:Pos}, it suffices to consider the gauge equivalences and local moves (M1-2) and (R1-3).  Suppose $N'$ is obtained from $N$ by multiplying all edge weights incident to a vertex $v$ by $\alpha$.  Then $\tM(N') = \alpha\tM(N)$ and $F_D(N') = \alpha^3F_D(N)$ for any $D$.  The moves (M2) and (R1-3) are similarly easy.

Suppose $N'$ is obtained from $N$ by applying the spider/square move (M1) in Section \ref{sec:moves}.  Due to the confluence of reduction, to find the relationship between $F_D(N')$ and $F_D(N)$ it suffices to compute $F_D$ for $N$ and $N'$ being the two graphs
\begin{center}
\begin{tikzpicture}[scale=0.7]
\coordinate (b) at (45:1);
\coordinate (w) at (135:1);
\coordinate (b1) at (225:1);
\coordinate (w1) at (315:1);
\node at (135:2.3) {$1$} ;
\node at (45:2.3) {$2$} ;
\node at (315:2.3){$3$} ;
\node at (225:2.3){$4$} ;
\draw (0,0) circle (2);
\draw (w) -- node[above]{$a$} (b) --node[right]{$b$} (w1) --node[below]{$c$} (b1) --node[left]{$d$} (w);
\draw (w) -- (135:2);
\draw (b) -- (45:2);
\draw (w1) --  (315:2);
\draw (b1) --  (225:2);
\filldraw [white] (w) circle (0.1cm);
\filldraw [black] (b) circle (0.1cm);
\draw (w) circle (0.1cm);
\filldraw [white] (w1) circle (0.1cm);
\filldraw [black] (b1) circle (0.1cm);
\draw (w1) circle (0.1cm);

\begin{scope}[{shift={(5,0)}}]

\coordinate (w) at (45:1);
\coordinate (b) at (135:1);
\coordinate (w1) at (225:1);
\coordinate (b1) at (315:1);
\node at (135:2.3) {$1$} ;
\node at (45:2.3) {$2$} ;
\node at (315:2.3){$3$} ;
\node at (225:2.3){$4$} ;
\draw (0,0) circle (2);
\draw (w) -- node[above]{$c'$} (b) --node[left]{$b'$} (w1) --node[below]{$a'$} (b1) --node[right]{$d'$} (w);
\draw (b) -- (135:2);
\draw (w) -- (45:2);
\draw (b1) --  (315:2);
\draw (w1) --  (225:2);
\filldraw [white] (w) circle (0.1cm);
\filldraw [black] (b) circle (0.1cm);
\draw (w) circle (0.1cm);
\filldraw [white] (w1) circle (0.1cm);
\filldraw [black] (b1) circle (0.1cm);
\draw (w1) circle (0.1cm);
\end{scope}
\end{tikzpicture}
\end{center}
where $a,b,c,d$ and $a',b',c',d'$ are related as in the local move (M1), see Section \ref{sec:moves}.  We compute directly that $\tM(N) = (ac+bd) \tM(N')$.  

To check the statement in theorem, we use the following symmetry.  Suppose $D'$ is obtained from $D$ by 90 degree rotation, sending each boundary vertex $i$ to $i+1 \mod 4$.  Then
$F_{D'}(N')(a',b',c',d') = F_D(N)(d',a',b',c')$.  Thus it suffices to check that for every $D$ we have
$$
F_{D'}(N)(a,b,c,d) = (ac+bd)^3 F_D(N)(d',a',b',c').
$$  
This follows from the tables in Example \ref{ex:big}.
\end{proof}

The non-trivial local move (M1) for planar bipartite graphs involves a square shape, as does the most interesting reduction move for webs.  The calculation in Proposition \ref{prop:web} relates these two moves.  As a corollary, we obtain

\begin{corollary}
For each $D \in \D_{k,n}$, the function $F_D(N)$ depends only on $\tM(N) \in \tGr(k,n)$, and is a degree three element of the homogeneous coordinate ring $\C[\Gr(k,n)]$.
\end{corollary}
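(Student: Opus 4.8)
The first assertion---that $F_D(N)$ depends only on $\tM(N)$---is already essentially contained in Proposition~\ref{prop:web}. Indeed, if $N$ and $N'$ satisfy $\tM(N)=\tM(N')$ as points of the cone (and not merely as points of the Grassmannian), then the scalar $\alpha$ with $\tM(N)=\alpha\,\tM(N')$ is $1$, so Proposition~\ref{prop:web} gives $F_D(N)=F_D(N')$. By the surjectivity half of Theorem~\ref{thm:matchingplucker}, every point of $\tGr(k,n)_{\geq 0}$ is of the form $\tM(N)$, so $F_D$ is a well-defined function on $\tGr(k,n)_{\geq 0}$; and the same proposition shows it is homogeneous of degree three, since scaling a point of the cone by $\alpha$ scales $F_D$ by $\alpha^3$.

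It remains to see that this function is the restriction to $\tGr(k,n)_{\geq 0}$ of an element of the degree three part of $\C[\Gr(k,n)]$, i.e. of a polynomial in the Pl\"ucker coordinates. The plan is to imitate the two-column argument of Proposition~\ref{prop:TL} and its corollary. First I would prove the triple-dimer expansion
\[
\Delta_I(N)\,\Delta_J(N)\,\Delta_K(N)=\sum_{D\in\D_{k,n}} a(I,J,K,D)\,F_D(N)
\]
(Theorem~\ref{thm:triple}) by overlaying three dimer configurations $\Pi_1,\Pi_2,\Pi_3$ with prescribed boundary subsets $I,J,K$: the overlay is a weblike subgraph $G$, its web $W(G)$ admits a ``consistent labelling'' by $(I,J,K)$, and conversely every consistently labelled weblike subgraph arises this way, the coefficient $a(I,J,K,D)$ counting consistent labellings of the non-elliptic web $D$. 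Then I would restrict to \emph{standard triples} $(I,J,K)$, the triples of $k$-subsets forming the columns of a semistandard tableau of shape $3^k$ with entries in $[n]$; by standard monomial theory these index a basis $\{\Delta_I\Delta_J\Delta_K\}$ of the degree three part of $\C[\Gr(k,n)]$.

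The crux is then to show, in parallel with Proposition~\ref{prop:TL}, that there is a bijection $\theta_3$ from standard triples to $\D_{k,n}$ and a partial order on standard triples for which the matrix $\big(a(I,J,K,D)\big)$ is unitriangular, say
\[
\Delta_I\Delta_J\Delta_K=F_{\theta_3(I,J,K)}+\sum_{(I',J',K')<(I,J,K)}F_{\theta_3(I',J',K')}.
\]
Inverting this unitriangular integer matrix writes each $F_D$ as an integer combination of standard degree three monomials $\Delta_I\Delta_J\Delta_K$, hence as a degree three element of $\C[\Gr(k,n)]$; and since $\tGr(k,n)_{\geq 0}$ is Zariski dense in $\tGr(k,n)$, this polynomial is unique and the dependence of $F_D$ on $\tM(N)$ extends from $\tGr(k,n)_{\geq 0}$ to all of $\tGr(k,n)$.

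The main obstacle is this last step. In the two-column case the bijection $\theta$ and the partial order came from a transparent Dyck-path picture; here one needs the more delicate combinatorics of Kuperberg's and Khovanov--Kuperberg's non-elliptic $A_2$-webs---a ``growth algorithm'' (or minimal-cut / lattice-word) description of $\D_{k,n}$ and of the number $a(I,J,K,D)$ of consistent labellings. Concretely one needs the enumerative input that $|\D_{k,n}|$ equals the number of semistandard tableaux of shape $3^k$ with entries in $[n]$, a choice of leading web $\theta_3(I,J,K)$ with $a(I,J,K,\theta_3(I,J,K))=1$, and a partial order in which every other web contributing to $\Delta_I\Delta_J\Delta_K$ is strictly smaller. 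This is the part I expect to require real work, and it is naturally carried out together with the proofs of Theorem~\ref{thm:triple} and Theorem~\ref{thm:web}.
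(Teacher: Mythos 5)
Your handling of the first assertion matches the paper: Proposition~\ref{prop:web} with $\alpha=1$ gives that $F_D(N)$ depends only on $\tM(N)$, and the $\alpha^3$ scaling gives degree-three homogeneity. For the second assertion the paper offers nothing beyond citing Proposition~\ref{prop:web}, and you are right to sense that more is needed: knowing $F_D$ is a well-defined, degree-three-homogeneous function on the dense locus $\tGr(k,n)_{\geq 0}$ does not by itself make it a polynomial in the Pl\"ucker coordinates. Your plan---prove Theorem~\ref{thm:triple} as an identity of polynomials in edge weights, then invert a unitriangular transition to write each $F_D$ as an integer combination of Pl\"ucker monomials, and finally extend by Zariski density---is a sound way to close this.

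Where you diverge from the paper is in positing a web analogue of Proposition~\ref{prop:TL}: a bijection $\theta_3$ from standard triples to $\D_{k,n}$ together with unitriangularity of the matrix $a(I,J,K;D)$. The paper never constructs this, and you correctly flag it as genuinely new combinatorics of growth-algorithm / Khovanov--Kuperberg type. A route closer to what the paper actually proves is available and worth noting: Theorem~\ref{thm:triple} is established as an edge-weight polynomial identity independent of this corollary, and the inductive bridge argument in the proof of Theorem~\ref{thm:web} shows---again without invoking the present corollary---that the $F_D(N)$ for a top-cell graph are linearly independent polynomials in the edge weights. Together these give $\mathrm{span}\{\Delta_I\Delta_J\Delta_K\}\subseteq\mathrm{span}\{F_D\}$ with the $F_D$ independent, so one can invert the transition matrix $a$ once one also knows $|\D_{k,n}|=\dim\C[\Gr(k,n)]_3$. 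But that count appears in the paper only \emph{after} Theorem~\ref{thm:web}, in a form that seems to presuppose the present corollary. So either your unitriangularity program, or an independent appeal to Kuperberg's confluence and the standard $sl_3$ invariant-theory dimension count (suitably adapted to the tagged boundary vertices the paper allows), is genuinely required; the paper leaves this step implicit.
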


\subsection{Restriction to positroid varieties}

Let $N$ be a planar bipartite graph.  Define
$$
\D(N) := \{D \in \D_{k,n} \mid F_D(N) \neq 0\}.
$$
If $\M$ is a positroid of rank $k$ on $[n]$, then we define $\D(\M) = \D(N)$ where $N$ represents $\M$.

\begin{lemma}
$\D(\M)$ does not depend on the choice of $N$.
\end{lemma}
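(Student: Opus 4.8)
The plan is to mirror the proof of the preceding lemma for $\A(\M)$, with Proposition~\ref{prop:web} playing the role of Proposition~\ref{prop:tau}. Concretely, I would reduce the statement to two assertions: (i) $\D(N)$ depends only on the point $M(N)$, and (ii) for a fixed underlying unweighted planar bipartite graph, $\D(N)$ does not depend on the choice of positive edge weights; and then glue these together using Theorem~\ref{thm:cell}.

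Assertion (i) is immediate from Proposition~\ref{prop:web}: if $M(N)=M(N')$ then $F_D(N)=\alpha^3 F_D(N')$ with $\alpha\neq 0$, so $F_D(N)\neq 0$ iff $F_D(N')\neq 0$, i.e.\ $\D(N)=\D(N')$. The point that requires a moment's thought—and the only place the argument genuinely differs from the Temperley--Lieb case—is assertion (ii). Here I would observe that, viewed as a polynomial in the edge weights of $N$, the function $F_D(N)=\sum_G W(G)_D\,\wt(G)$ has \emph{nonnegative} coefficients. This rests on two ingredients. First, $\wt(G)$ is itself a polynomial in the edge weights with nonnegative (indeed positive integer) coefficients: each boundary path, internal cycle, and dipole contributes a sum of monomials with coefficient $1$ by the definitions in Section~3.2. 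Second, $W(G)_D\geq 0$ for every weblike subgraph $G$ and every non-elliptic $D$, because in the normalization adopted here (following \cite{Pyl}) the three reduction rules have coefficients $3$, $2$, and $1+1$ respectively—all nonnegative—so reducing any web to a combination of non-elliptic webs produces only nonnegative integer coefficients, and by confluence (Kuperberg) these coefficients are well defined. Consequently $F_D(N)\geq 0$ on $\tGr(k,n)_{\geq 0}$, and more to the point, for positive real weights $F_D(N)$ vanishes if and only if the polynomial is identically zero—a condition that depends only on the underlying unweighted graph.

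To finish, given any planar bipartite graph $N$ with $\M_{M(N)}=\M$, we have $M(N)\in(\Pi_\M)_{>0}$, so by Theorem~\ref{thm:cell}(2) there is $t^*\in\R_{>0}^d$ with $M(N)=M(N_\M(t^*))$; assertion (i) then gives $\D(N)=\D(N_\M(t^*))$. By assertion (ii) applied to the fixed unweighted graph underlying $N_\M$, the set $\D(N_\M(t))=\{D\in\D_{k,n}\mid F_D(N_\M(t))\not\equiv 0 \text{ as a polynomial in } t\}$ is independent of $t\in\R_{>0}^d$. Hence $\D(N)$ equals this $\M$-dependent set, which proves the lemma, and also shows $\D(N)=\D(\M(N))$ in general.

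I do not expect a serious obstacle: the argument is a routine transfer of the $\A(\M)$ proof. The one step that must be recorded carefully is the sign check giving $W_D\geq 0$ for the normalization of the reduction rules used in this paper; once that is in hand, everything else is formal, relying only on Proposition~\ref{prop:web} and Theorem~\ref{thm:cell}.
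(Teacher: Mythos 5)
Your proposal is correct and follows the same strategy as the paper. The paper's own proof of this lemma is a one-liner ("Follows immediately from Proposition~\ref{prop:web}"), which implicitly invokes the same three-step reduction used in the paper's proof of the analogous lemma for $\A(\M)$: (i) invariance of $\D(N)$ under $M(N)=M(N')$ via Proposition~\ref{prop:web}, (ii) independence of $\D(N)$ from the choice of positive edge weights for a fixed unweighted graph, and (iii) Theorem~\ref{thm:cell} to funnel everything through the canonical parametrized graph $N_\M(t)$. You reproduce exactly this skeleton; the one thing you do more carefully than the paper is justify step (ii) by checking that the coefficients $W_D$ are nonnegative — which holds in this paper's normalization since the three reduction moves have coefficients $3$, $2$, and $1+1$ — so that $F_D$ is a polynomial with nonnegative coefficients in the edge weights and hence vanishes at one positive point iff it vanishes identically. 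That nonnegativity check is indeed the only nontrivial ingredient the paper leaves implicit, and recording it is worthwhile.
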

\begin{proof}
Follows immediately from Proposition \ref{prop:web}.
\end{proof}

\begin{remark}
It would be interesting to find the analogue of Oh's theorem (see Remark \ref{rem:TL}), and the analogues of Proposition \ref{prop:TL} and Conjecture \ref{conj:TL} for $\D_{k,n}$ and $\D(\M)$.
\end{remark}

\begin{theorem}\label{thm:web}
The set
$$
\{F_{D} \mid D \in \D(\M)\}
$$
is a basis for the space of functions on $\Pi_\M$ spanned by $\{\Delta_I\Delta_J\Delta_K\}$.  Equivalently, this set forms a basis for the degree three component of the homogeneous coordinate ring $\C[\Pi_\M]$.
\end{theorem}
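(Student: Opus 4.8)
The plan is to follow the proof of Theorem~\ref{thm:TLbasis} almost verbatim, the one genuinely new ingredient being a degree three analogue of Proposition~\ref{prop:bridge}. For spanning: by \cite{KLS} every element of $\C[\Pi_\M]$ is a restriction of an element of $\C[\Gr(k,n)]$, so the degree three part of $\C[\Pi_\M]$ is the image of the degree three part of $\C[\Gr(k,n)]$, which is spanned by the products $\Delta_I\Delta_J\Delta_K$ (this identifies the two descriptions in the statement). By Theorem~\ref{thm:triple} each $\Delta_I\Delta_J\Delta_K$ is an integer combination of the web immanants $F_D$, so $\{F_D \mid D \in \D_{k,n}\}$ already spans. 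Since each $F_D$ is a well-defined function on $\tGr(k,n)$ (corollary to Proposition~\ref{prop:web}) given by a weight generating function with nonnegative coefficients, and $(\Pi_\M)_{>0}$ is Zariski dense in $\Pi_\M$, the restriction of $F_D$ to $\Pi_\M$ vanishes precisely when $F_D(N) = 0$ for a planar bipartite graph $N$ representing $\M$, i.e.\ precisely when $D \notin \D(\M)$. Hence $\{F_D \mid D \in \D(\M)\}$ spans, and only linear independence remains.

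For independence I would induct first on $n$ and then on $d = \dim\Pi_\M$. If $\Pi_\M$ is a point, a lollipop graph $N$ represents $\M$, its unique weblike subgraph consists of tripled lollipop edges, $\D(\M)$ is the single resulting (tagged) web, and $F_D(N)$ is a single nonzero monomial, so the base case holds. For the inductive step invoke the bridge/lollipop recursion (Theorem~\ref{thm:bridgerecursion}). In the lollipop case $N$ is built from a graph $N'$ on $n-1$ boundary vertices by inserting a weight-$1$ lollipop; every weblike subgraph of $N$ is one of $N'$ together with a forced tripled lollipop edge, yielding a bijection $\D(\M)\leftrightarrow\D(\M')$ with $F_D(N)=F_{D'}(N')$, so the inductive hypothesis for $\M'$ finishes this case. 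In the bridge case $N$ is built from $N'$ with $\dim\Pi_{\M'}=d-1$ (same $n$) by adding a bridge of weight $t$, black at $i$ and white at $i+1$.

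The main obstacle is the degree three analogue of Proposition~\ref{prop:bridge}: an explicit expansion
$$
F_D(N) \;=\; F_D(N') \;+\; \sum_{j=1}^{3} t^{\,j}\sum_{D' \in \D_{k,n}} c^{(j)}_{D,D'}\,F_{D'}(N'),
$$
with nonnegative (rational, harmlessly) coefficients $c^{(j)}_{D,D'}$, where $F_{D'}=F_{D'}(N')$. This is a case-by-case computation, more elaborate than Proposition~\ref{prop:bridge} because one must track how the bridge edge is used by $0,1,2$, or $3$ of the three dimer configurations, how each case modifies the weblike subgraph and hence the web $W(G)$ locally near $i$ and $i+1$, and then apply Kuperberg's reduction rules; confluence of the reduction together with the fact that $F_D$ depends only on $\tM$ keep the bookkeeping consistent, and, just as in Proposition~\ref{prop:web}, it suffices to carry out a small model computation localized at the bridge. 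From this expansion I need only two features: (i) the constant term is $F_D(N')$, which with the nonnegativity of web immanants gives $\D(\M')\subseteq\D(\M)$; and (ii) for each $j\in\{1,2,3\}$ the expansion is ``unitriangular enough'' that, reading any relation in the basis $\{F_{D'}(N')\mid D'\in\D(\M')\}$, one recovers $a_D=0$ for every $D$ whose $t^j$-coefficient in $F_D(N)$ is nonzero; this will follow from the explicit local form of the bridge moves on webs exactly as in Proposition~\ref{prop:bridge}.

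Granting the expansion, the conclusion is formal and identical to Theorem~\ref{thm:TLbasis}. A dense subset of $\Pi_{\M'}$ is parametrized by edge weights $t_1,\dots,t_{d-1}$ of $N'$; by induction $\{F_{D'}(N')\mid D'\in\D(\M')\}$ are linearly independent polynomials in these variables spanning a space $V$, and $\{F_D(N)\mid D\in\D(\M)\}$ are polynomials in $t$ of degree $\le 3$ with coefficients in $V$. Suppose $\sum_{D\in\D(\M)} a_D F_D(N)=0$. The $t^0$-coefficient gives $\sum_{D\in\D(\M)} a_D F_D(N')=0$, in which the terms with $D\notin\D(\M')$ vanish, so $a_D=0$ for $D\in\D(\M')$ by induction; then the coefficients of $t^1$, $t^2$, $t^3$ in turn, combined with property (ii) and the inductive linear independence of the $F_{D'}(N')$, force $a_D=0$ whenever the respective power of $t$ appears in $F_D(N)$. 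Since by definition every $D\in\D(\M)$ has a nonzero coefficient for at least one power of $t$ among $t^0,\dots,t^3$, all $a_D$ vanish, completing the induction and the proof.
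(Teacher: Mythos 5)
Your overall strategy agrees with the paper's: spanning via Theorem~\ref{thm:triple}, independence by induction first on $n$ and then on $\dim\Pi_\M$ using the bridge/lollipop recursion, reading off coefficients of powers of $t$. The lollipop step and the constant-term argument are exactly as in the paper. The gap is in the bridge step. You posit an explicit expansion $F_D(N)=F_D(N')+\sum_{j\ge 1}t^j\sum_{D'}c^{(j)}_{D,D'}F_{D'}(N')$ with a unitriangular-like structure, and you claim it will follow ``from the explicit local form of the bridge moves on webs exactly as in Proposition~\ref{prop:bridge}.'' But the degree three setting has a genuinely new difficulty absent from the degree two case: attaching the bridge to a non-elliptic $D'$ can create an \emph{elliptic} web, which then must be Kuperberg-reduced, and the reduction is not local to the bridge --- whether it occurs depends on what $D'$ already does near $i$ and $i+1$ (e.g., if $D'$ has a path between $i$ and $i+1$, the bridge forms a bigon or square). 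Consequently the naive $D\leftrightarrow(D',j)$ correspondence breaks down and the matrix $(c^{(j)}_{D,D'})$ is not unitriangular. There is a second, related subtlety you did not flag: $D'$ is not uniquely determined by $D$; when $D$ uses neither $i$ nor $i+1$, the same $D$ can arise from two distinct $D'$ (with $e$ an isolated dipole versus $e$ in a component hitting both $i$ and $i+1$).

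The paper avoids trying to write an explicit transition matrix. It instead isolates the problem: call $D'\in\D(N')$ \emph{stable} if either $D'$ misses $i$ and $i+1$, or joining $i$ and $i+1$ keeps $D'$ non-elliptic. Lemma~\ref{lem:stable} shows every $D\in\D(N)\setminus\D(N')$ arises from a stable $D'$ by adding $e$ and a subset of $\{e_i,e_{i+1}\}$. One then works modulo $V'=\mathrm{span}(F_{D'}(N')\mid D'\text{ unstable})$, and Lemma~\ref{lem:stableadd} gives the crucial combinatorial input: modulo $V'\otimes\C[t]$, each $p_D(t)$ is either $t^aF_{D'}$ (unique stable $D'$, single power) or $t^3F_{D'}+t^aF_{D''}$ (the two-preimage case, but with \emph{distinct} powers of $t$), and each stable $D'$ appears in at most one $p_D(t)$ per power of $t$. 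That weaker ``one $a_D$ per stable $D'$ per power of $t$'' statement is what replaces unitriangularity; it is enough because, modulo $V'$, the stable $F_{D'}(N')$ with $D'\in\D(\M')$ remain linearly independent by induction. Your property (ii) is not something you can extract from a bridge-local model computation; you need the stable/unstable dichotomy and the distinct-$t$-powers observation of Lemma~\ref{lem:stableadd}.
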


Let $N$ be obtained from $N'$ by adding a bridge $e$,  black at $i$ to white at $i+1$.  Denote the edges joining $e$ to the boundary vertex $i$ (resp. $i+1$) by $e_i$ (resp. $e_{i+1}$), as illustrated here:
\begin{center}
\begin{tikzpicture}
\draw (1,0) -- (4,0);
\draw (2,0) -- (2,2);
\draw (3,0) -- (3,2) ;
\draw (2,1) -- (3,1);
\filldraw [white] (2,1) circle (0.1cm);
\filldraw [black] (3,1) circle (0.1cm);
\draw (2,1) circle (0.1cm);
\node at (2,-0.2) {$i+1$};
\node at (3,-0.2) {$i$};
\node at (2.5,1.2) {$e$};
\node at (1.7,0.5) {$e_{i+1}$};
\node at (3.3,0.5) {$e_{i}$};
\end{tikzpicture}
\end{center}

The following is straightforward to check.

\begin{lemma}\label{lem:stableadd}
Suppose $D' \in \D(N')$ and $G'$ represents $D'$.  Let $G \subset N$ be weblike such that $G \cap N' = G'$ and $G \setminus G'$ contains $e$. Then 
$$
G \setminus G' = \begin{cases} \{e\} \text{ or } \{e,e_i,e_{i+1}\} &\mbox{$i$ is a source in $D'$ and $i+1$ is a sink in $D'$} \\
\{e\}  &\mbox{$i$ is a sink in $D'$ and $i+1$ is a source in $D'$} \\
\{e,e_i\} & \mbox{$i$ and $i+1$ are sources in $D'$} \\
\{e,e_{i+1}\} & \mbox{$i$ and $i+1$ are sinks in $D'$} \\
\{e, e_i\} & \mbox{$i$ is not used and not tagged, but $i+1$ is a source in $D'$} \\
\{e, e_{i+1}\} & \mbox{$i+1$ is not used and not tagged, but $i$ is a sink in $D'$} \\
\{e, e_{i}\}  \text{ or }  \{e,e_i,e_{i+1}\}& \mbox{$i$ is not used and not tagged, but $i+1$ is a sink in $D'$}\\
\{e, e_{i+1}\} \text{ or }  \{e,e_i,e_{i+1}\}& \mbox{$i+1$ is not used and not tagged, but $i$ is a source in $D'$} \\
\{e\} & \mbox{$i$ and $i+1$ are not used and not tagged in $D'$}
\end{cases}
$$
Furthermore, other $D'$ cannot occur in this way.
\end{lemma}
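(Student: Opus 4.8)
The plan is a purely local argument at the boundary vertices $i$ and $i+1$, using only the definition of a weblike subgraph together with the explicit shape of the bridge gadget. First I would fix the local picture of $N$: the bridge edge $e$ has a black endpoint $b$ and a white endpoint $w$; apart from $e$, the vertex $b$ is incident only to $e_i$ and to one further edge $g_i$ running into the rest of $N$, and $w$ is incident only to $e_{i+1}$ and to one further edge $g_{i+1}$ (after absorbing any degree-two vertices that were inserted to keep $N$ bipartite, which affects nothing). Since a weblike $G$ must use every interior vertex, $b$ and $w$ each have degree $2$ or $3$ in $G$; because $e\in G$ this already forces $G\setminus G'\subseteq\{e,e_i,e_{i+1}\}$, and the membership in $G$ of $g_i$ (respectively $g_{i+1}$) is determined by $G'$ — namely it records whether boundary vertex $i$ (respectively $i+1$) is used in $G'$. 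Thus the problem collapses to deciding, for each type that $i$ and $i+1$ can have with respect to $D'$ (used as a source of $D'$, used as a sink, unused-and-untagged, or unused-and-tagged), which of the sets $\{e\}$, $\{e,e_i\}$, $\{e,e_{i+1}\}$, $\{e,e_i,e_{i+1}\}$ can be $G\setminus G'$ while keeping $G$ weblike.

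The next step is to translate the type at $i$ into a constraint on the local edge set at $b$. The only genuine restriction is the weblike condition that two trivalent vertices joined by a path of bivalent vertices have opposite colours — equivalently, in the web $W(G)$ each edge is consistently oriented toward white trivalent vertices. Since $b$ is black, adding $e_i$ attaches $i$ to the black vertex $b$ and makes $i$ a sink of $W(G)$, while omitting $e_i$ leaves $b$ bivalent so that $i$ simply inherits its type through $g_i$ from $D'$. I would then run the short case check: if $i$ is a source of $D'$, both options for $e_i$ keep $G$ weblike (adding $e_i$ turns $i$ into a sink, which is allowed, and omitting it keeps the bivalent path coming from $D'$); if $i$ is a sink of $D'$, adding $e_i$ would place two black vertices at the two ends of a bivalent path, which is forbidden, so only $e_i\notin G$ survives; if $i$ is unused-and-untagged, $g_i\notin G$ forces $e_i\in G$ to give $b$ degree two, and the result is weblike; and if $i$ is tagged, tracing the colour parity through $g_i$ (which in that subcase carries an additional degree-two vertex) shows $b$ cannot be completed to a legal interior vertex, so no weblike $G$ exists. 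The symmetric analysis at the white vertex $w$ controls $i+1$, with the words source and sink exchanged. Intersecting the surviving options for $i$ with those for $i+1$ reproduces precisely the nine cases of the lemma, and the impossibility of the tagged subcases is exactly the final ``other $D'$ cannot occur'' assertion.

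The remaining loose end is to confirm weblikeness in the cases where both $e_i$ and $e_{i+1}$ are adjoined (those carrying the alternative $\{e,e_i,e_{i+1}\}$): here one of $b,w$ becomes trivalent, and I would check that attaching the two fresh boundary leaves $i,i+1$ does not convert a boundary path of $G'$ into a closed cycle or produce a trivalent vertex of the wrong colour — which is immediate since the modification is confined to the gadget. The step I expect to be the real obstacle is the colour/parity dictionary of the second paragraph: making precise how ``source / sink / unused-untagged / unused-tagged at $i$'' constrains $g_i$ at $b$, correctly accounting both for the string of bivalent vertices that may separate $b$ from the first trivalent vertex on the $N'$-side and for the parity shift caused by any degree-two vertex inserted to keep $N$ bipartite. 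Once that dictionary is written down, each of the nine cases — and each excluded case — is a one-line verification.
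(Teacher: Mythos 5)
There is a genuine gap in the case check: the constraints at $i$ and at $i+1$ are \emph{not} independent, so you cannot determine membership of $e_i$ by looking only at the type of $i$ and then intersect with the analogous determination for $e_{i+1}$. The weblike parity/colour condition applies to the whole bivalent path running through $b$, $e$, $w$, and into $G'$, so the choice at $b$ constrains the choice at $w$ and vice versa. Concretely, in the case where $i$ is a source and $i+1$ is a sink in $D'$, your rules say ``$e_i$ free, $e_{i+1}$ free,'' giving all four subsets $\{e\}$, $\{e,e_i\}$, $\{e,e_{i+1}\}$, $\{e,e_i,e_{i+1}\}$; but the lemma allows only $\{e\}$ and $\{e,e_i,e_{i+1}\}$. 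The set $\{e,e_i\}$ is excluded because making $b$ trivalent (black) while leaving $w$ bivalent creates a bivalent path from $b$ through $e$, $w$, $g_{i+1}$ to the nearest trivalent vertex on the $i+1$ side, which is black since $i+1$ is a sink --- two black trivalent vertices joined by a bivalent path, violating weblikeness. Similarly, in the cases ($i$ source, $i+1$ source) and ($i$ sink, $i+1$ sink) your intersection produces $\{e\}$ as an option, which the lemma forbids for the same parity reason. So the claim that ``intersecting the surviving options for $i$ with those for $i+1$ reproduces precisely the nine cases'' is false; roughly half the rows come out wrong.

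A secondary but related slip: you assert that ``$b$ and $w$ each have degree $2$ or $3$ in $G$'' because weblike subgraphs use every interior vertex. But a weblike subgraph is allowed to contain isolated edges (dipoles), whose endpoints have degree $1$. When neither $i$ nor $i+1$ is used in $D'$, the edge $e$ can be exactly such a dipole, which is why the lemma's last row gives $G\setminus G'=\{e\}$; your rule ``unused-and-untagged forces $e_i\in G$ (resp.\ $e_{i+1}\in G$)'' would instead force $\{e,e_i,e_{i+1}\}$. To repair the argument you need to do a genuinely two-sided case analysis at the pair $(b,w)$: first allow the dipole possibility, then for each of the non-dipole degree patterns of $b$ and $w$ trace the unique bivalent path containing $e$ and check the colour of the trivalent vertex (or boundary leaf) reached on each side, recording which combinations of types of $D'$ at $i$ and $i+1$ are compatible. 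That coupled check, rather than the one-vertex-at-a-time dictionary, is what produces exactly the paper's table.
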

We illustrate the two possibilities where $i$ is not used and not tagged (so the boundary vertex $i$ in $D'$ is not incident to any edges in $G'$), but $i+1$ is a sink in $D'$.  Here the blue edges are the ones in $G \setminus G'$.
\begin{center}
\begin{tikzpicture}
\coordinate (i) at (3,0);
\coordinate (i1) at (2,0);
\coordinate (b) at (3,1);
\coordinate (w) at (2,1);
\draw[thick,->-] (1,2) -- (w);
\draw (1,0) -- (4,0);
\draw[dashed] (i1) -- (w);
\draw[blue,line width =0.08cm] (i) -- (b) ;
\draw[blue,line width =0.08cm] (w) -- (b);
\filldraw [white] (w) circle (0.1cm);
\filldraw [black] (b) circle (0.1cm);
\draw (w) circle (0.1cm);
\node at (2,-0.2) {$i+1$};
\node at (3,-0.2) {$i$};
\node at (2.5,1.2) {$e$};
\node at (1.7,0.5) {$e_{i+1}$};
\node at (3.3,0.5) {$e_{i}$};
\begin{scope}[shift={(5,0)}]
\coordinate (i) at (3,0);
\coordinate (i1) at (2,0);
\coordinate (b) at (3,1);
\coordinate (w) at (2,1);
\draw[thick,->-] (1,2) -- (w);
\draw (1,0) -- (4,0);
\draw[blue,line width =0.08cm] (i1) -- (w);
\draw[blue,line width =0.08cm] (i) -- (b) ;
\draw[blue,line width =0.08cm] (w) -- (b);
\filldraw [white] (w) circle (0.1cm);
\filldraw [black] (b) circle (0.1cm);
\draw (w) circle (0.1cm);
\node at (2,-0.2) {$i+1$};
\node at (3,-0.2) {$i$};
\node at (2.5,1.2) {$e$};
\node at (1.7,0.5) {$e_{i+1}$};
\node at (3.3,0.5) {$e_{i}$};
\end{scope}
\end{tikzpicture}
\end{center}
Note that the two cases are distinguished by the fact that the edge $e$ contributes $\wt(e)^2$ in the picture on the left, but contributes $\wt(e)^1$ in the picture on the right.

Abusing notation, we say that $D$ is obtained from $D'$ by adding the edges $e, e_i$, and/or $e_{i+1}$ if for a weblike graph $G'$ with $W(G') = D'$, we have $W(G) = D$ where $G$ is obtained from $G'$ by adding the same edges.  Let $D' \in D(N')$ be an irreducible web for $N'$.  We say that $D'$ is \defn{stable} if either $D'$ does not use $i$ or $i+1$, or if joining $i$ and $i+1$ does not cause $D'$ to become non-elliptic.  

\begin{lemma}\label{lem:stable}
Suppose $D \in \D(N) \setminus \D(N')$.  Then there exists a stable $D' \in \D(N')$ such that $D$ is obtained from $D'$ by adding the edge $e$, and some (possibly empty) subset of the edges $\{e_i, e_{i+1}\}$.  
\end{lemma}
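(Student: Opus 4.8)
The plan is to produce the required $D'$ by a controlled surgery on a weblike realization of $D$, and then to recognize which of the bridge edges were deleted using Lemma~\ref{lem:stableadd}.

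First I would set up a convenient realization of $D$. Since $D\in\D(N)$, the polynomial $F_D(N)$ is nonzero, so some weblike subgraph $G\subset N$ has $D$ occurring with nonzero coefficient in the reduction of its web $W(G)$; as $D$ is itself non-elliptic, $D$ is obtainable from $W(G)$ by a sequence of reduction moves, and Lemma~\ref{lem:webrepresent} then lets me replace $G$ so that $W(G)=D$ outright. Recall that $N$ and $N'$ agree away from the bridge: $N$ carries the bridge edge $e$ with endpoints $w$ (white) and $b$ (black), the edges $e_i,e_{i+1}$, and the edges joining $w$ and $b$ to the rest of the graph, and contracting $w$ and $b$ (undoing the valent-two insertions used in forming the bridge, where they were needed) recovers $N'$.

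Next I would show $e\in G$. If $e\notin G$, then since every interior vertex of $N$ is used by $G$, both $w$ and $b$ are used; lacking $e$, each of them then has degree at most two in $G$, hence is bivalent, with both of its $G$-edges among the edges of $N'$. Contracting these bivalent vertices turns $G$ into a weblike subgraph $\bar G$ of $N'$ with $W(\bar G)=W(G)=D$, so $D\in\D(N')$, contrary to assumption; hence $e\in G$. Now I would extract $D'$: form $G'$ from $G$ by deleting $e$, then deleting $e_i$ if $b$ would otherwise be left of degree one, and deleting $e_{i+1}$ if $w$ would otherwise be left of degree one. One checks that every interior vertex of the result still has degree two or three, that the colour-alternation condition still holds along maximal bivalent paths, and that path components still carry orientations; contracting $w$ and $b$ where they survive as bivalent vertices then exhibits $G'$ as a weblike subgraph of $N'$. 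Set $D':=W(G')$, so that $D'\in\D(N')$ (it is realized by $G'$). By construction $G$ is obtained from $G'$ by adding back $e$ together with some subset of $\{e_i,e_{i+1}\}$, so $D=W(G)$ is obtained from $D'$ by adding those same edges, and the options for which edges get added, as a function of how $i$ and $i+1$ sit inside $D'$, are exactly the cases listed in Lemma~\ref{lem:stableadd}.

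Finally I would verify that $D'$ is stable. If $D'$ uses neither $i$ nor $i+1$ there is nothing to prove. Otherwise, joining $i$ to $i+1$ through the bridge in $D'$ produces (after removing the bivalent images of $w$ and $b$) one of the webs constructed in Lemma~\ref{lem:stableadd}, which up to reduction carries the same local connectivity near $\{i,i+1\}$ as $D=W(G)$; since $D$ lies in $\D_{k,n}$ and is therefore non-elliptic, that join creates no contractible loop, no bigon, and no internal square, i.e.\ $D'$ is stable. I expect the main (if routine) obstacle to be the bookkeeping in the third paragraph: checking, case by case against Lemma~\ref{lem:stableadd}, that each indicated surgery really yields a weblike subgraph of $N'$ — controlling vertex degrees, the colour/parity condition along bivalent paths, and the induced orientations on the path components — and the parallel check that the corresponding join of $i$ and $i+1$ preserves non-ellipticity.
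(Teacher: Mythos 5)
Your proof follows the same route as the paper's: use Lemma~\ref{lem:webrepresent} to realize $D$ by a weblike graph $G\subset N$ with $W(G)=D$, show that $G$ must contain the bridge edge $e$ (else $D$ already lies in $\D(N')$), and then let $D'$ be the web of the restriction of $G$ to $N'$, invoking Lemma~\ref{lem:stableadd} to control which of $e,e_i,e_{i+1}$ were added. That is precisely the argument in the text.

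One small but genuine gap in your step showing $e\in G$: you assert that if $e\notin G$ then $w$ and $b$ are each ``used, degree at most two, hence bivalent,'' but a weblike subgraph also allows \emph{isolated edges (dipoles)}, so $w$ (say) could have degree one as an endpoint of the dipole $e_{i+1}$. The paper explicitly calls out this case (``if $e_{i+1}$ is an isolated dipole in $G$ then the boundary vertex $i+1$ is tagged in both $W(G')$ and $W(G)$''), and it still leads to $W(G')=W(G)$, so the conclusion survives — but your claim ``hence is bivalent'' is not justified as stated and skips this branch. Relatedly, the phrase ``both of its $G$-edges among the edges of $N'$'' is imprecise: the edges incident to $w,b$ in $N$ are not literally edges of $N'$; they correspond to edges of $N'$ only after contracting the bivalent bridge vertices, which is exactly why the dipole case needs separate treatment. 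Apart from this, your construction of $G'$ and the stability check in the final paragraph are consistent with (and spell out more of) what the paper leaves implicit via Lemma~\ref{lem:stableadd}.
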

\begin{proof}
We use Lemma \ref{lem:webrepresent} in the following.  Suppose $D \in \D(N)$ is represented by a weblike graph $G$.  Then $G' = G \cap N'$ is a weblike subgraph of $N'$.  If $G \setminus G'$ does not include the edge $e$, then we have $W(G') = W(G)$ and so $D \in \D(N')$ as well.  For example, if $e_{i+1}$ is an isolated dipole in $G$ then the boundary vertex $i+1$ is tagged in both $W(G')$ and $W(G)$.  
Thus if $D \in \D(N) \setminus \D(N')$, we must have $e \in G \setminus G'$.
\end{proof}

\begin{proof} [Proof of Theorem \ref{thm:web}]
In Theorem \ref{thm:triple} we will show that $\{F_D \mid D \in \D_{k,n}\}$ span the degree three component of $\C[\Gr(k,n)]$.  Since the restriction map $\C[\Gr(k,n)] \to \C[\Pi_\M]$ is surjective, it remains to show that $\{F_D \mid D \in \D(\M)\}$ is linearly independent in $\C[\Pi_\M]$.

The general strategy of the proof is the same as the proof of Theorem \ref{thm:TLbasis}.  We use the same setup and notation here.  Suppose there exists a linear relation
$$
\sum_{D \in \D(\M)} a_{D} p_{D}(t) = 0.
$$
The same argument as in the proof of Theorem \ref{thm:TLbasis} gives $a_D =0$ if $D \in \D(\M')$. 

Now suppose that $D \in \D(\M)\setminus \D(\M')$.  By Lemma \ref{lem:stable} there exists some stable $D' \in \D(\M')$ such that $D$ is obtained from $D'$ by adding some of the edges $e, e_i, e_{i+1}$.  Whether or not $i$ (resp. $i+1$) are used in $D$ tells us whether or not the edge $e_i$ (resp. $e_{i+1}$) is added.

We deduce that $D'$ is uniquely determined by $D$ except for one situation: when $D$ uses neither $i$ nor $i+1$, in which case $G$ could either (a) have $e$ as an isolated dipole, or (b) have $e$ belong to a component of $G'$ that uses both $i$ and $i+1$.

Let $V' = {\rm span}(F_{D'}(N') \mid D' \text{ is unstable})$.  If the stable $D'$ is uniquely determined, then we have 
\begin{equation}\label{eq:D1}
p_D(t) = t^a F_{D'} \mod V'\otimes \C[t]
\end{equation}
where $a \in \{1,2,3\}$.  In the case that the stable $D'$ is not uniquely determined we have
\begin{equation}\label{eq:D2}
p_D(t) = t^3 F_{D'} + t^a F_{D''} \mod V'\otimes \C[t]
\end{equation}
where $a \in \{1,2\}$, for stable $D', D'' \in \D(N')$.

By Lemma \ref{lem:stableadd}, each stable $D'$ either occurs once in \eqref{eq:D1} or \eqref{eq:D2}, or it occurs twice, but with different powers of $t$.  It follows that for a fixed $a \in \{1,2,3\}$, the coefficient of $F_{D'}(N')$ in $[t^a]\sum_{D \in 
\D(\M)} a_{D} p_{D}(t)$ is either $0$ or a single $a_D$.  This proves that $a_D = 0$ for all $D \in  \D(\M)\setminus \D(\M')$, as required.
\end{proof}

By standard results about the homogeneous coordinate ring $\C[\Gr(k,n)]$, we have
\begin{corollary}
The cardinality of $\D_{k,n}$ is equal to the number of semistandard tableaux of shape $3^k$, filled with numbers $\{1,2,\ldots,n\}$.
\end{corollary}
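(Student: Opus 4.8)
The plan is to obtain the count by specializing Theorem~\ref{thm:web} to the largest possible positroid, namely the uniform matroid $U_{k,n}=\binom{[n]}{k}$. This is a positroid: any totally positive $k\times n$ matrix (for instance a Vandermonde matrix built from $0<t_1<\cdots<t_n$) has all Pl\"ucker coordinates positive, so its matroid is $U_{k,n}$, and the corresponding positroid variety is the whole Grassmannian, $\Pi_{U_{k,n}}=\Gr(k,n)$. Applying Theorem~\ref{thm:web} with $\M=U_{k,n}$ then asserts that $\{F_D\mid D\in\D(U_{k,n})\}$ is a basis for the degree three component $\C[\Gr(k,n)]_3$, and in particular $|\D(U_{k,n})|=\dim\C[\Gr(k,n)]_3$.

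Next I would compute $\dim\C[\Gr(k,n)]_3$ by the classical standard monomial theory of the Pl\"ucker embedding: $\C[\Gr(k,n)]_3$ has a basis indexed by the products $\Delta_{I_1}\Delta_{I_2}\Delta_{I_3}$ with $I_1\le I_2\le I_3$ componentwise, and these are exactly the semistandard Young tableaux of shape $3^k$ (three columns, each of height $k$) with entries in $\{1,\dots,n\}$. Hence $|\D(U_{k,n})|$ equals the number of such tableaux.

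It therefore remains to identify $\D(U_{k,n})$ with all of $\D_{k,n}$, equivalently to check that no web immanant $F_D$ with $D$ non-elliptic vanishes identically; granting this, $|\D_{k,n}|$ is the asserted tableau count (and one can cross-check against Theorem~\ref{thm:triple}, which already exhibits $\{F_D\mid D\in\D_{k,n}\}$ as a spanning set of $\C[\Gr(k,n)]_3$, forcing this family to be a basis). To prove $F_D\not\equiv 0$ I would, for each non-elliptic $D$, construct a planar bipartite graph $N_D$ --- essentially $D$ itself, with boundary-to-boundary edges subdivided by bivalent interior vertices to meet the conventions on $N$, and with tagged boundary vertices realized by the appropriate leaves or unused edges --- together with a weblike subgraph $G\subset N_D$ satisfying $W(G)=D$. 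Since $D$ is non-elliptic no reduction move applies to $W(G)$, so $W(G)_D=1$, and $G$ contributes the monomial $\wt(G)$ to $F_D(N_D)$; choosing $N_D$ so that this monomial cannot be produced by any other weblike subgraph (for instance because it is the unique contribution of maximal support, or of a given multidegree) shows $F_D(N_D)\neq 0$. This last step --- making the construction $(N_D,G)$ uniform in $D$, handling tags and boundary paths (recalling that by Lemma~\ref{lem:webrepresent} reductions act on weblike subgraphs by turning odd paths into dipoles, which alters edge multidegrees), and verifying that the distinguished monomial of $F_D(N_D)$ really survives without cancellation --- is the main obstacle; everything else is a formal combination of Theorem~\ref{thm:web}, Theorem~\ref{thm:triple}, and the standard monomial basis of $\C[\Gr(k,n)]$.
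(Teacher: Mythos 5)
Your strategy is the same as the paper's: apply Theorem~\ref{thm:web} to the uniform positroid $U_{k,n}$ (for which $\Pi_{U_{k,n}}=\Gr(k,n)$), use the standard monomial basis of $\C[\Gr(k,n)]_3$ indexed by SSYT of shape $3^k$, and reduce to showing $\D(U_{k,n})=\D_{k,n}$, i.e.\ that $F_D\not\equiv 0$ for every non-elliptic $D$. The paper's phrasing (``by standard results about the homogeneous coordinate ring'') is terse and does not explicitly argue this last equality; you are right to flag it, and your proposal is more honest than the paper's one-line derivation on this point. Two remarks. First, a small simplification you can safely make: you need not worry about engineering $\M(N_D)$ to be uniform. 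Since $F_D$ is a well-defined function on $\tGr(k,n)$ (the Corollary following Proposition~\ref{prop:web}), producing \emph{any} planar bipartite $N_D$ with $F_D(N_D)\neq 0$ already gives $F_D\not\equiv 0$ on $\Gr(k,n)$; as $(\Pi_{U_{k,n}})_{>0}$ is Zariski-dense in $\Gr(k,n)$ (Theorem~\ref{thm:cell}), this is exactly $D\in\D(U_{k,n})$. Second, the construction sketch (taking $N_D$ to be $D$ with subdivided paths, showing the distinguished $G$ with $W(G)=D$ contributes a monomial that survives) has the right flavor but is the only non-formal step, and it is not carried through: you would need to verify carefully that no other weblike subgraph $G'\subset N_D$ has $W(G')_D\neq 0$ contributing the same monomial of $\wt$, that tags and boundary-to-boundary arcs are realized so that $N_D$ meets the degree-one and bipartite conventions, and that the reductions from Lemma~\ref{lem:webrepresent} (which replace odd paths by dipoles, changing multidegrees) do not reintroduce cancellations. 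As written, your argument establishes the dimension count $|\D(U_{k,n})|=\#\{\text{SSYT of shape }3^k\}$ cleanly, and correctly identifies the remaining gap; the gap is the same one the paper leaves implicit, so the two proofs are substantively the same modulo that unfinished non-vanishing step.
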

The reader is invited to check that this agrees with $|\D_{2,4}| = 50$.

\subsection{Products of three minors}
Many ideas in this section are already present in Pylyavskyy \cite{Pyl}.  Pylyavskyy works in the setting of a $n \times n$ matrix.  We have modified the results for the Grassmannian situation, and we believe also simplified the presentation.

Let $W$ be a web on $[n]$.   A \defn{labeling} $(W,\alpha)$ of $W$ is an assignment of one of the three labels $\{1,2,3\}$ to each non-isolated edge in $W$ with the property that the three edges incident to any trivalent vertex have distinct labels.  Internal loops in $W$ are labeled by a single label.  (One can also think of isolated edges as labeled by all three labels.)

\begin{lemma}
Every web $W$ has a labeling.
\end{lemma}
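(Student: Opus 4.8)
The plan is to reduce the statement to König's edge-coloring theorem for bipartite graphs. First I would discard the parts of $W$ that carry no constraint: each internal loop may be assigned an arbitrary single label in $\{1,2,3\}$, and the boundary-to-boundary edges are isolated edges, which by convention need not be labeled. Let $G$ be the graph obtained from $W$ by deleting all internal loops and all boundary-to-boundary edges. Then giving a labeling of $W$ amounts to assigning labels in $\{1,2,3\}$ to the edges of $G$ so that the three edges meeting each interior (trivalent) vertex receive distinct labels.

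Next I would observe that $G$ is bipartite --- its two color classes are the black and white vertices coming from the bipartite structure of the web --- and that it has maximum degree $3$: every interior vertex of a web is trivalent, while every boundary vertex has degree at most one in $G$ (it is either incident to a single edge running to an interior vertex, or it is tagged/unused and hence isolated). Any multiple edges between two interior vertices cause no difficulty, since the result below holds for bipartite multigraphs.

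Then I would apply König's theorem: a bipartite (multi)graph of maximum degree $\Delta$ admits a proper edge-coloring with $\Delta$ colors --- equivalently, its edge set decomposes into $\Delta$ matchings, which follows from Hall's marriage theorem applied to a $\Delta$-regular bipartite supergraph. Taking $\Delta = 3$ with color set $\{1,2,3\}$ yields an assignment of labels to the edges of $G$ in which no two edges sharing a vertex receive the same label; in particular the three edges at each trivalent vertex are pairwise distinct, which is exactly the required condition. Restoring the internal loops with arbitrary labels then gives a labeling of all of $W$.

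The only points needing care --- and there is no genuine obstacle here --- are bookkeeping: confirming that after removing loops and boundary-to-boundary edges the remaining graph really does have maximum degree $3$, so that three colors suffice, and that it remains bipartite; and noting that tagged or unused boundary vertices simply contribute isolated vertices of degree $0$ and so impose nothing.
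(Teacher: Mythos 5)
Your proof is correct, but it proceeds by a genuinely different route from the paper's. The paper argues by induction on the number of vertices: it picks two adjacent boundary vertices of a component, takes a self-avoiding path between them, inductively labels the ``subwebs hanging off'' that path, and then extends the labeling along the path. Your argument instead strips away the unconstrained pieces (internal loops, which may be labeled arbitrarily, and boundary-to-boundary arcs, which likewise carry no trivalent constraint), observes that what remains is a bipartite multigraph of maximum degree $3$ (interior vertices trivalent, boundary vertices of degree at most one), and invokes K\"onig's edge-coloring theorem to produce a proper $3$-edge-coloring, which is exactly a labeling. What your approach buys: it is shorter, it replaces an informal inductive step (``it is easy to see that there is a labeling of the edges in this path'') with a citation of a standard theorem, it makes no use of planarity, and it handles components with no boundary vertices at all (a closed trivalent bipartite component), a case the paper's path-based induction does not explicitly address. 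What the paper's approach buys: it is self-contained and elementary, requiring no external theorem. One small caveat in your write-up: you assert the boundary-to-boundary arcs ``by convention need not be labeled.'' In the paper these arcs are not the ``isolated edges'' of the labeling convention (those are the dipoles $E_i$, which are removed in passing from a weblike graph $G$ to its web $W(G)$), and such arcs do receive a label---see the consistency conditions and the examples like $(1\to 2)\cup(3\to 4)$. But since any of the three labels works on such an arc, this does not affect the validity of your argument; you should simply restore them with arbitrary labels at the end, alongside the internal loops.
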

\begin{proof}
This can be proved by induction on the number of vertices of the web $W$.  Pick two adjacent boundary vertices of some component of $W$, and find a self-avoiding path between these two vertices.  Then we get a situation that looks like:
\begin{center}
\begin{tikzpicture}
\node at (0,-0.3) {$i$};
\node at (5,-0.3) {$j$};
\draw (-2,0) -- (7,0);
\draw (0,1) -- (5,1);
\draw (0,0) -- (0,1);
\draw (5,0) -- (5,1);
\draw (0,1) -- (-1,2); 
\draw (1,1) -- (1,0.5); 
\draw (2,1) -- (2.2,2);
\draw (3,1) -- (2.9,2);
\draw (4,1) -- (4,0.5); 
\draw (5,1) -- (5.5,2);
\filldraw[white] (2.5,0.5) ellipse (1.7cm and 0.2cm);
\draw (2.5,0.5) ellipse (1.7cm and 0.2cm);

\filldraw[black] (0,1) circle (0.1cm);
\filldraw[black] (2,1) circle (0.1cm);
\filldraw[black] (4,1) circle (0.1cm);
\filldraw[white] (1,1) circle (0.1cm);
\draw (1,1) circle (0.1cm);
\filldraw[white] (3,1) circle (0.1cm);
\draw (3,1) circle (0.1cm);
\filldraw[white] (5,1) circle (0.1cm);
\draw (5,1) circle (0.1cm);
\end{tikzpicture}
\end{center}
By induction, we can first label all the webs hanging off this path.  Then it is easy to see that there is a labeling of the edges in this path.
\end{proof}

Let $(I,J,K)$ be a triple of $k$-element subsets of $[n]$ and denote by $\bar I = [n] \setminus I$ (resp. $\bar J$, resp. $\bar K$) the complement subset.  We say that a labeled web $(W,\alpha)$ is \defn{consistently labeled} with $(I,J,K)$ if for any boundary vertex $i \in [n]$, 
\begin{enumerate}
\item
if $i$ is a black sink or white source in $W$, and the edge incident to $i$ is labeled by a $1$ (resp. $2$, resp. $3$), then $i \in \bar I \cap J \cap K$  (resp. $I \cap \bar J \cap K$, resp. $I \cap J \cap \bar K$);
\item
if $i$ is a white sink or black source in $W$, and the edge incident to $i$ is labeled by a $1$ (resp. $2$, resp. $3$), then $i \in I\cap \bar J \cap \bar K$  (resp. $\bar I\cap J \cap \bar K$, resp. $\bar I\cap \bar J \cap K$);
\item
if $i$ is black and tagged or white and untagged in $W$ then $i \in I \cap J \cap K$;
\item
if $i$ is white and tagged or black and untagged in $W$ then $i \in \bar I \cap \bar J \cap \bar K$.
\end{enumerate}
Let $a(I,J,K;W)$ denote the number of consistent labelings of $W$ with $(I,J,K)$.

\begin{lemma}\label{lem:pyl}
Suppose $W = \sum_{D \in \D_{k,n}} W_D D$ is the decomposition of $W$ into non-elliptic webs.  Then
$$
a(I,J,K;W) = \sum_{D \in \D_{k,n}} W_D \; a(I,J,K;D).
$$
\end{lemma}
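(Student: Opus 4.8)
The plan is to exhibit $W\mapsto a(I,J,K;W)$, extended $\Z$-linearly to the free abelian group on all webs in $\W_{k,n}$, as a functional that annihilates each of Kuperberg's three reduction relations. Granting this and the confluence of reduction (so that $W=\sum_D W_D D$ holds in the quotient by those relations), the identity $a(I,J,K;W)=\sum_{D\in\D_{k,n}}W_D\,a(I,J,K;D)$ is immediate. Concretely: fix any terminating sequence of reduction moves from $W$ to $\sum_D W_D D$; it suffices to check that a single move multiplies $a(I,J,K;-)$ by the corresponding scalar for the loop and bigon moves, and splits it as the sum of the two values for the two resolved webs for the square move.

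Because each reduction move is \emph{local} -- it alters only a bounded gadget $U$ inside the disk, all of whose vertices are interior -- and $a(I,J,K;W)$ is a count over labelings of the \emph{whole} web, the computation localizes. Write $W=U\cup W_{\mathrm{rest}}$, glued along the (zero, two, or four) dangling legs of $U$. A consistent labeling of $W$ is the same as a labeling of $U$ together with an $(I,J,K)$-consistent labeling of $W_{\mathrm{rest}}$ agreeing on the leg labels (the conditions (1)--(4) mention only boundary vertices, hence only $W_{\mathrm{rest}}$, since $U$ is interior). Summing over the tuple $\ell$ of leg labels, $a(I,J,K;W)=\sum_\ell g_U(\ell)\,r(\ell)$, where $r(\ell)$ counts $(I,J,K)$-consistent extensions to $W_{\mathrm{rest}}$ and $g_U(\ell)$ counts labelings of $U$ with prescribed leg labels. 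A reduction move changes only $U$ while leaving $W_{\mathrm{rest}}$, the legs, and all boundary data untouched, so the whole assertion reduces to a purely local identity among the functions $g_U$.

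These local identities are small finite checks. For the loop move, $U$ has no legs, $g_U=3$ (the loop may carry any one of the three labels) against $g_U=1$ after removal -- the factor $3$ of relation (1). For the bigon move, $U$ is an incoming leg to a white trivalent vertex, two parallel edges to a black trivalent vertex, and an outgoing leg; the constraint at each trivalent vertex forces the outgoing leg to share the incoming leg's label, with the two parallel edges taking the two remaining labels in either order, so $g_U(\alpha,\beta)=2\,[\alpha=\beta]$ against $[\alpha=\beta]$ for the single edge of the reduced picture -- the factor $2$ of relation (2). For the square move, $U$ is the $4$-cycle $a$--$b$--$c$--$d$ with one dangling leg at each corner; with leg labels $\alpha,\beta,\gamma,\delta$ taken clockwise from $a$, the constraint at each corner forces the two incident square-edges to carry the two labels complementary to that corner's leg, so labelings of $U$ are exactly the proper $3$-colorings of the inner $4$-cycle whose successive edges avoid $\{\alpha,\beta\}$, $\{\beta,\gamma\}$, $\{\gamma,\delta\}$, $\{\delta,\alpha\}$; a direct enumeration gives
$$
g_U(\alpha,\beta,\gamma,\delta)=[\alpha=\beta]\,[\gamma=\delta]+[\alpha=\delta]\,[\beta=\gamma],
$$
and the two summands are exactly $g_{U_1}$ and $g_{U_2}$ for the resolution $W_1$ (joining the $\alpha,\beta$-legs and the $\gamma,\delta$-legs, forcing $\alpha=\beta$ and $\gamma=\delta$) and $W_2$ (joining the $\alpha,\delta$-legs and the $\beta,\gamma$-legs), so $a(I,J,K;W)=a(I,J,K;W_1)+a(I,J,K;W_2)$, matching relation (3).

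The step I expect to be the main obstacle is the square move: one must verify the displayed $4$-cycle coloring identity and then correctly pair its two terms with the two resolutions, while keeping track of the fact that our webs carry edge orientations and that path components get re-routed by the move. Here the point to nail down is that the orientations and tagging at boundary vertices -- the only data entering (1)--(4) -- are untouched by an interior move, so $r(\ell)$ is genuinely common to $W$, $W_1$, $W_2$; combined with the routine bookkeeping for internal loops produced by a resolution (each labeled by a single color, consistently with the loop and single-edge computations above), this finishes the proof.
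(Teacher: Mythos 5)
Your proof is correct and follows the same strategy as the paper's: localize the count of consistent labelings to the interior gadget altered by a reduction move, and verify the loop, bigon, and square moves case by case, the square move being the only substantive one. The paper's proof is terser — it merely displays two representative labelings of the square and asserts the rest is similar — whereas your local-weight function $g_U$ together with the identity $g_U(\alpha,\beta,\gamma,\delta)=[\alpha{=}\beta][\gamma{=}\delta]+[\alpha{=}\delta][\beta{=}\gamma]$ (which I checked holds in every case) packages all cases at once; this is the same argument, just written out in full.
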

\begin{proof}
The identity is checked case-by-case for each of the elementary reduction moves of Section \ref{sec:moves}.  For example, if we apply move (M1), we have
\begin{center}
\begin{tikzpicture}
\coordinate (w1) at (3,0);
\coordinate (b1) at (2,0);
\coordinate (b) at (3,1);
\coordinate (w) at (2,1);
\draw (w) -- node[above] {$3$} (b) -- node[right] {$2$} (w1) -- node[below] {$3$} (b1) -- node[left] {$2$}(w);
\draw (w) -- node[above] {$1$} (1,2);
\draw (b) -- node[above] {$1$}(4,2);
\draw (w1) -- node[below] {$1$} (4,-1);
\draw (b1) -- node[below] {$1$} (1,-1);
\filldraw [white] (w) circle (0.1cm);
\filldraw [black] (b) circle (0.1cm);
\draw (w) circle (0.1cm);
\filldraw [white] (w1) circle (0.1cm);
\filldraw [black] (b1) circle (0.1cm);
\draw (w1) circle (0.1cm);
\begin{scope}[shift={(4,0)}]
\coordinate (w1) at (3,0);
\coordinate (b1) at (2,0);
\coordinate (b) at (3,1);
\coordinate (w) at (2,1);
\draw (w) -- node[above] {$2$} (b) -- node[right] {$3$} (w1) -- node[below] {$2$} (b1) -- node[left] {$3$}(w);
\draw (w) -- node[above] {$1$} (1,2);
\draw (b) -- node[above] {$1$}(4,2);
\draw (w1) -- node[below] {$1$} (4,-1);
\draw (b1) -- node[below] {$1$} (1,-1);
\filldraw [white] (w) circle (0.1cm);
\filldraw [black] (b) circle (0.1cm);
\draw (w) circle (0.1cm);
\filldraw [white] (w1) circle (0.1cm);
\filldraw [black] (b1) circle (0.1cm);
\draw (w1) circle (0.1cm);

\end{scope}
\draw[->] (8.5,0.5) -- (9.5,0.5);
\begin{scope}[shift={(9,0)}]
\coordinate (w1) at (3,0);
\coordinate (b1) at (2,0);
\coordinate (b) at (3,1);
\coordinate (w) at (2,1);
\draw (w) -- node[above] {$1$} (b) ;
\draw (w1) -- node[below] {$1$} (b1);
\draw (w) -- (1,2);
\draw (b) -- (4,2);
\draw (w1) --  (4,-1);
\draw (b1) -- (1,-1);
\end{scope}
\begin{scope}[shift={(13,0)}]
\coordinate (w1) at (3,0);
\coordinate (b1) at (2,0);
\coordinate (b) at (3,1);
\coordinate (w) at (2,1);
\draw (b) -- node[right] {$1$} (w1);
\draw (b1) -- node[left] {$1$}(w);
\draw (w) -- (1,2);
\draw (b) -- (4,2);
\draw (w1) --  (4,-1);
\draw (b1) -- (1,-1);
\end{scope}
\end{tikzpicture}
\end{center}
when the boundary edges of the square all have the same label, and
\begin{center}
\begin{tikzpicture}
\coordinate (w1) at (3,0);
\coordinate (b1) at (2,0);
\coordinate (b) at (3,1);
\coordinate (w) at (2,1);
\draw (w) -- node[above] {$3$} (b) -- node[right] {$2$} (w1) -- node[below] {$3$} (b1) -- node[left] {$1$}(w);
\draw (w) -- node[above] {$1$} (1,2);
\draw (b) -- node[above] {$1$}(4,2);
\draw (w1) -- node[below] {$2$} (4,-1);
\draw (b1) -- node[below] {$2$} (1,-1);
\filldraw [white] (w) circle (0.1cm);
\filldraw [black] (b) circle (0.1cm);
\draw (w) circle (0.1cm);
\filldraw [white] (w1) circle (0.1cm);
\filldraw [black] (b1) circle (0.1cm);
\draw (w1) circle (0.1cm);

\draw[->] (4.5,0.5) -- (5.5,0.5);
\begin{scope}[shift={(5,0)}]
\coordinate (w1) at (3,0);
\coordinate (b1) at (2,0);
\coordinate (b) at (3,1);
\coordinate (w) at (2,1);
\draw (w) -- node[above] {$1$} (b) ;
\draw (w1) -- node[below] {$2$} (b1);
\draw (w) -- (1,2);
\draw (b) -- (4,2);
\draw (w1) --  (4,-1);
\draw (b1) -- (1,-1);
\end{scope}
\end{tikzpicture}
\end{center}
when they do not.
\end{proof}

\begin{theorem}\label{thm:triple}
As functions on the cone over the Grassmannian, we have
$$
\Delta_I \Delta_J \Delta_K = \sum_{D \in \D_{k,n}} a(I,J,K;D) F_D.
$$
\end{theorem}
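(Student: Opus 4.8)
The plan is to reduce to a combinatorial identity for a single planar bipartite graph, expand the left side over triple-dimer configurations, and reorganize the sum according to the overlaid weblike subgraph. Both sides are polynomial functions on $\tGr(k,n)$ of degree three (for $F_D$ this is the Corollary following Proposition~\ref{prop:web}). By Proposition~\ref{prop:web} together with Theorem~\ref{thm:matchingplucker}, these functions are determined by their values at the points $\tM(N)$, which are Zariski dense in $\tGr(k,n)$ since they contain the totally nonnegative part; at such a point the asserted identity reads
$$
\Delta_I(N)\,\Delta_J(N)\,\Delta_K(N) \;=\; \sum_{D \in \D_{k,n}} a(I,J,K;D)\, F_D(N),
$$
so it suffices to prove this generating-function identity for every planar bipartite $N$. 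Expanding the left side over triples gives
$$
\Delta_I(N)\,\Delta_J(N)\,\Delta_K(N) \;=\; \sum_{(\Pi_1,\Pi_2,\Pi_3)} \wt(\Pi_1)\,\wt(\Pi_2)\,\wt(\Pi_3),
$$
the sum over triples of almost perfect matchings with $I(\Pi_1)=I$, $I(\Pi_2)=J$, $I(\Pi_3)=K$.

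The heart of the argument is to group this sum by the overlay $\Pi_1\cup\Pi_2\cup\Pi_3$. A local analysis at each interior vertex (used once by each $\Pi_i$) shows every interior vertex has degree $1$, $2$, or $3$ in the overlay: a degree-three vertex is trivalent with all three incident edges \emph{single} (in exactly one $\Pi_i$ each); along a maximal path of degree-two vertices the edges alternate single/double, the double edges lying in the complementary pair of matchings; and bipartiteness forces the color-alternation condition. Hence the overlay is a weblike subgraph $G \subset N$, where cycles of the overlay are cycle components, tripled edges are isolated dipoles, and the orientation carried by a trivalent-free boundary path records which of its two ends is single (the single/double phase, which is exactly what distinguishes the two cases of $\wt(A_i)$). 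Conversely, a triple overlaying to a fixed $G$ is recovered by choosing, for each single edge, the matching it belongs to --- double edges, cycle colorings, and tripled edges being then forced --- and the requirement that the three edges at a trivalent vertex be distinct is precisely a \emph{labeling} of $W(G)$. The key bookkeeping claim is that the total weight $\sum \wt(\Pi_1)\wt(\Pi_2)\wt(\Pi_3)$ over triples overlaying to $G$ with $I(\Pi_1)=I$, $I(\Pi_2)=J$, $I(\Pi_3)=K$ equals $a(I,J,K;W(G))\,\wt(G)$: the monomial $\wt(G)$ accounts for edge multiplicities (single $\to e$, double $\to e^2$, tripled $\to e^3$, the two monomials in $\wt(C_i)$ reflecting the two phases around a cycle), while the number of labelings of $W(G)$ compatible with $(I,J,K)$ --- where the conditions (1)--(4) defining a consistent labeling match the boundary data via the definition of $I(\Pi)$ --- is $a(I,J,K;W(G))$. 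Summing over all weblike $G \subset N$ yields
$$
\Delta_I(N)\,\Delta_J(N)\,\Delta_K(N) \;=\; \sum_{G\subset N \text{ weblike}} a(I,J,K;W(G))\,\wt(G).
$$

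It remains to apply Lemma~\ref{lem:pyl}: writing $W(G) = \sum_D W(G)_D\, D$, the lemma gives $a(I,J,K;W(G)) = \sum_D W(G)_D\, a(I,J,K;D)$, so interchanging summations,
$$
\sum_{G} a(I,J,K;W(G))\,\wt(G) \;=\; \sum_{D} a(I,J,K;D) \sum_{G} W(G)_D\,\wt(G) \;=\; \sum_{D} a(I,J,K;D)\, F_D(N),
$$
the last step being the definition of $F_D(N)$ reorganized as a sum over weblike subgraphs $G$ (each weighted by $W(G)_D\,\wt(G)$). This completes the proof.

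The step I expect to be the main obstacle is the weighted correspondence between triple-dimer configurations and labeled weblike subgraphs: one must verify that every overlay is weblike and carries a canonical orientation on its trivalent-free boundary paths, that the single/double/tripled multiplicities and the treatment of tags exactly reproduce $\wt(G)$ (including the factor hidden in the two-term cycle weights), and --- most delicately --- that the conditions (1)--(4) defining a consistent labeling coincide, boundary vertex by boundary vertex, with the constraints $I(\Pi_1)=I$, $I(\Pi_2)=J$, $I(\Pi_3)=K$. This is the dimer-model version of Pylyavskyy's argument for products of three minors of a matrix.
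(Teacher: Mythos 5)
Your proposal is correct and follows essentially the same route as the paper: expand $\Delta_I\Delta_J\Delta_K$ over triples of dimer configurations, reorganize the sum by the overlaid weblike subgraph $G$ (checking that the weight and the count of labelings recover $a(I,J,K;W(G))\,\wt(G)$, with the orientation on trivalent-free paths and the two-term cycle weight accounting for the single/double phases), then apply Lemma~\ref{lem:pyl} to rewrite $a(I,J,K;W(G))$ in terms of non-elliptic webs and recognize $F_D(N)$. The only cosmetic difference is that you place the ``identity on $N$ implies identity on $\tGr(k,n)$'' density argument up front, while the paper notes it at the end.
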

\begin{proof}
Let $N$ be a planar bipartite graph with boundary vertices $[n]$.  We have
$$
\Delta_I(N) \Delta_J(N) \Delta_K(N) = \sum_{\Pi_1,\Pi_2,\Pi_3} \wt(\Pi_1) \wt(\Pi_2) \wt(\Pi_3)
$$
where the summation is over triples $(\Pi_1,\Pi_2,\Pi_3)$ of dimer configurations with boundary configurations $I(\Pi_1) = I$, $I(\Pi_2) = J$, and $I(\Pi_3) = K$ respectively.  Overlaying these dimer configurations on top of each other, we obtain a weblike subgraph $G \subset N$ and a labeling $\alpha$ of $W(G)$: isolated dipoles are edges occurring in all three dimer configurations, and for a path whose endpoints are trivalent (but other vertices are bivalent), we do the following to obtain $\alpha$:
\begin{center}
\begin{tikzpicture}
\coordinate (b) at (0,0);
\coordinate (w) at (1,0);
\coordinate (b2) at (2,0);
\coordinate (w2) at (3,0);
\coordinate (b3) at (4,0);
\coordinate (w3) at (5,0);
\draw (-0.5,0.3) -- (b) -- (-0.5 ,-0.3);
\draw (5.5,0.3) -- (w3) -- (5.5 ,-0.3);
\draw (b) -- node[above] {$1$} (w) -- node[above]{$23$} (b2)  -- node[above] {$1$} (w2) -- node[above]{$23$} (b3)  -- node[above] {$1$} (w3);
\filldraw [white] (w) circle (0.1cm);
\filldraw [black] (b) circle (0.1cm);
\draw (w) circle (0.1cm);
\filldraw [black] (b2) circle (0.1cm);
\filldraw [white] (w2) circle (0.1cm);
\draw (w2) circle (0.1cm);
\filldraw [black] (b3) circle (0.1cm);
\filldraw [white] (w3) circle (0.1cm);
\draw (w3) circle (0.1cm);
\draw [->] (6,0) -- (7,0);
\node at (2.5,-0.5) {$G$};
\begin{scope}[shift={(8,0)}]
\coordinate (b) at (0,0);
\coordinate (w) at (1,0);
\coordinate (b2) at (2,0);
\coordinate (w2) at (3,0);
\coordinate (b3) at (4,0);
\coordinate (w3) at (5,0);
\draw (-0.5,0.3) -- (b) -- (-0.5 ,-0.3);
\draw (5.5,0.3) -- (w3) -- (5.5 ,-0.3);
\node at (2.5,-0.5) {$W$};
\draw (b) -- node[above] {$1$} (w3);
\filldraw [black] (b) circle (0.1cm);
\filldraw [white] (w3) circle (0.1cm);
\draw (w3) circle (0.1cm);
\end{scope}
\end{tikzpicture}
\end{center}
Here an edge labeled by $1$ in $G$ indicates an edge that is present in only $\Pi_1$, while the edges labeled $23$ are present in both $\Pi_2$ and $\Pi_3$ but not $\Pi_1$.  Conversely, a weblike subgraph $G \subset N$ together with a consistent labeling $(W,\alpha)$ with $(I,J,K)$ arises from a triple of dimer configurations.  Comparing with the definition of weight of $G$ we have
$$
\Delta_I(N) \Delta_J(N) \Delta_K(N) = \sum_W a(I,J,K;W) \sum_{G: W(G) = W} \wt(G).
$$
Note that if a cycle $C_i$ of even length in $G$ is labeled by $I$ in $W$, it comes from two different triples of dimer configurations.  Using Lemma \ref{lem:pyl}, we have
$$
\Delta_I(N) \Delta_J(N) \Delta_K(N) = \sum_D a(I,J,K;D) F_D(N).
$$
Now both sides depend only on the point $M(N) \in \Gr(k,n)$, so we have an identity on the Grassmannian.
\end{proof}

For example, the three dimer configurations
\begin{center}
\begin{tikzpicture}
\coordinate (b) at (45:1);
\coordinate (w) at (135:1);
\coordinate (b1) at (225:1);
\coordinate (w1) at (315:1);
\draw (0,0) circle (2);
\draw (w) -- (b) -- (w1) -- (b1) -- (w);
\draw (w) --  (135:2);
\draw (b) -- (45:2);
\draw[very thick, blue] (w1) --  (315:2);
\draw[very thick, blue] (b1) --  (225:2);
\draw[very thick, blue] (w) -- (b);
\filldraw [white] (w) circle (0.1cm);
\filldraw [black] (b) circle (0.1cm);
\draw (w) circle (0.1cm);
\filldraw [white] (w1) circle (0.1cm);
\filldraw [black] (b1) circle (0.1cm);
\draw (w1) circle (0.1cm);
\begin{scope}[shift={(5,0)}]
\coordinate (b) at (45:1);
\coordinate (w) at (135:1);
\coordinate (b1) at (225:1);
\coordinate (w1) at (315:1);
\draw (0,0) circle (2);
\draw (w) -- (b) -- (w1) -- (b1) -- (w);
\draw[very thick, blue] (w) --  (135:2);
\draw[very thick, blue] (b) -- (45:2);
\draw (w1) --  (315:2);
\draw (b1) --  (225:2);
\draw[very thick, blue] (w1) -- (b1);
\filldraw [white] (w) circle (0.1cm);
\filldraw [black] (b) circle (0.1cm);
\draw (w) circle (0.1cm);
\filldraw [white] (w1) circle (0.1cm);
\filldraw [black] (b1) circle (0.1cm);
\draw (w1) circle (0.1cm);
\end{scope}

\begin{scope}[shift={(10,0)}]
\coordinate (b) at (45:1);
\coordinate (w) at (135:1);
\coordinate (b1) at (225:1);
\coordinate (w1) at (315:1);
\draw (0,0) circle (2);
\draw (w) -- (b) -- (w1) -- (b1) -- (w);
\draw (w) --  (135:2);
\draw (b) -- (45:2);
\draw (w1) --  (315:2);
\draw (b1) --  (225:2);
\draw[very thick, blue] (w) -- (b1);
\draw[very thick, blue] (w1) -- (b);
\filldraw [white] (w) circle (0.1cm);
\filldraw [black] (b) circle (0.1cm);
\draw (w) circle (0.1cm);
\filldraw [white] (w1) circle (0.1cm);
\filldraw [black] (b1) circle (0.1cm);
\draw (w1) circle (0.1cm);
\end{scope}
\end{tikzpicture}
\end{center}
gives the labeled (elliptic) web
\begin{center}
\begin{tikzpicture}
\coordinate (b) at (45:1);
\coordinate (w) at (135:1);
\coordinate (b1) at (225:1);
\coordinate (w1) at (315:1);
\draw (0,0) circle (2);
\draw (w) -- node[above]{$1$} (b) --node[right]{$3$} (w1) --node[below]{$2$} (b1) --node[left]{$3$} (w);
\draw (w) --node[above]{$2$}  (135:2);
\draw (b) --node[above]{$2$} (45:2);
\draw (w1) --node[below]{$1$}  (315:2);
\draw (b1) -- node[below]{$1$} (225:2);
\filldraw [white] (w) circle (0.1cm);
\filldraw [black] (b) circle (0.1cm);
\draw (w) circle (0.1cm);
\filldraw [white] (w1) circle (0.1cm);
\filldraw [black] (b1) circle (0.1cm);
\draw (w1) circle (0.1cm);
\end{tikzpicture}
\end{center}

\end{document}